\renewcommand{\phi}{\varphi}
\renewcommand{\epsilon}{\varepsilon}
\renewcommand{\kappa}{\varkappa}       
\renewcommand{\ge}{\geqslant}           
\def\PI{{\rm PI}}
\def \GK{\operatorname{GK}}
\def\Pid{\operatorname{Pid}}
\def\Var{\operatorname{Var}}
\def\Ht{\operatorname{Ht}}
\def\AC{\operatorname{AC}}
\def\Mat{\operatorname{Mat}}
\newcommand{\cM}{{\cal M}}
\newcommand{\gA}{{\mathfrak A}}
\newcommand{\gB}{{\mathfrak B}}
\newcommand{\FF}{\mathbb{F}}
\newtheorem{theorem}{Теорема}[section]
\newtheorem{corollary}{Следствие}[section]
\newtheorem{lemma}{Лемма}[section]
\newtheorem{proposition}{Предложение}[section]
\newtheorem{notation}{Обозначение}[section]
\newtheorem{remark}{Замечание}[section]
\newtheorem{construction}{Конструкция}[section]
\newtheorem{algorithm}{Алгоритм}[section]
\newtheorem{ques}{Вопрос}[section]
\newtheorem{question}{Вопрос}[section]
\newtheorem{problem}{Проблема}[section]
\theoremstyle{definition}
\newtheorem{definition}{Определение}[section]
\newtheorem{example}{\rm\bf Пример}[section]
\begin{document}

\renewcommand{\baselinestretch}{1.2} 

{\large

\rightline{УДК 512.5+512.64+519.1}

\vskip 6mm

\begin{center}
{ \large Оценки, связанные с теоремой Ширшова о высоте}

\end{center}

\vskip 11mm
\begin{center}
Харитонов Михаил Игоревич\footnote{Работа выполнена при частичной финансовой поддержке фонда Дмитрия Зимина ``Династия'' и фонда О.В. Дерипаски; исследование поддержано грантом РФФИ \No 14-01-00548.}
\end{center}

\vskip 15mm
\begin{flushright}
Работа посвящена 80-летию\\ профессора В.~Н.~Латышева
\end{flushright}

}

\tableofcontents

\newpage

\chapter{Проблемы Бернсайдовского типа и тождества в теории колец}

\section{Теория колец в контексте проблематики\\ Бернсайдовского типа}

Дальнейшее развитие теории периодических групп требует решения следующей проблемы, поставленной Бернсайдом:

\begin{problem}[\cite{Bur1902}]\index{Проблема!Бернсайда!для групп}
Будет ли конечной всякая периодическая конечно порожд\"{е}нная группа?
\end{problem}

Первоначальные усилия были направлены в сторону положительного решения проблемы, так как все известные частные случаи давали позитивный ответ. Например, если группа порождена $m$ элементами и порядок каждого е\"{е} элемента является делителем числа 4, она конечна. Первый контрпример к
``неограниченной'' проблеме был найден Е.~С.~Голодом в $1964$ году на основе
универсальной конструкции Голода--Шафаревича. Вопрос о
локальной конечности групп с тождеством $x^n=1$ был решен отрицательно
в знаменитых работах П.~С.~Новикова и С.~И.~Адяна $(1968)$: было доказано существование
для любого нечетного $n\ge 4381$ бесконечной группы с $m>1$ образующими, удовлетворяющей
тождеству $x^n=1$. Эта оценка была улучшена до $n\ge 665$ С.~И.~Адяном $(1975)$.
Позднее А.~Ю.~Ольшанский предложил геометрически наглядный вариант доказательства
для нечетных $n>10^{10}$.

Построения в полугруппах, как правило, проще, чем в группах. Например, вопрос
о существовании конечно порожд\"{е}нной ниль-полугруппы, то есть полугруппы, каждый
элемент которой в некоторой степени обращается в нуль, имеет тривиальный положительный
ответ: уже в алфавите из двух букв имеются слова сколь угодно большой длины, не
содержащие трех подряд одинаковых подслов (кубов). Этот факт был независимо доказан А.~Туэ (\cite{Th1906}) и М.~Морсом (\cite{Mor21}).

\begin{theorem}[Морс--Туэ] \index{Теорема!Морса--Туэ}
Пусть $X = \{a, b\}$, $X^*$ --- множество слов над алфавитом $X$, подстановка $\phi$ задана соотношениями $\phi(a)=ab$, $\phi(b)=ba$. Тогда если слово $w\in X^*$ --- бескубное, то и $\phi(w)$ --- бескубное.
\end{theorem}
В дальнейшем этот результат был усилен Туэ:
\begin{theorem}[Туэ--1, \cite{Th1906}]\index{Теорема!Туэ}
 Пусть $X = \{a, b, c\}$, $X^*$ --- множество слов над алфавитом $X$, подстановка $\phi$ задана соотношениями $\phi(a)=abcab$, $\phi(b)=acabcb$, $\phi(c)=acbcacb$. Тогда если слово $w\in X^*$ --- бесквадратное, то и $\phi(w)$ --- бесквадратное.
\end{theorem}
\begin{theorem}[Туэ--2]\index{Теорема!Туэ}
 Пусть $L$ и $N$ --- алфавиты, $N^*$ --- множество слов над алфавитом $N$, для подстановки $\phi:L\rightarrow N^*$ выполнены следующие условия:
\begin{enumerate}
\item если длина $w$ не больше 3, то $\phi(w)$ --- бесквадратное;
\item если $a$, $b$ --- буквы алфавита $L$, а $\phi(a)$ --- подслово $\phi(b)$, то $a=b$. 
\end{enumerate}
Тогда если слово $w\in L^*$ --- бесквадратное, то и $\phi(w)$ --- бесквадратное.
\end{theorem}
Полное алгоритмическое описание бесквадратных подстановок было впервые получено Дж.~Берстелем (\cite{Ber77_1, Ber77_2}). В дальнейшем Крошмором было предложено следующее описание:
\begin{theorem}[Крошмор, \cite{Cr82}] \index{Теорема!Крошмора}
 Пусть $L$ и $N$ --- алфавиты, $N^*$ --- множество слов над алфавитом $N$, $\phi:L\rightarrow N^*$ --- подстановка, $M$ --- наибольший размер образа буквы алфавита $L$ при подстановке $\phi$, $m$ --- наименьший размер образа буквы $L$ при той же подстановке, $k=max\{3, 1+[(M-3)/m]\}$. Тогда подстановка $\phi$ --- бесквадратная в том и только в том случае, когда для любого бесквадратного слова $w$ длины $\leqslant k$ слово $\phi(w)$ будет бесквадратным.
\end{theorem}

Обзор проблем Бернсайдовского типа в теории колец провед\"{е}н в монографии М.~Сапира (\cite{Sap14}) и статье А.~Я.~Белова (\cite{Bel07}).

Проблема Бернсайда для ассоциативных алгебр была  сформулирована А.~Г.~Курошем в тридцатых годах двадцатого века:

\begin{question}\index{Проблема!Бернсайда!для ассоциативных алгебр}
Пусть все 1-порожд\"{е}нные подалгебры конечно порожд\"{е}нной ассоциативной алгебры $A$ конечномерны. Будет ли $A$ конечномерна?
\end{question}

\begin{proposition}
Пусть $A$ --- ассоциативная $K$-алгебра, $K$ --- коммутативное кольцо, $a\in A$.  Подалгебра, порожд\"{е}нная $a$, конечномерна тогда и только тогда, когда $a$ --- алгебраический элемент.
\end{proposition}

Отрицательный ответ был получен Е.~С.~Голодом в 1964 году более сложным способом, чем  в случае полугрупп. Например, в полугрупповом случае найд\"{е}тся 3-порожд\"{е}нная бесконечная полугруппа, удовлетворяющая тождеству $x^2=0$. Для ассоциативных алгебр над полем характеристики $\geqslant 3$ это невозможно. 

\begin{definition}\index{Класс нильпотентности} \index{Ниль-индекс} \index{Индекс!нильпотентности}
{\em Классом нильпотентности}, {\em индексом нильпотентности} или {\em ниль-индексом} ассоциативной алгебры $A$ называется минимальное натуральное число $n$ такое, что $A^n=0$.
\end{definition}

\begin{theorem}[Курош, \cite{Kur41}]
Любая удовлетворяющая тождеству $x^2=0$ алгебра над полем характеристики $\geqslant 3$ или 0 является нильпотентной класса 3. Любая нильпотентная конечно порожд\"{е}нная алгебра конечномерна.
\end{theorem}

\begin{definition} \index{Индекс!алгебры}
{\em Индексом} алгебраической алгебры $A$ называется супремум степеней минимальных аннулирующих многочленов элементов $A$.
\end{definition}

В 1941 году Курош в работе \cite{Kur41} сформулировал проблему Бернсайда  алгебр конечного индекса:

\begin{question}\index{Проблема!Куроша} 
\begin{enumerate}
	\item Верно ли, что конечно порожд\"{е}нная ниль-алгебра конечного ниль-индекса нильпотента? 
	\item Верно ли, что конечно порожд\"{е}нная алгебра конечного индекса конечномерна?
\end{enumerate}
\end{question}

В 1946 году  И.~Капланский \cite{Kap46} и Д.~Левицкий \cite{Lev46} ответили на эти вопросы положительным образом для алгебр с допустимым полиномиальным тождеством, где полиномиальное тождество называется {\em допустимым}, если один из его коэффициентов не равен 1. Заметим, что в случае ассоциативных алгебр над полями любое полиномиальное тождество является допустимым.

В 1948 году И.~Капланский отказался от условия конечности индекса:

\begin{theorem}[Капланский, \cite{Kap48}]\label{essay:kap} \index{Теорема!Капланского}
Любая конечно порожд\"{е}нная алгебраическая алгебра над коммутаитвным кольцом, удовлетворяющая допустимому полиномиальному тождеству, конечномерна.
\end{theorem}

 Доказательства в работах \cite{Kap46} и \cite{Lev46} были проведены структурными методами.  Заметим, что структурная теория, развитая в работах Ш.~Амицура, И.~Капланского и др.,
позволила решить ряд классических проблем и служит основой для дальнейших
исследований. Обычная схема структурных рассуждений состояла в исследовании полупростой части (матриц над телами) и редукции к полупростой ситуации
пут\"{е}м факторизации по радикалу. Несмотря на свою эффективность, рассуждения такого рода не являются
конструктивными. Кроме того, доказательства, которые получаются с помощью
структурной теории, не дают понимания происходящего ``на микроуровне'', т. е.
на уровне слов и соотношений между ними.

В 1958 году А.~И.~Ширшов доказал свою знаменитую теорему о высоте чисто комбинаторными методами \cite{Sh53, Sh54, Sh57_1, Sh57_2, Sh58, Sh62(1), Sh62(2), BBL97}. Из теоремы Ширшова о высоте следует решение проблемы Куроша для ассоциативных $\PI$-алгебр в усиленной форме, т.е. требование алгебраичности алгебры заменяется на требование алгебраичности слов от порождающих длины менее степени тождества в алгебре, а требование конечности отбрасывается. Таким образом, результат А.~И.~Ширшова есть улучшение теоремы \ref{essay:kap}.

\begin{theorem}[Ширшов, \cite{Sh57_1, Sh57_2}]\label{essay:shirshov}\index{Теорема!Ширшова о высоте}
Пусть $A=\langle X\rangle$ --- конечно порожд\"{е}нная ассоциативная алгебра над коммутативным кольцом, удовлетворяющая допустимому полиномиальному тождеству степени $n$. Тогда найд\"{е}тся число $H$, зависящее только от $|X|$ и $n$ такое, что каждый элемент $a\in A$ может быть представлен как линейная комбинация слов вида $v_1^{n_1}\cdots v_h^{n_h}$, где $h\leqslant H$, а длина каждого слова $v_i$ меньше $n$.
\end{theorem}

Если в ассоциативной алгебре есть допустимое полиномиальное тождество, то есть и допустимое полилинейное тождество той же или меньшей степени. Доказательство этого факта можно найти в обзоре \cite{SGS78}. 

Пусть $f=0$ --- допустимое полилинейное тождество степени $n$. Тогда каждый моном $f$ является произведением переменных в некотором порядке (каждая переменная встречается в точности один раз в каждом мономе). Таким образом, все мономы $f$ получаются из $x_1 x_2\cdots x_n$ пут\"{е}м перестановки переменных. Следовательно, каждое допустимое полилинейное тождество имеет форму 

$$\sum\limits_{\sigma\in S_n}\alpha_\sigma x_{\sigma(1)} x_{\sigma(2)} x_{\sigma(n)},$$

где $S_n$ --- группа всех перестановок множества $\{1,\dots , n\}$, а один из коэффициентов $\alpha_\sigma$ равен 1. 

Это тождество после переименования переменных может быть представлено в форме 

$$x_1 x_2\dots x_n = \sum\limits_{\sigma\in S_n\setminus\{1\}}\beta_\sigma x_{\sigma(1)} x_{\sigma(2)} x_{\sigma(n)}$$

Таким образом, каждое произведение $u_1 u_2\cdots u_n$ элементов алгебры $A$ есть линейная комбинация перестановок этого произведения. Пусть на словах из $X^*$ задан некоторый частичный порядок. Рассмотрим подмножесво, получающихся из $X^*$ выбрасыванием слов, представимых в виде линейной комбинации меньших слов. Получаем, что теорему \ref{essay:shirshov} можно доказывать только для этого подмножества. Таким образом доказательство теоремы \ref{essay:shirshov} сводится к чистой комбинаторике.

\medskip
Заметим, что верно и утверждение, обратное теоремам \ref{essay:kap} и \ref{essay:shirshov}.

Известно, что в каждой ассоциативной алгебре размерности $n$ над коммутативным кольцом выполняется так называемое {\em стандартное тождество} \index{Тождество!стандартное} $S_{n+1}(x_1,\dots, x_{n+1})=0$, где $S_n(x_1,\dots, x_{n}) = \sum\limits_{\sigma\in S_n}(-1)^\sigma  x_{\sigma(1)} x_{\sigma(2)} x_{\sigma(n)}$, $S_n$ --- группа перестановок. 

Отсюда получаем следующее предложение:

\begin{proposition}
Любая конечномерная алгебра является конечно порожд\"{е}нной, алгебраичной и удовлетворяет допустимому полилинейному тождеству.
\end{proposition}

\begin{definition} \index{Размерность Гельфанда--Кириллова}
$\GK(A)$ --- {\em размерность Гельфанда--Кириллова алгебры $A$} --- определяется по правилу
$$\GK(A)=\lim_{n\to\infty}{\ln V_A(n)\over\ln(n)},$$
где $V_A(n)$ есть\index{Функция роста алгебры} {\it функция роста алгебры $A$}, т.е.
размерность векторного пространства, порожд\"{е}нного словами степени
не выше $n$ от образующих $A$.
\end{definition}

\begin{corollary}[Berele]
Пусть $A$~--- конечно порожд\"{е}нная $\PI$-алгебра.\linebreak
Тогда $\GK(A)<\infty.$
\end{corollary}


\begin{notation}
Обозначим как $\deg(A)$ {\em степень алгебры}, т.е.
минимальную степень тождества, которое в ней выполняется.
Через $\Pid(A)$ обозначим {\em сложность} алгебры $A$, т.е. максимальное $k$
такое, что ${\Bbb M}_k$ --- алгебра матриц размера $k$ --- принадлежит
многообразию $\Var(A)$, порожд\"{е}нному алгеброй $A$.
\end{notation}

Введ\"{е}м понятие высоты, частный случай которого использовался в теореме \ref{essay:shirshov}.

\begin{definition} \index{Высота!множества}
Назов\"{е}м множество $\cM\subset X^*$ множеством  {\em ограниченной высоты
$h=\Ht_Y(A)$ над множеством слов $Y= \{ u_1, u_2,\ldots\}$}, если $h$ --- минимальное число такое, что любое слово $u\in \cM$ либо $n$-разбиваемо, либо представимо в виде $u =  u_{j_1}^{k_1} u_{j_2}^{k_2}\cdots u_{j_r}^{k_r}$, где $r\leqslant h$.
\end{definition}

\begin{definition} \index{Высота!алгебры} 
Назов\"{е}м \PI-алгебру $A$ алгеброй {\em ограниченной высоты
$h=\Ht_Y(A)$ над множеством слов $Y = \{ u_1, u_2,\ldots\}$}, если
$h$ --- минимальное число такое, что любое слово $x$ из $A$ можно
представить в виде
$$x = \sum_i \alpha_i u_{j_{(i,1)}}^{k_{(i,1)}}
u_{j_{(i, 2)}}^{k_{(i,2)}}\cdots
u_{j_{(i,r_i)}}^{k_{(i,r_i)}},
$$
причем $\{r_i\}$ не превосходят $h$. Множество \index{Базис!Ширшова}
$Y$ называется {\em базисом Ширшова} или {\em $s$-базисом} для алгебры $A$.
\end{definition}

Вместо понятия {\it высоты} иногда удобнее пользоваться близким понятием
{\it существенной высоты}.

\begin{definition} \index{Высота!существенная}
Алгебра $A$ имеет {\em существенную высоту $h=H_{Ess}(A)$} над
конечным множеством $Y$, называемым {\em $s$-базисом алгебры $A$}, если можно
выбрать такое конечное множество $D\subset A$, что $A$ линейно
представима элементами вида $t_1\cdot\ldots\cdot t_l$, где $l\leqslant
2h+1$, и $\forall i (t_i\!\in\! D \vee t_i=y_i^{k_i};y_i\in Y)$,
причем множество таких $i$, что $t_i\not\in D$, содержит не более
$h$ элементов. Аналогично определяется {\em существенная высота}
множества слов.
\end{definition}

Говоря неформально, любое длинное слово есть произведение
периодических частей и ``прокладок'' ограниченной длины.
Существенная высота есть число таких периодических кусков, а
обычная еще учитывает ``прокладки''.

В связи с теоремой о высоте возникли следующие вопросы:

\begin{enumerate}

\item На какие классы колец можно распространить теорему о высоте?

\item Над какими $Y$ алгебра $A$ имеет ограниченную высоту? В
частности, какие наборы слов можно взять в качестве $\{v_i\}$?

\item Как устроен вектор степеней $(k_1,\ldots,k_h)$? Прежде
всего: какие множества компонент этого вектора являются
существенными, т.е. какие наборы $k_i$ могут быть одновременно
неограниченными? Какова существенная высота? Верно ли, что
множество векторов степеней обладает теми или иными свойствами
регулярности?

\item Как оценить высоту?
\end{enumerate}

Перейдем к обсуждению поставленных вопросов.

\section{Неассоциативные обобщения}

 Теорема о высоте была
распространена на некоторые классы колец, близких к ассоциативным.
С.~В.~Пчелинцев \cite{Pchelintcev} доказал ее для альтернативного
и $(-1,1)$ случаев, С.~П.~Мищенко \cite{Mishenko1} получил аналог
теоремы о высоте для алгебр Ли с разреженным тождеством. В работе
автора \cite{Belov1} теорема о высоте была доказана для некоторого
класса колец, асимптотически близких к ассоциативным, куда
входят, в частности, альтернативные и йордановы $\PI$-алгебры.

\section{Базисы Ширшова} \index{Базис!Ширшова}

\begin{theorem}  [А.~Я.~Белов]         \label{ThKurHmg}
а) Пусть $A$~--- градуированная $\PI$-алгебра, $Y$~--- конечное
множество однородных элементов. Тогда если при всех $n$ алгебра
$A/Y^{(n)}$ нильпотентна, то $Y$ есть $s$-базис $A$. Если при этом
$Y$ порождает $A$ как алгебру, то $Y$~--- базис Ширшова алгебры
$A$.

б) Пусть $A$~--- $\PI$-алгебра, $M\subseteq A$~--- некоторое курошево
подмножество в $A$. Тогда $M$~--- $s$-базис алгебры $A$.
\end{theorem}

$Y^{(n)}$ обозначает идеал, порожд\"{е}нный $n$-ыми степенями элементов
из $Y$. Множество $M\subset A$ называется {\it курошевым}, если
любая проекция $\pi\colon A\otimes K[X]\to A'$, в которой образ
$\pi(M)$ цел над $\pi(K[X])$, конечномерна над $\pi(K[X])$.
Мотивировкой этого понятия служит следующий пример.  Пусть
$A={\Bbb Q}[x,1/x]$. Любая проекция $\pi$ такая, что $\pi(x)$
алгебраичен, имеет конечномерный образ. Однако множество $\{x\}$
не является $s$-ба\-зис\-ом алгебры ${\Bbb Q}[x,1/x]$. Таким
образом, ограниченность существенной высоты есть некоммутативное
обобщение свойства {\it целости}.

Описание базисов\index{Базис!Ширшова!состоящий из слов}
Ширшова, состоящих из слов, заключено в следующей теореме:

\begin{theorem}[\cite{BBL97, BR05}]            \label{ThBelheight}
Множество слов $Y$ является базисом Ширшова алгебры $A$ тогда и
только тогда, когда для любого слова $u$ длины не выше $m =
\Pid(A)$~--- сложности алгебры $A$~--- множество $Y$ содержит
слово, циклически сопряженное к некоторой степени слова $u$.
\end{theorem}

Аналогичный результат был независимо получен Г.~П.~Чекану и В.~Дренски. Вопросы, связанные с локальной конечностью алгебр, с алгебраическими множествами слов степени не выше сложности алгебры, исследовались в работах \cite{Ufn90, Cio97, Cio88, CK93, Che94, Ufn85, UC85}. В этих же работах обсуждались вопросы, связанные с обобщением теоремы о независимости.

\section{Существенная высота} \index{Высота!существенная}
Ясно, что размерность
Гель\-фан\-да--Ки\-рил\-ло\-ва оценивается существенной высотой и
что $s$-базис является базисом Ширшова тогда и только тогда,
когда он порождает $A$ как алгебру. В представимом случае имеет
место и обратное утверждение.

\begin{theorem}[А.~Я.~Белов, \cite{BBL97}]
Пусть $A$~--- конечно порожд\"{е}нная представимая алгебра и пусть
$H_{Ess}{}_Y(A)<\infty$. Тогда $H_{Ess}{}_Y(A)=\GK(A)$.
\end{theorem}

\begin{corollary}[В.~Т.~Марков]
Размерность Гель\-фан\-да--Ки\-рил\-ло\-ва ко\-неч\-но
по\-рож\-ден\-ной представимой алгебры есть целое число.
\end{corollary}

\begin{corollary}
Если $H_{Ess}{}_Y(A)<\infty$ и алгебра $A$ представима, то
$H_{Ess}{}_Y(A)$ не зависит от выбора $s$-базиса $Y$.
\end{corollary}

В этом случае размерность Гель\-фан\-да--Ки\-рил\-ло\-ва также
равна существенной высоте в силу локальной представимости относительно свободных алгебр.

\section{Строение векторов степеней} Хотя в представимом случае
размерность Гель\-фан\-да--Ки\-рил\-ло\-ва и существенная высота
ведут себя хорошо, тем не менее даже тогда множество векторов
степеней может быть устроено плохо~--- а именно, может быть
дополнением к множеству решений системы
экс\-по\-нен\-ци\-аль\-но-по\-ли\-но\-ми\-аль\-ных диофантовых
уравнений \cite{BBL97}. Вот почему существует пример представимой
алгебры с трансцендентным рядом Гильберта. Однако для относительно
свободной алгебры ряд Гильберта рационален \cite{Belov501}.

\section{$n$-разбиваемость, обструкции и теорема Дилуорса} \index{Слово!$n$-разбиваемое}\index{Теорема!Дилуорса}

Значение понятия {\it
$n$-раз\-би\-ва\-ем\-ос\-ти} выходит за рамки проблематики,
относящейся к проблемам бернсайдовского типа. Оно играет роль и
при изучении полилинейных слов, в оценке их количества, где {\it
полилинейным} называется слово, в которое каждая буква входит
не более одного раз. В.~Н.~Латышев применил теорему Дилуорса для
получения оценки числа не являющихся $m$-разбиваемыми полилинейных слов степени $n$ над
алфавитом $\{a_1,\dots,a_n\}$ (см. \cite{Lat72}). Эта
оценка:  ${(m - 1)}^{2n}$ и она близка к реальности. Напомним эту
теорему.

\begin{theorem}[Дилуорс, \cite{Dil50}]
Пусть $n$ --- наибольшее количество
элементов антицепи данного конечного частично упорядоченного
множества $M$. Тогда $M$ можно разбить на $n$  попарно
непересекающихся цепей.
\end{theorem}
Рассмотрим полилинейное слово $W$ из $n$ букв. \index{Слово!полилинейное}
Положим $a_i\succ
a_j$, если $i>j$ и буква $a_i$ стоит в слове $W$ правее $a_j$.
Условие не $m$-разбиваемости означает отсутствие антицепи из $m$
элементов. Тогда по теореме Дилуорса все позиции (и,
соответственно, буквы $a_i$) разбиваются на $(m-1)$ цепь. Сопоставим
каждой цепи свой цвет. Тогда раскраска позиций и раскраска букв
однозначно определяет слово $W$. А число таких раскрасок не
превосходит $(m-1)^n\times (m-1)^n=(m-1)^{2n}$. Улучшение этой оценки и другие вопросы, связанные с полилинейными словами, рассматриваются в главе \ref{ch:pu_number}.

В.~Н.~Латышев (\cite{Lat72}) с помощью привед\"{е}нных оценок пров\"{е}л 
прозрачное доказательство теоремы Регева:
\begin{theorem}[Регев, \cite{Reg71}] \index{Теорема!Регева}
Если алгебры $A$ и $B$ удовлетворяют полиномиальному тождеству, то алгебра  $A\otimes_F B$ также удовлетворяет полиномиальному тождеству. 
\end{theorem}

Вопросы, связанные с перечислением полилинейных слов, не
являющихся $n$-раз\-би\-ва\-е\-мы\-ми, имеют самостоятельный
интерес. (Например, существует биекция между не
$3$-раз\-би\-ва\-е\-мы\-ми словами и числами Каталана.) С одной
стороны, это чисто комбинаторная задача, с другой стороны, она
связана с рядом коразмерностей для алгебры общих матриц.
Исследование полилинейных слов представляется чрезвычайно важным.

В 1950 году Шпехт (см. \cite{Sp50}) поставил проблему существования бесконечно базируемого многообразия ассоциативных алгебр над полем характеристики 0. Решение проблемы Шпехта для нематричного случая представлено в докторской диссертации В. Н. Латышева \cite{Lat77}. Рассуждения В. Н. Латышева основывались на применении техники частично упорядоченных множеств. А. Р. Кемер (см. \cite{Kem87} доказал, что каждое многообразие ассоциативных алгебр конечно базируемо, тем самым решив проблему Шпехта.

Первые примеры бесконечно базируемых ассоциативных колец были получены А. Я. Беловым (\cite{Bel99}), А. В. Гришиным (\cite{Gr99}) и В. В. Щиголевым (\cite{Shch99}).

Введ\"{е}м теперь некоторый порядок на словах алгебры над полем. Назов\"{е}м {\it обструкцией} полилинейное слово, которое
\begin{itemize}
    \item является уменьшаемым (т. е. является комбинацией меньших слов);
    \item не имеет уменьшаемых подслов;
    \item не является изотонным образом уменьшаемого слова меньшей длины.
\end{itemize}

В.~Н.~Латышев  (\cite{LatyshevMulty}) поставил
проблему конечной базируемости множества старших полилинейных слов
для $T$-идеала относительно взятия надслов и изотонных
подстановок.

\begin{ques}[Латышев]
Верно ли, что количество обструкций для полилинейного $T$-идеала конечно?
\end{ques}

Из проблемы Латышева вытекает полилинейный случай проблемы конечной базируемости для алгебр над полем конечной характеристики. Наиболее важной обструкцией является обструкция $x_n x_{ n-1}\dots x_1$, е\"{е} изотонные образы составляют множество не $n$-разбиваемых слов. 

В связи с этими вопросами возникает проблема:

\begin{ques}
Перечислить количество полилинейных слов, отвечающих данному конечному набору обструкций. Доказать элементарность соответствующей производящей функции.
\end{ques}
Также имеется тесная связь с проблемой слабой
н\"{е}теровости групповой алгебры бесконечной финитарной
симметрической группы над полем положительной характеристики (для
нулевой характеристики это было установлено А.~Залесским). Для
решения проблемы Латышева надо уметь переводить свойства
$T$-идеалов на язык полилинейных слов. В работах
\cite{BBL97,Belov1} была предпринята попытка осуществить программу перевода
структурных свойств алгебр на язык комбинаторики слов. На язык
полилинейных слов такой перевод осуществить проще, в дальнейшем
можно получить информацию и о словах общего вида.

В работе техника В.~Н.~Латышева переносится на
неполилинейный случай, что позволяет получить субэкспоненциальную
оценку в теореме Ширшова о высоте. Г.~Р.~Челноков предложил идею этого переноса в 1996 году.

\section{Оценки высоты}

Первоначальное доказательство А.~И.~Ширшова
хотя и было чисто комбинаторным (оно основывалось на технике
элиминации, развитой им в алгебрах Ли, в частности, в
доказательстве теоремы о свободе), однако оно давало только
упрощ\"{е}нные рекурсивные оценки. Позднее А.~Т.~Колотов
\cite{Kolotov} получил оценку на $\Ht(A)\leqslant l^{l^n}$\
(Здесь и далее: $n=\deg(A)$,\, $l$~--- число образующих). А.~Я.~Белов в работе \cite{Bel92} показал, что $\Ht(n,l)<2nl^{n+1}$.  Экспоненциальная оценка теоремы Ширшова о высоте изложена также в работах \cite{BR05,Dr00,Kh11(2)}. Данные оценки улучшались в работах А.~Клейна \cite{Klein,Klein1}. В 2001 году Е.~С.~Чибриков в работе \cite{Ch01} доказал, что $\Ht(4,l) \geqslant  (7k^2-2k).$  Верхние и нижние оценки на структуру кусочной периодичности, полученные М.~И.~Харитоновым в работах \cite{Kh11, Kh11(2)}, изложены в главе \ref{ch:pp}. 

В работе \cite{LS13} рассматривается связь между высотой конечных и бесконечных полей одинаковой характеристики. Пусть $A$ --- ассоциативная алгебра надо конечным полем $\FF$ из $q$ элементов и тождеством степени $n$. Тогда доказано, что если $q\geqslant n$, то индекс нильпотентности алгебры будем таким же, что и в случае бесконечного поля. Если же $q=n-1$, то индексы нильпотентности в случае поля $\FF$ и бесконечного поля отличается не более, чем на~1.

В 2011 году А.~А.~Лопатин \cite{Lop11} получил следующий результат:

\begin{theorem}
Пусть $C_{n,l}$ --- степень нильпотентности свободной $l$-порожд\"{е}нной алгебры и удовлетворяющей тождеству $x^n=0.$ Пусть $p$ --- характеристика базового поля алгебры --- больше чем $n/2.$ Тогда
$$(1): C_{n,l}<4\cdot 2^{n/2} l.$$
\end{theorem}
По определению $C_{n,l}\leqslant \Psi(n, n, l).$

Заметим, что для малых $n$ оценка  (1) меньше, чем полученная в данной работе оценка $\Psi(n, n, l),$ но при росте $n$ оценка $\Psi(n,n,l)$ асимптотически лучше оценки (1).

Е.~И.~Зельманов поставил следующий вопрос в Днестровской тетради \cite{Dnestrovsk} в 1993 году:
\begin{ques}
Пусть $F_{2,m}$ --- свободное $2$-порожд\"{е}нное ассоциативное кольцо с тождеством $x^m=0.$ Верно ли, что класс нильпотентности кольца $F_{2,m}$ раст\"{е}т экспоненциально по $m?$
\end{ques}

В работе получен следующий ответ на вопрос Е.~И.~Зельманова: в действительности искомый класс нильпотентности раст\"{е}т субэкспоненциально.

\section{Основные результаты}

В работе получен ответ на вопрос \ref{ques} Е.~И.~Зельманова: в действительности искомый класс нильпотентности раст\"{е}т субэкспоненциально.

\begin{theorem}     \label{c:main2}
Высота множества не $n$-разбиваемых слов над $l$-буквенным
алфавитом относительно множества слов длины меньше $n$ не
превышает $\Phi(n,l)$, где
$$\Phi(n,l) = 2^{96} l\cdot n^{12\log_3 n + 36\log_3\log_3 n + 91}.$$
\end{theorem}
Из данной теоремы путем некоторого огрубления и упрощения оценки получается, что
при фиксированном $l$ и $n \rightarrow\infty$
$$\Phi(n,l) = n^{12(1+o(1))\log_3{n}},$$

а при фиксированном $n$ и $l\rightarrow\infty$
$$\Phi(n,l) < C(n)l.$$

Также доказательство этих результатов содержится в работе \cite{BK}.

\begin{corollary}
Высота $l$-порожд\"{е}нной $\PI$-алгебры с допустимым полиномиальным
тождеством степени $n$ над множеством слов длины меньше $n$ не
превышает $\Phi(n,l)$.
\end{corollary}



Как следствие получаются субэкспоненциальные оценки на индекс
нильпотентности $l$-порожд\"{е}нных ниль-алгебр степени $n$ для
произвольной характеристики.

Другим основным результатом диссертации является следующая теорема:

\begin{theorem}      \label{c:main1}
Пусть $l$, $n$ и $d\geqslant n$ --- некоторые натуральные числа. Тогда все
$l$-порожд\"{е}нные слова длины не меньше, чем $\Psi(n,d,l)$, либо
содержат $x^d$, либо являются $n$-разбиваемыми, где
$$
\Psi(n,d,l)=2^{27} l (nd)^{3 \log_3 (nd)+9\log_3\log_3 (nd)+36}.
$$
\end{theorem}
Из данной теоремы путем некоторого огрубления и упрощения оценки получается, что
при фиксированном $l$ и $nd \rightarrow\infty$
$$\Psi(n,d,l) = (nd)^{3(1+o(1))\log_3(nd)},$$

а при фиксированных $n, d$ и $l\rightarrow\infty$
$$\Psi(n,d,l) < C(n,d)l.$$

\begin{corollary}
 Пусть $l$, $d$ --- некоторые натуральные числа.
Пусть в ассоциативной  $l$-порожд\"{е}нной алгебре $A$ выполнено тождество
$x^{d}=0$. Тогда ее индекс нильпотентности меньше, чем
$\Psi(d,d,l)$.
\end{corollary}

Кроме того, доказывается субэкспоненциальная оценка, которая лучше при малых $n$ и $d$: 

\begin{theorem}     \label{t2:log_2} Пусть $l$, $n$ и $d\geqslant n$ --- некоторые натуральные числа. Тогда все $l$-порожд\"{е}нные слова длины не меньше, чем $\Psi(n,d,l)$, либо содержат $x^d$, либо являются $n$-разбиваемыми, где $$\Psi(n,d,l) = 256 l(nd)^{2\log_2 (nd)+10}d^2.$$ \end{theorem}

\begin{notation}
Для вещественного числа $x$ положим $\ulcorner x\urcorner := -[-x].$ Таким образом мы округляем нецелые числа в большую сторону.
\end{notation}
В процессе доказательства теоремы \ref{c:main2} доказывается следующая теорема, оценивающая существенную высоту:

\begin{theorem} \label{ThThick}
Существенная высота $l$-порожд\"{е}нной $PI$-алгебры с допустимым полиномиальным тождеством степени $n$ над множеством слов длины меньше $n$ меньше, чем $\Upsilon (n, l),$ где
$$\Upsilon (n, l) = 2n^{3\ulcorner\log_3 n\urcorner + 4} l.$$
\end{theorem}

\section{О нижних оценках}

Сравним полученные результаты  с нижней
оценкой для высоты. Высота алгебры $A$ не меньше ее размерности
Гель\-фан\-да--Ки\-рил\-ло\-ва $\GK(A)$. Для алгебры
$l$-порожд\"{е}нных общих матриц порядка $n$ данная размерность, равна
$(l-1)n^2+1$ (см. \cite{Procesi, Bel04}). 

В то же время Амицур и Левицкий в 1950 году доказали следующий факт:

\begin{theorem}[Амицур--Левицкий, \cite{AL50}] \index{Тождество!стандартное}\index{Теорема!Амицура--Левицкого}
Пусть $S_n$ --- группа перестановок. Определим {\em стандартное тождество} $S_n(x_1,\dots, x_{n}) = 0$ следующим образом: $S_n(x_1,\dots, x_{n}) := \sum\limits_{\sigma\in S_n}(-1)^\sigma  x_{\sigma(1)} x_{\sigma(2)} x_{\sigma(n)}$.
Тогда для любого натурального числа $d$ и любого коммутативного кольца $\FF$ в матричной алгебре $\Mat_d(\FF)$ выполняется стандартное тождество $S_{2d}=0$.
\end{theorem}

Минимальная степень тождества алгебры
$l$-порожд\"{е}нных общих матриц порядка $n$ равна $2n$ согласно теореме Ами\-цу\-ра-ыЛе\-виц\-ко\-го. Имеет место следующее:

\begin{proposition}
Высота $l$-порожд\"{е}нной $\PI$-алгебры степени $n$, а также
множества  не $n$-раз\-би\-ва\-ем\-ых слов над $l$-бук\-вен\-ным
алфавитом, не менее, чем $(l-1)n^2/4+1$.
\end{proposition}

Другие примеры использования теоремы Ами\-цу\-ра--Ле\-виц\-ко\-го можно найти в работе \cite{FI13}

Нижние оценки на индекс нильпотентности были установлены
Е.~Н.~Кузьминым в работе \cite{Kuz75}.  Е.~Н.~Кузьмин привел
пример $2$-порожд\"{е}нной алгебры с тождеством $x^n=0$, индекс
нильпотентности которой строго больше $(n^2+n-2)/2$. Вопрос нахождения нижних оценок рассматривается в главе \ref{ch:pu_number} (см. также \cite{Kh11}).

В то же время для случая нулевой характеристики и счетного числа
образующих Ю.~П.~Размыслов (см. например, \cite{Razmyslov3})
получил верхнюю оценку на индекс нильпотентности, равную $n^2$.

\newpage

\chapter{Оценки индекса нильпотентности конечно порожд\" {е}нных алгебр с ниль-тождеством}\label{ch:nil}
\section{Оценки на появление степеней подслов}
\subsection{План доказательства субэкспоненциальности индекса нильпотентности}

В леммах \ref{Lm0.1}, \ref{c:lem1.2} и \ref{c:lem1.3} описываются достаточные условия для присутствия периода длины $d$ в не $n$-разбиваемом слове $W$. В лемме \ref{c:lem1.4} связываются понятия $n$-разбиваемости слова $W$ и множества его хвостов. После этого определ\"{е}нным образом выбирается подмножество множества хвостов слова $W$, для которого можно применить теорему Дилуорса. Затем мы раскрашиваем хвосты и их первые буквы в соответствии с принадлежностью к цепям, полученным при применении теоремы Дилуорса.

Необходимо изучить, в какой позиции начинают отличаться соседние хвосты в каждой цепи. Вызывает интерес, с какой $``$частотой$"$ эта позиция попадает в $p$-хвост для некоторого $p\leqslant n$. Потом мы несколько обобщаем наши рассуждения, деля хвосты на сегменты по несколько букв, а затем рассматривая, в какой сегмент попадает позиция, в которой начинают отличаться друг от друга соседние хвосты в цепи.
В лемме \ref{c:lem2} связываются рассматриваемые $``$частоты$"$ для $p$-хвостов и $kp$-хвостов для $k = 3$.

В завершение доказательства строится иерархическая структура на основе применения леммы \ref{c:lem2}, т.~е. рассматриваем сначала сегменты $n$-хвостов, потом подсегменты этих сегментов и т.~д. Далее рассматривается наибольшее возможное количество хвостов из подмножества, для которого была применена теорема Дилуорса, после чего оценивается сверху общее количество хвостов, а значит, и букв слова $W$.

\subsection{Свойства периодичности и $n$-разбиваемости}\label{s:nil:basic_properties}

\smallskip


Пусть $\{a_1, a_2,\ldots ,a_l\}$ --- алфавит, над которым проводится построение слов. Порядок $a_1\prec a_2\prec\dots\prec a_l$ индуцирует
лексикографический порядок на словах над заданным алфавитом. Для удобства введ\"{е}м следующие определения:

\begin{definition}
а) Если в слове $v$ содержится подслово вида $u^t,$ то будем
говорить, что в слове $v$ содержится период длины $t.$

б) Если слово $u$ является началом слова $v$, то такие слова \index{Слова!несравнимые}
называют {\em несравнимыми} и обозначают $u\approx v$.

в) Слово $v$ --- {\em хвост} слова $u$, если найдется слово $w$ \index{Слова!хвост}
такое, что $u=wv$.

г) Слово $v$ --- {\em $k$-хвост} слова $u$, если $v$ состоит из \index{Слова!$k$-хвост}
$k$ первых букв некоторого хвоста $u$. Если в хвосте $u$ меньше $k$ букв, то считаем $v=u$.

г $'$) {\em $k$-начало} то же самое, что и $k$-хвост. \index{Слова!$k$-начало}

д) Пусть слово $u$ {\em левее} слова $v$, если начало слова $u$ левее начала слова $v$.
\end{definition}

\begin{notation}
а) Для вещественного числа $x$ положим $\ulcorner x\urcorner := -[-x].$

б) Обозначим как $|u|$ длину слова $u$.
\end{notation}

Для доказательства потребуются следующие достаточные условия наличия периода:

\begin{lemma}       \label{Lm0.1}
В слове $W$ длины $x$ либо первые $[x/d]$ хвостов попарно
сравнимы, либо в слове $W$ найдется период длины $d$.
\end{lemma}

\begin{proof}  Пусть в слове $W$ не нашлось слова вида $u^{d}$. Рассмотрим первые
$[x/{d}]$ хвостов. Предположим, что среди них нашлись 2
несравнимых хвоста $v_1$ и $v_2$. Пусть $v_1=u\cdot v_2$. Тогда
$v_2=u\cdot v_3$ для некоторого $v_3$. Тогда $v_1=u^2\cdot v_3$.
Применяя такие рассуждения, получим, что $v_1=u^{d}\cdot
v_{{d}+1}$, так как $|u|<x/ {d}$, $|v_2|\geqslant ({d}-1)x/ {d}$.
Противоречие.\end{proof}

\begin{lemma}     \label{c:lem1.2}
Если в слове $V$ длины $k\cdot t$ не больше $k$ различных подслов
длины $k$, то $V$ включает в себя период длины $t$.
\end{lemma}

\begin{proof} Докажем лемму индукцией по $k$. База при $k = 1$ очевидна. Если
находится не больше, чем $(k - 1)$ различных подслов длины $(k -
1)$, то применяем индукционное предположение. Если существуют $k$
различных подслов длины $(k - 1)$, то каждое подслово длины $k$
однозначно определяется своими первыми $(k - 1)$ буквами. Значит,
$V=v^t$, где $v$ --- $k$-хвост $V$.\end{proof}

\begin{definition} \index{Слово!$n$-разбиваемое!в обычном смысле}
а) Слово $W$ --- \textit{$n$-разбиваемо в обычном смысле}, если
найдутся $u_1, u_2,\ldots,u_n$ такие, что $W=v\cdot u_1\cdots
u_n$, при этом $u_1\succ\ldots \succ u_n$. Слова $u_1,
u_2,\dots, u_n$ назов\"{е}м {\em $n$-разбиением} слова $W$.

б) В текущем доказательстве слово $W$ будем называть \index{Слово!$n$-разбиваемое!в хвостовом смысле}
\textit{$n$-разбиваемым в хвостовом смысле}, если найдутся хвосты
$u_1,\ldots,u_n$ такие, что $u_1\succ u_2\succ \ldots \succ u_n$ и
для любого $i=1, 2,\ldots, n - 1$ начало $u_i$ слева от начала
$u_{i+1}$. Хвосты $u_1,
u_2,\dots, u_n$ назов\"{е}м {\em $n$-разбиением в хвостовом смысле} слова $W$. В части \ref{ch:nil}, если не оговорено противное, то под
\textit{$n$-разбиваемыми} словами мы подразумеваем $n$-разбиваемые \index{$n$-разбиение в хвостовом смысле}
в хвостовом смысле.

в) Слово $W$ --- \textit{$(n,d)$-сократимое}, если оно либо \index{Слово!$(n,d)$-сократимое}
$n$-разбиваемо в обычном смысле, либо находится слово вида $u^{d}\subseteq W$.
\end{definition}

Теперь опишем достаточное условие $(n,d)$-сократимости и его связь с $n$-разбиваемостью.

\begin{lemma}          \label{c:lem1.3}
Если в слове $W$ найдутся $n$ одинаковых непересекающихся подслов
$u$ длины $n\cdot{d}$, то $W$ --- $(n,d)$-сократимое.
\end{lemma}

\begin{proof} Предположим противное. Рассмотрим хвосты
$u_1,u_2,\ldots,u_n$ слова $u$, которые начинаются с каждой из его
первых $n$ букв. Перенумеруем хвосты так, чтобы выполнялись
неравенства: $u_1~\succ\ldots\succ~u_n.$ Из леммы \ref{Lm0.1} они
несравнимые. Рассмотрим подслово $u_1,$ лежащее в самом левом экземпляре слова $u,$ подслово $u_2$ --- во втором слева, $\dots$, $u_n$ --- в $n$-ом слева. Получили
$n$-разбиение слова $W$. Противоречие.\end{proof}

\begin{proposition}\label{pr:0}
Если для некоторых слов $u, v, w$ верно соотношение $|u|\leqslant |v| < |w|$ и, кроме того, $u\approx w, v\approx w$, то слова $u$ и $v$ несравнимы.
\end{proposition}

\begin{remark}
Если для некоторого действительного числа $a$ мы говорим про $a$-разбиваемость, то имеется ввиду $[a]$-разбиваемость.
\end{remark}

\begin{lemma}      \label{c:lem1.4}
Если слово $W$ является $[{3\over 2}(n + 1)d(\log_3{(nd)} + 2)]$-разбиваемым, то оно --
$(n,d)$-сократимое.
\end{lemma}

\begin{notation}
$p_{n,d}:=[{3\over 2}(n + 1)d(\log_3{(nd)} + 2)].$
\end{notation}

\begin{proof}
От противного. Пусть слово $W$ является $p_{n,d}$-разбиваемым, хвосты $u_1\prec u_2\prec\dots\prec u_{p_{n,d}}$ образуют $p_{n,d}-$разбиение, но слово $W$ --- не $(n,d)$-сократимое.

\begin{notation}
Пусть для $1\leqslant i < p_{n,d}$ слово $v_i$ --- подслово слова $W$, которое начинается первой буквой хвоста $u_i$ и заканчивается буквой, стоящей на одну позицию левее первой буквы хвоста $u_{i + 1}$. Также считаем, что $v_{p_{n,d}} = u_{p_{n,d}}$.
\end{notation}

 Предположим, что для любого числа $1\leqslant i\leqslant [{3\over 2}(n - 1)d(\log_3{(nd)} + 2)]$ найдутся числа $0\leqslant j_i\leqslant k_i < m_i\leqslant q_i < [{3}d(\log_3{(nd)} + 2)]$ такие, что $\prod\limits_{s = j_i+i}^{k_i+i} v_s\prec \prod\limits_{s = m_i+i}^{q_i+i} v_s$. Тогда рассмотрим последовательность чисел $i_s = {2s\over n}p_{n,d}+1$, где $0\leqslant s\leqslant [{n-1\over 2}]$. Тогда последовательность слов
$\prod\limits_{s = j_{i_0}+i_0}^{k_{i_0}+i_0} v_s\prec\prod\limits_{s = m_{i_0}+i_0}^{q_{i_0}+i_0} v_s\prec \prod\limits_{s = j_{i_1}+i_1}^{k_{i_1}+i_1} v_s\prec\prod\limits_{s = m_{i_1}+i_1}^{q_{i_1}+i_1} v_s\prec \prod\limits_{s = j_{i_2}+i_2}^{k_{i_2}+i_2} v_s\prec\dots\prec
\prod\limits_{s = j_{i_{[{n-1\over 2}]}}+i_{[{n-1\over 2}]}}^{k_{i_{[{n-1\over 2}]}}+i_{[{n-1\over 2}]}} v_s\prec\prod\limits_{s = m_{i_{[{n-1\over 2}]}}+i_{[{n-1\over 2}]}}^{q_{i_{[{n-1\over 2}]}}+i_{[{n-1\over 2}]}} v_s$ образует $2{[{n+1\over 2}]}-$разбиение в обычном смысле слова $W$.

Значит, слово $W$ --- $(n,d)$-сократимо. Противоречие.

Следовательно, найд\"{е}тся такое число $1\leqslant i\leqslant [{3\over 2}(n - 1)d(\log_3{(nd)} + 2)]$, что для любых $0\leqslant j\leqslant k< m\leqslant q < [{3}d(\log_3{(nd)} + 2)]$ имеем $\prod\limits_{s = j+i}^{k+i} v_s\approx\prod\limits_{s = m+i}^{q+i} v_s$.

Без ограничения общности $i = 1$. Для некоторого натурального $t$ рассмотрим некоторую последовательность натуральных чисел $\{k_i\}_{i=1}^{t}$ такую, что $k_1=3$ и $\prod\limits_{i=2}^{t}k_i>nd$. В силу леммы \ref{c:lem1.3} $\inf\limits_{0<j\leqslant k_1 d}|v_j|\leqslant nd$. Пусть $\inf\limits_{0<j\leqslant k_1 d}|v_j|$ достигается на $v_{j_1}$, где $0<j_1\leqslant k_1 d$ (если таких минимумов несколько, то бер\"{е}м самый правый из них).
\begin{itemize}
    \item Если $j_1\leqslant d$, то $d|v_{j_1}|-$начала хвостов $u_{j_1}$ и $u_{j_1 + 1}$ не пересекаются и несравнимы со словом $\prod\limits_{j=2d+1}^{3d}v_j$ и меньше его по длине. Следовательно, по предложению \ref{pr:0} $d|v_j|-$начала хвостов $u_{j_1}$  и $u_{j_1+1}$ несравнимы, а, значит, $v_{j_1}^d$ --- подслово слова $W$.
    \item Если $d<j_1\leqslant 2d,$ то $d|v_j|-$начала хвостов $u_{j_1}$  и $u_{j_1+1}$ не пересекаются со словом $v_{3d+1}$ и несравнимы со словом $\prod\limits_{j=1}^{d}v_j$. Кроме того, эти $d|v_j|-$начала меньше его по длине. Значит, по предположению \ref{pr:0}, $v_{j_1}^d$ --- подслово слова $W$.
    \item Если $2d<j_1\leqslant 3d$ и $d|v_j|-$начало хвоста $u_{j_1+1}$ не пересекается с $v_{[{3}d(\log_3{(nd)} + 2)]}$, то, аналогично предыдущему случаю, $v_{j_1}^d$ --- подслово слова $W$. Пусть $d|v_j|-$начало хвоста $u_{j_1+1}$ пересекается с $v_{[{3}d(\log_3{(nd)} + 2)]}$. Тогда для некоторого натурального числа $t$, которое будет выбрано позднее, рассмотрим последовательность натуральных чисел $\{k_j\}_{j=2}^t$, для которой $\prod\limits_{j=2}^t\geqslant nd$. Выберем $t$ так, что $\sum\limits_{j=2}^{t}\leqslant (\log_3{(nd)} + 1)$. Можно показать, что такое $t$ всегда существует. Заметим, что слово $\prod\limits_{j=k_1 d + 1}^{(k_1 + k_2)d}v_j$ содержится в $d|v_{j_1}|-$начале хвоста $u_{j_1} + 1$.
        Рассмотрим $v_{j_2}$ --- слово наименьшей длины среди слов $v_j$ при $k_1 d < j\leqslant (k_1 + k_2)d$. Если таких слов несколько, то в качестве наименьшего возьм\"{е}м самое правое из них. Тогда $|v_{j_2}|\leqslant [{|v_{j_1}|\over k_2}]$. Теперь рассмотрим $d|v_{j_2}|-$начало хвоста $u_{j_2 + 1}$. Если оно не пересекается со словом $v_{[{3}d(\log_3{(nd)} + 2)]}$, то $v_{j_2}^d$ --- подслово слова $W$. В противном случае слово $\prod\limits_{j=(k_1 + k_2) d + 1}^{(k_1 + k_2 + k_3)d}v_j$ является подсловом $d|v_{j_2}|-$начала хвоста $u_{j_2 + 1}$.
        
        Пусть для некоторого числа $i$ такого, что $2\leqslant i < t$ среди чисел $j$ таких, что $d\sum\limits_{s=1}^{i-1}k_s<j\leqslant d\sum\limits_{s=1}^{i}k_s$ найд\"{е}тся число $j_i$ такое, что $|v_{j_i}|\leqslant [{|v_{j_1}|\over\prod\limits_{s=2}^{i}k_s}]$. Рассмотрим $d|v_{j_i}|-$начало хвоста $u_{j_i + 1}$. Если оно не пересекается со словом $v_{[{3}d(\log_3{(nd)} + 2)]}$, то $v_{j_i}^d$ --- подслово слова $W$. В противном случае слово $\prod\limits_{j=d\sum\limits_{s=1}^{i}k_s + 1}^{d\sum\limits_{s=1}^{i+1}k_s}v_j$ является подсловом $d|v_{j_i}|-$начала хвоста $u_{j_i + 1}$.
         Рассмотрим $v_{j_{i+1}}$ --- слово наименьшей длины среди слов $v_j$ при $d\sum\limits_{s=1}^{i}k_s < j\leqslant d\sum\limits_{s=1}^{i+1}k_s$. Тогда $|v_{j_{i+1}}|\leqslant [{|v_{j_i}|\over k_{i+1}}]\leqslant [{|v_{j_1}|\over\prod\limits_{s=2}^{i+1}k_s}]$.
         Таким образом, $|v_t|\leqslant [{|v_{j_1}|\over\prod\limits_{s=2}^{t}k_s}] < 1.$ Получено противоречие, из которого и вытекает утверждение леммы.

\end{itemize}
\end{proof}





Пусть $W$ --- не $(n,d)$-сократимое слово. Рассмотрим $U$ --- $[\left|
W\right|/d]$-хвост слова $W$. Тогда $W$ --- не $(p_{n, d}+1)$-разбиваемое.
Пусть $\Omega$ --- множество хвостов слова $W$, которые начинаются \index{Слова!хвост}
в $U$. Тогда по лемме \ref{Lm0.1} любые два элемента из $\Omega$
сравнимы. Естественным образом строится биекция между $\Omega$,
буквами $U$ и натуральными числами от $1$ до
$\left|\Omega\right|=\left|U\right|$.

Введем слово $\theta$ такое, что $\theta$ лексикографически меньше
любого слова.

\begin{remark}
В текущем доказательстве теоремы \ref{c:main1} все хвосты мы
предполагаем лежащими в $\Omega$.
\end{remark}

\section{Оценки на появление периодических фрагментов}

\subsection{Применение теоремы Дилуорса.} Для хвостов $u$ и $v$ положим \index{Теорема!Дилуорса}
$u<v$, если $u \prec v$ и, кроме того, $u$ левее $v$. Тогда по
теореме Дилуорса $\Omega$ можно разбить на $p_{n, d}$ цепей, где в
каждой цепи $u \prec v$, если $u$ левее $v$. Покрасим начальные
позиции хвостов в $p_{n, d}$ цветов в соответствии с принадлежностью к
цепям. Фиксируем натуральное число $p$. Каждому
натуральному числу $i$ от 1 до $\left|\Omega\right|$ сопоставим
$B^p(i)$ --- упорядоченный набор из $p_{n, d}$ слов $\{f(i,j)\},$
построенных по следующему правилу:

{\it Для каждого $j = 1, 2,\ldots, p_{n, d}$\ положим

$$f(i,j)=\left\{\max \
f\leqslant i: f\ \mbox{раскрашено\ в\ цвет}~j\right\}.$$

Если такого $f$ не найдется, то слово из $B^p(i)$ на позиции
$j$ считаем равным $\theta$, в противном случае это слово считаем
равным $p$-хвос\-ту, который начинается с $f(i,j)$-ой
буквы. }

Неформально говоря, мы наблюдаем, с какой скоростью хвосты ``эволюционируют'' в своих цепях, если рассматривать последовательность позиций слова $W$ как ось времени.

\subsection{Наборы $B^p(i)$, процесс на позициях} \label{ss:set_bpi}

\begin{lemma}[О процессе]   \label{c:lem} \index{Лемма!о процессе}
Дана последовательность $S$ длины $|S|$, составленная из слов
длины $(k-1)$. Каждое из них состоит из $(k-2)$ символа $``0"$ и одной
$``1"$. Пусть $S$ удовлетворяет следующему условию:
{\em если для некоторого $0 < s \leqslant k-1$ найдутся $p_{n, d}$
слов, в которых $``1"$ стоит на $s$-ом месте, то между первым и
$p_{n, d}$-ым из этих слов найдется слово, в котором $``1"$ стоит
строго меньше, чем на $s$-ом месте}; $L(k-1)=\sup\limits_S |S|$.

Тогда $L(k-1)\leqslant p_{n, d}^{k-1}-1$.
\end{lemma}

\begin{proof} $L(1) \leqslant p_{n, d}-1$. Пусть $L(k-1) \leqslant {p_{n, d}^{k-1}} -
1$. Покажем, что $L(k) \leqslant {p_{n, d}^{k}} - 1$.
Рассмотрим слова, у которых символ $``1"$ стоит на первом месте.
Их не больше $p_{n, d}-1$. Между любыми двумя из них, а также перед
первым и после последнего, количество слов не больше $L(k-1)
\leqslant {p_{n, d}^{k-1}} - 1$. Получаем, что
$$
L(k) \leqslant p_{n, d} - 1 +
\left(p_{n, d}\right)\left({\left(p_{n, d}\right)^{k-1}}-1\right) =
{\left(p_{n, d}\right)^{k}}-1$$
\end{proof}

Нам требуется ввести некоторую величину, которая бы численно оценивала скорость ``эволюции'' наборов $B^p(i)$:

\begin{definition}
Положим $$\psi(p):= \left\{\max \ k:
B^p(i)=B^p(i+k-1)\right\}.$$
В частности, по лемме \ref{c:lem1.2},
$\psi(p_{n,d})\leqslant p_{n, d} d$.

Для заданного $\alpha$ определим разбиение последовательности первых $\left|\Omega\right|$ позиций
${i}$ слова $W$ на классы эквивалентности $\AC_\alpha$ следующим образом: $i
\AC_\alpha j$, если $B^\alpha(i)=B^\alpha(j).$
\end{definition}

\begin{proposition}
Для любых натуральных $a<b$ имеем $\psi(a) \leqslant \psi(b).$
\end{proposition}

\begin{lemma}[Основная] \label{c:lem2} \index{Лемма!основная!для ниль-случая}
Для любых натуральных чисел $a, k$ верно неравенство
$$\psi(a)\leqslant p_{n,d}^k\psi(k\cdot a)+k\cdot a$$
\end{lemma}

\begin{proof}
Рассмотрим по наименьшему представителю из каждого класса $\AC_{k\cdot a}$. Получена последовательность позиций $\{i_j\}.$ Теперь рассмотрим все $i_j$ и $B^{k\cdot a}(i_j)$ из одного класса эквивалентности по $\AC_a.$ Пусть он состоит из $B^{k\cdot a}(i_j)$ при $i_j\in[b, c).$ Обозначим за $\{i_j\}'$ отрезок последовательности $\{i_j\},$ для которого $i_j\in[b, c-k\cdot a).$

Фиксируем некоторое натуральное число $r, 1\leqslant r\leqslant p_{n,d}.$ Назов\"{е}м все $k\cdot a$-начала цвета $r$, начинающиеся с позиций слова $W$ из $\{i_j\}'$, представителями типа $r$. Все представители типа $r$ будут попарно различны, так как они начинаются с наименьших позиций в классах эквивалентности по $\AC_{k\cdot a}.$ Разобь\"{е}м каждый представитель типа $r$ на $k$ сегментов длины $a$. Пронумеруем сегменты внутри каждого представителя типа $r$ слева направо числами от нуля до $(k-1)$. Если найдутся $(p_{n,d}+1)$ представителей типа $r$, у которых совпадают первые $(t-1)$ сегментов, но которые попарно различны в $t$-ом, где $t$ --- натуральное число, $1\leqslant t\leqslant k-1$, то найдутся две первых буквы $t$-го сегмента одного цвета. Тогда позиции, с которых начинаются эти сегменты, входят в разные классы эквивалентности по $\AC_a.$

Применим лемму \ref{c:lem} следующим образом: во всех представителях типа $r$, кроме самого правого, будем считать сегменты {\it единичными}, если именно в них находится наименьшая позиция, в которой текущий представитель типа $r$ отличается от предыдущего. Остальные сегменты считаем {\it нулевыми.}

Теперь можно применить лемму о процессе с параметрами, совпадающими с заданными в условии леммы. Получаем, что в последовательности $\{i_j\}'$ будет не более $p_{n,d}^{k-1}$ представителей типа $r$. Тогда в последовательности $\{i_j\}'$  будет не более $p_{n,d}^{k}$ членов. Таким образом, $c-b\leqslant p_{n,d}^k\psi(k\cdot a)+k\cdot a.$ \end{proof}

\subsection{Завершение доказательства субэкспоненциальности индекса нильпотентности}

Пусть
$$a_0 = 3^{\ulcorner \log_3 p_{n,d}\urcorner}, a_1 = 3^{\ulcorner \log_3 p_{n,d}\urcorner-1},\ldots,a_{{\ulcorner \log_3 p_{n,d}\urcorner}} =1.$$
При этом $\left|W\right|\leqslant d\left|\Omega\right| + d$ в силу леммы \ref{Lm0.1}.

Так как набор $B^1(i)$ принимает не более $(1+p_{n,d}l)$ различных значений, то $\left|W\right|\leqslant d(1+p_{n,d}l)\psi(1) + d.$

По лемме \ref{c:lem2}
$$\psi(1)< (p_{n,d}^3 + p_{n,d})\psi(3)<(p_{n,d}^3 + p_{n,d})^2\psi(9)<\cdots <(p_{n,d}^3 + p_{n,d})^{\ulcorner \log_3 p_{n,d}\urcorner}\psi(p_{n,d})\leqslant
$$
$$\leqslant (p_{n,d}^3 + p_{n,d})^{\ulcorner \log_3 p_{n,d}\urcorner}p_{n,d}d
$$

Подставляя $p_{n,d} = 4nd-1,$ получаем
$$\left|W\right|< 4^{5+3\log_3 4} l (nd)^{3 \log_3 (nd)+(5+6\log_3 4)}d^2.
$$

Отсюда имеем {\bf утверждение теоремы \ref{c:main1}.}

Доказательство теоремы \ref{t2:log_2} завершается так же, только вместо последовательности
$$a_0 = 3^{\ulcorner \log_3 p_{n,d}\urcorner}, a_1 = 3^{\ulcorner \log_3 p_{n,d}\urcorner-1},\ldots,a_{{\ulcorner \log_3 p_{n,d}\urcorner}} =1$$
рассматривается последовательность
$$a_0 = 2^{\ulcorner \log_2 p_{n,d}\urcorner}, a_1 = 2^{\ulcorner \log_2 p_{n,d}\urcorner-1},\ldots,a_{{\ulcorner \log_2 p_{n,d}\urcorner}} =1.$$

\newpage

\chapter{Оценки высоты и существенной высоты конечно порожд\"{е}нной $\PI$-алгебры.}\label{ch:height}

\section{Оценка существенной высоты.}\label{ch:height:s:ess}
В данном разделе мы продолжаем доказывать основную теорему \ref{c:main2}.
Попутно доказывается теорема \ref{ThThick}. Будем смотреть на позиции букв слова $W$ как на ось времени, то есть подслово $u$ встретилось раньше подслова $v$, если $u$ целиком лежит левее $v$ внутри слова $W$.



\subsection{Нахождение различных периодических фрагментов в слове} \label{c:sub1}

Обозначим за $s$ количество подслов слова $W$ с периодом длины меньше $n$, в которых период повторяется больше $2n$ раз и которые попарно разделены сравнимыми с предыдущим периодом подсловами длины больше $n$. Пронумеруем их от начала к концу слова: $x_1^{2n}, x_2^{2n},\ldots,x_s^{2n}$. Таким образом $W=y_0x_1^{2n}y_1x_2^{2n}\cdots x_s^{2n}y_s.$



Если найд\"{е}тся $i$ такое, что слово $x_i$ длины не меньше $n$, то в слове $x_i^2$ найдутся $n$ попарно сравнимых хвостов, а значит, слово $x_i^{2n}$-- $n$-разбиваемое. Получаем, что число $s$ не меньше, чем существенная высота слова $W$ над множеством слов длины меньше $n.$

\begin{definition} \index{Слово!нециклическое}
Слово $u$ назовем {\em нециклическим}, если $u$ нельзя представить
в виде $v^k$, где $k>1$.
\end{definition}

\begin{definition} \index{Слово!-цикл}
{\em Слово-цикл $u$} --- слово $u$ со всеми его сдвигами по циклу.
\end{definition}
\begin{definition} \index{Слово!циклическое}
{\em Циклическое слово $u$} --- цикл из букв слова $u$, где после его последней буквы ид\"{е}т первая.
\end{definition}
\begin{definition} \index{Слова!сравнимые!сильно}
Если любые два циклических сдвига слов $u$ и $v$ сравнимы, то назов\"{е}м слова $u$ и $v$ {\em сильно сравнимыми.}
Аналогично определяется сильная сравнимость слово-циклов и циклических слов.
\end{definition}

Далее мы будем пользоваться естественной биекцией между слово-циклами и циклическими словами.

\begin{definition} \index{Слово!$n$-разбиваемое!сильно}
Слово $W$ называется {\em сильно $n$-разбиваемым}, если его 
можно представить в виде $W=W_0W_1\cdots W_n$, где подслова
$W_1,\dots,W_n$ идут в порядке лексикографического убывания, и
каждое из слов $W_i, i=1, 2,\ldots, n$ начинается с некоторого
слова $z_i^k\in Z$, все $z_i$ различны.
\end{definition}

\begin{lemma}\label{lem4.10}
Если найд\"{е}тся число $m, 1\leqslant m<n,$ такое, что существуют $(2n-1)$ попарно несравнимых слов длины $m: x_{i_1},\ldots ,x_{i_{2n-1}},$ то $W$ --- $n$-разбиваемое.
\end{lemma}
 \begin{proof}Положим $x:=x_{i_1}.$ Тогда в слове $W$ найдутся непересекающиеся подслова\linebreak[1] $x^{p_1}v'_1,\ldots ,x^{p_{2n-1}}v'_{2n-1},$ где $p_1,\ldots ,p_{2n-1}$ --- некоторые натуральные числа, большие $n,$ а $v'_1,\ldots ,v'_{2n-1}$ --- некоторые слова длины $m,$ сравнимые с $x, v'_1=v_{i_1}.$ Тогда среди слов $v'_1,\ldots ,v'_{2n-1}$ найдутся либо $n$ лексикографически больших $x$, либо $n$ лексикографически меньших $x$. Можно считать, что $v'_1,\ldots ,v'_n$ --- лексикографически больше $x$. Тогда в слове $W$ найдутся подслова $v'_1, xv'_2,\ldots ,x^{n-1}v'_n,$ идущие слева направо в порядке лексикографического убывания.\end{proof}

Рассмотрим некоторое число $m, 1\leqslant m<n.$ Разобь\"{е}м все $x_i$ длины $m$ на эквивалентности по сильной несравнимости и выберем по одному представителю из каждого класса эквивалентности. Пусть это слова $x_{i_1},\ldots ,x_{i_s'},$
где $s'$ --- некоторое натуральное число. Так как подслова $x_i$ являются периодами, будем рассматривать их как слово-циклы.

\begin{notation}

$v_k := x_{i_k}$

Пусть $v(k, i)$, где $i$ --- натуральное число от 1 до $m$, --- циклический сдвиг слова $v_k$ на $(k - 1)$ позиций вправо, то есть $v(k, 1) = v_k$, а первая буква слова $v(k, 2)$ является второй буквой слова $v_k$. Таким образом, $\{ v(k, i)\}_{i=1}^m$ --- слово-цикл слова $v_k$. Заметим, что для любых $1\leqslant i_1, i_2\leqslant p, 1\leqslant j_1, j_2\leqslant m$ слово $v(i_1, j_1)$ сильно несравнимо со словом $v(i_2, j_2)$.
\end{notation}


\begin{remark}
Случаи $m = 2, 3, n-1$ более подробно рассмотрены в работах \cite{Kh11, Kh11(2)}.
\end{remark}

\subsection{Применение теоремы Дилуорса}      \label{c:sub2}\index{Теорема!Дилуорса}

Рассмотрим множество $\Omega' = \{ v(i, j)\}$, где $1\leqslant i\leqslant p, 1\leqslant j\leqslant m.$ Введ\"{е}м следующий порядок на словах $v(i, j):$

$v(i_1, j_1)\succ v(i_2, j_2),$ если

1) $v(i_1, j_1)> v(i_2, j_2)$

2) $i_1 > i_2$

\begin{lemma}\label{c:lem4}
Если во множестве $\Omega'$ для порядка $\succ$ найд\"{е}тся антицепь длины $n$, то слово $W$ будет $n$-разбиваемым.
\end{lemma}
 \begin{proof}Пусть нашлась антицепь длины $n$ из слов $v(i_1, j_1)$, $v(i_2, j_2),\ldots,$ $v(i_n, j_n)$; где $i_1\leqslant i_2\leqslant\cdots\leqslant i_n.$ Если все неравенства между $i_k$ --- строгие, то слово $W$ --- $n$-разбиваемое по определению.

Предположим, что для некоторого числа $r$ нашлись $i_{r+1} = \cdots = i_{r+k}$, где либо $r = 0$, либо $i_r < i_{r+1}$. Кроме того, $k$ --- такое натуральное число, что
либо $k = n - r$, либо $i_{r+k} < i_{r+k+1}$.

Слово $s_{i_{r+1}}$ --- периодическое, следовательно, оно представляется в виде произведения $n$ экземпляров слова $v^2_{i_{r+1}}$. Слово $v^2_{i_{r+1}}$ содержит слово-цикл $v_{i_{r+1}}$. Значит, в слове $s_{i_{r+1}}$ можно выбрать непересекающиеся подслова, идущие в порядке лексикографического убывания, равные $v(i_{r+1}, j_{r+1}),\ldots,v(i_{r+k}, j_{r+k})$ соответственно. Таким же образом поступаем со всеми множествами равных индексов  в последовательности $\{i_r\}_{r=1}^n$. Получаем $n$-разбиваемость слова $W$. Противоречие.\end{proof}

Значит, множество $\Omega'$ можно разбить на $(n-1)$ цепь.

\begin{notation}
Положим $q_n = (n-1)$.
\end{notation}

\subsection{Наборы $C^\alpha(i)$, процесс на позициях}

Покрасим первые буквы слов из $\Omega'$ в $q_n$ цветов в соответствии с
принадлежностью к цепям. Покрасим также числа от $1$ до $\left|\Omega '
\right|$ в соответствующие цвета. Фиксируем натуральное число
$\alpha\leqslant m$. Каждому числу $i$ от 1 до $\left|\Omega '
\right|$ сопоставим упорядоченный набор слов $C^\alpha(i)$,
состоящий из $q_n$ слов по следующему правилу:

\medskip
{\it Для каждого $j=1,2,\ldots,q_n$\ положим

$f(i,j)=\{\max \ f\leqslant i:$ существует $k$ такое, что $v(f,k)$ раскрашено в цвет $j$ и
$\alpha$-хвост, который начинается с $f$, состоит только из
букв, являющихся первыми буквами хвостов из $\Omega '\}$.

Если такого $f$ не найд\"{е}тся, то слово из $C^\alpha(i)$ считаем
равным $\theta$, в противном случае это слово считаем равным
$\alpha$-хвосту слова $v(f,k)$. }

\begin{notation}
Положим $\phi(a)$ равным наибольшему $k$ такому, что найд\"{е}тся число $i$, для которого верно равенство $C^a(i)=C^a(i+k-1)$.

Для заданного $a\leqslant m$ определим разбиение последовательности
слово-циклов $\{i\}$ слова $W$ на классы эквивалентности следующим
образом: $i \AC_a j,$ если $C^a(i) = C^a(j)$.
\end{notation}

Заметим, что построенная конструкция во многом аналогична построенной в доказательстве теоремы \ref{c:main1}. Можно обратить внимание на схожесть $B^a(i)$ и $C^a(i)$, а также $\psi(a)$ и $\phi(a)$.

\begin{lemma}\label{lem:m}
$\phi(m) \leqslant q_n/m$.
\end{lemma}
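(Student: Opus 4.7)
The strategy is to leverage the fact that $\alpha = m$ is the maximum relevant parameter: for each $v(f,i) \in \Omega'$ of length $m - i + 1 \le m$, the $m$-suffix coincides with the word itself. Hence $C^m(i)$ records, in each of the $q_n = n - 1$ block-slots, the most recent element (as a word) of that block up to position $i$. The blocks are $\succ$-chains coming from the decomposition built in Section \ref{c:sub2} (which uses that, under our standing hypothesis, $\Omega'$ admits no $\succ$-antichain of length $n$). A key structural observation is that for each fixed pseudo-period $v_f$, its $m$ cyclic shifts $v(f,1),\ldots,v(f,m)$ all share the first index $f$ and are therefore pairwise $\succ$-incomparable, since $\succ$ demands a strict comparison in the index coordinate. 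Thus the $m$ cyclic shifts of each $v_f$ form a $\succ$-antichain of length $m$ and, by the chain-decomposition, must occupy $m$ distinct blocks.

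To prove the bound, I would consider a window $\{i, i+1, \ldots, i+k-1\}$ witnessing $C^m(i) = C^m(i+k-1)$. For any block $j$ that receives a new element somewhere in $[i+1, i+k-1]$, the most recent element of block $j$ at position $i+k-1$ must agree, as a word, with the element at position $f(i,j)$, by equality of the two $C^m$ tuples. Combined with the observation in Section \ref{c:sub1} that distinct elements of $\Omega'$ are pairwise distinct as words, this matching condition is highly restrictive: a pseudo-period $v_f$ contributing a shift to the window is forced, through the chain structure, to have its entire collection of $m$ shifts implicated in the equality. Since those $m$ shifts occupy $m$ distinct blocks, and only $q_n$ blocks exist in total, at most $\lfloor q_n/m \rfloor$ distinct pseudo-periods $v_f$ can have shifts participating in the window. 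This yields $k \le q_n/m$, i.e., $\phi(m) \le q_n/m$.

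The main obstacle will be the passage from the local per-block matching condition to the global multiplicative bound $k m \le q_n$. The delicate point is to argue that each contributing pseudo-period $v_f$ truly ``uses up'' all $m$ of its block-slots for the equality $C^m(i) = C^m(i+k-1)$ to be preserved, rather than only the specific block occupied by the shift inside the window. I expect this to follow from a careful interleaving of the chain structure inside each block with the $\succ$-antichain structure of the $m$ shifts of each $v_f$, exploiting the dual (word, index) nature of $\succ$ to forbid simultaneous contributions of too many distinct $v_f$'s. Once that alignment is in place, the count collapses to a clean pigeonhole on $m$ and $q_n$.
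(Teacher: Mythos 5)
Your structural observations are sound and consistent with the paper's setup: since the order $\succ$ on $\Omega'$ requires a strict inequality in the first (index) coordinate, the $m$ cyclic shifts $v(f,1),\dots,v(f,m)$ of a fixed pseudo-period are pairwise $\succ$-incomparable, hence meet each block of the decomposition of $\Omega'$ into $q_n$ blocks at most once and so occupy $m$ distinct blocks; and at level $\alpha=m$ the tuple $C^m(\cdot)$ records elements of $\Omega'$ as complete words, which are pairwise distinct. (A small slip: the $v(f,i)$ are cyclic shifts of $v_f$, all of length exactly $m$, not suffixes of length $m-i+1$; this does not affect the way you use them.)

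The decisive step, however, is missing, and you say so yourself. To obtain $km\leqslant q_n$ you need the $k$ cycle-words of the window to consume $km$ block-slots \emph{without repetition}, i.e.\ that distinct participating pseudo-periods occupy pairwise disjoint (or at least jointly large) collections of blocks. Nothing in your text rules out that all $k$ cycle-words distribute their shifts over the \emph{same} $m$ blocks, which would collapse the count to $m\leqslant q_n$ and prove nothing; the phrases ``I expect this to follow from a careful interleaving'' and ``once that alignment is in place'' defer exactly the point where the proof lives. The paper runs the pigeonhole in the opposite direction and never needs this disjointness: it takes the $[q_n/m]+1$ consecutive cycle-words of a window on which $C^m$ is unchanged, notes that they supply $([q_n/m]+1)\cdot m>q_n$ pairwise distinct elements of $\Omega'$, so that the $q_n$ blocks cannot separate them; combined with the invariance of the $C^m$-records (which at $\alpha=m$ are the words themselves) this forces two of the enumerated words to coincide, contradicting their pairwise distinctness. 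You should either supply the disjointness argument you allude to, or switch to this direct ``too many distinct words for too few blocks'' count.
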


 \begin{proof}Напомним, что слово-циклы были пронумерованы. Рассмотрим слово-циклы с номерами $i, i + 1,\ldots ,i + [q_n/m].$ Ранее было показано, что каждый слово-цикл состоит из $m$ различных слов. Рассмотрим теперь слова в слово-циклах $i, i + 1,\ldots ,i + [q_n/m]$ как элементы множества $\Omega'.$ При таком рассмотрении у первых букв из слово-циклов появляются свои позиции. Всего рассматриваемых позиций не меньше $n.$ Следовательно, среди них найдутся две позиции одного цвета. Тогда в силу сильной несравнимости слово-циклов имеем утверждение леммы.\end{proof}

\begin{proposition}
Для любых натуральных $a<b$ имеем $\phi(a) \leqslant \phi(b).$
\end{proposition}

\begin{lemma}[Основная] \label{lem:thick} \index{Лемма!основная!для общего случая}
Для натуральных чисел $a, k$ таких, что $ak\leqslant m$, верно неравенство
$$\phi(a)\leqslant p_{n,d}^k\phi(k\cdot a).$$
\end{lemma}

\begin{proof}
Рассмотрим по наименьшему представителю из каждого класса $\AC_{k\cdot a}$. Получена последовательность позиций $\{i_j\}.$ Теперь рассмотрим все $i_j$ и $C^{k\cdot a}(i_j)$ из одного класса эквивалентности по $\AC_a.$ Пусть он состоит из $C^{k\cdot a}(i_j)$ при $i_j\in[b, c).$ Обозначим за $\{i_j\}'$ отрезок последовательности $\{i_j\},$ для которого $i_j\in[b, c).$

Фиксируем некоторое натуральное число $r, 1\leqslant r\leqslant q_n.$ Назов\"{е}м все $k\cdot a$-начала цвета $r$, начинающиеся с позиций слова $W$ из $\{i_j\}'$, представителями типа $r$. Все представители типа $r$ будут попарно различны, так как они начинаются с наименьших позиций в классах эквивалентности по $\AC_{k\cdot a}.$ Разобь\"{е}м каждый представитель типа $r$ на $k$ сегментов длины $a$. Пронумеруем сегменты внутри каждого представителя типа $r$ слева направо числами от нуля до $(k-1)$. Если найдутся $(q_n+1)$ представителей типа $r$, у которых совпадают первые $(t-1)$ сегментов, но которые попарно различны в $t$-ом, где $t$ --- натуральное число, $1\leqslant t\leqslant k-1$, то найдутся две первых буквы $t$-го сегмента одного цвета. Тогда позиции, с которых начинаются эти сегменты, входят в разные классы эквивалентности по $\AC_a.$

Применим лемму \ref{c:lem} следующим образом: во всех представителях типа $r$, кроме самого правого, будем считать сегменты {\it единичными}, если именно в них находится наименьшая позиция, в которой текущий представитель типа $r$ отличается от предыдущего. Остальные сегменты будем считать {\it нулевыми.}

Теперь мы можем применить лемму о процессе с параметрами, совпадающими с заданными в условии леммы. Получаем, что в последовательности $\{i_j\}'$ будет не более $q_n^{k-1}$ представителей типа $r$. Тогда в последовательности $\{i_j\}'$  будет не более $q_n^{k}$ членов. Таким образом, $c-b\leqslant q_n^k\phi(k\cdot a).$ \end{proof}

\subsection{Завершение доказательства субэкспоненциальности существенной высоты}

Пусть
$$a_0 = 3^{\ulcorner \log_3 p_{n,d}\urcorner}, a_1 = 3^{\ulcorner \log_3 p_{n,d}\urcorner-1},\ldots,a_{{\ulcorner \log_3 p_{n,d}\urcorner}} =1.$$
Подставляя эти $a_i$ в леммы
\ref{lem:thick} и \ref{lem:m}, получаем, что
$$\phi(1)\leqslant q_n^3\phi(3)\leqslant q_n^9\phi(9)\leqslant\cdots \leqslant q_n^{3\ulcorner \log_3 m\urcorner}\phi(m)\leqslant
$$
$$\leqslant q_n^{3\ulcorner \log_3 m\urcorner+1}. $$

Так как $C_i^{1}$ принимает не более $1+q_n l$ различных значений, то

$$\left|\Omega'\right|< q_n^{3\ulcorner \log_3 m\urcorner+1}(1+q_n l)< n^{3\ulcorner \log_3 n\urcorner+2} l.$$

По лемме \ref{lem4.10} получаем, что количество $x_i$ длины $m$ меньше $2n^{3\ulcorner \log_3 n\urcorner+3} l.$

Имеем, что количество всех $x_i$ меньше $2n^{3\ulcorner \log_3 n\urcorner+4} l.$

То есть $s<2n^{3\ulcorner \log_3 n\urcorner+4} l.$ Таким образом, {\bf теорема \ref{ThThick} доказана.}

\section{Оценка высоты в смысле Ширшова} \label{ch:height:s:sh}

\subsection{План доказательства}
Будем снова под {\em $n$-разбиваемым
словом} подразумевать {\em $n$-раз\-би\-ва\-е\-мое} в обычном
смысле.
Сначала мы находим необходимое количество фрагментов с длиной периода не меньше $2n$ в слове $W$. Это можно сделать, просто разбив слово $W$ на подслова большой длины, к которым применяется теорема \ref{c:main1}. Однако мы можем улучшить оценку, если сначала выделим в слове $W$ периодический фрагмент с длиной периода не менее $4n$, затем рассмотрим $W_1$ --- слово $W$ с ``вырезанным'' периодическим фрагментом $u_1$. У слова $W_1$ выделяем фрагмент с длиной периода не менее $4n$, после чего рассматриваем $W_2$ --- слово $W_1$ с ``вырезанным'' периодическим фрагментом $u_2$. У слова $W_2$ так же вырезаем периодический фрагмент. Далее продолжаем этот процесс, подробнее описанный в алгоритме \ref{c:al}. Затем по вырезанным фрагментам мы восстанавливаем первоначальное слово $W$. После этого показывается, что в слове $W$ подслово $u_i$ чаще всего не является произведением большого количества не склеенных подслов. В лемме \ref{c:lem3} доказывается, что применение алгоритма \ref{c:al} дает необходимое количество подслов слова $W$ с длиной периода не меньше $2n$ среди вырезанных подслов.

\subsection{Суммирование существенной высоты и степени нильпотентности}

До конца главы будем использовать следующее
\begin{notation} \index{Высота!слова}
$\Ht(w)$ --- высота слова $w$ над множеством слов степени не выше $n$.
\end{notation}
Рассмотрим слово $W$ с высотой $\Ht(W)>\Phi(n,l)$. Теперь для него
провед\"{е}м следующий алгоритм:

\begin{algorithm}  \label{c:al}
{\ }

\begin{description}
    \item[Первый шаг.] По теореме \ref{c:main1} в слове $W$ найд\"{е}тся подслово с длиной периода
    $4n$. Пусть $W_0=W=u'_1x_{1'}^{4n}y'_1$, прич\"{е}м слово $x_{1'}$ --- нециклическое.
    Представим $y'_1$ в виде $y'_1=x_{1'}^{r_2}y_1$, где $r_2$ --- максимально возможное
    число. Слово $u'_1$ представим как $u'_1=u_1x_{1'}^{r_1}$, где $r_1$ --- наибольшее возможное. Обозначим за $f_1$ следующее слово:
$$W_0=u_1x_{1'}^{4n+r_1+r_2}y_1=u_1f_1y_1.$$
    Назов\"{е}м позиции, входящие в слово $f_1$, {\em скучными,} последнюю позицию слова $u_1$ --- {\em скучной типа $1$,} вторую с конца позицию $u_1$ --- {\em скучной типа $2$} и так далее, $n$-ую с конца позицию $u_1$ --- {\em скучной типа $n$.} Положим $W_1 = u_1 y_1.$

    \item[$k$-ый шаг.] Рассмотрим слова $u_{k-1},\ y_{k-1},\ W_{k-1}=u_{k-1}y_{k-1}$,
    построенные на предыдущем шаге. Если $|W_{k-1}|\geqslant\Phi(n,l),$ то применим теорему \ref{c:main1} к слову $W$ с тем условием, что процесс в основной лемме \ref{c:lem2} будет вестись только по не скучным позициям и скучным позициям типа больше $ka,$ где $k$ и $a$ --- параметры леммы \ref{c:lem2}.

    Таким образом, в слове $W_{k-1}$ найд\"{е}тся
нециклическое подслово с длиной периода $4n$, так что
$$W_{k-1}=u'_kx_{k'}^{4n}y'_k.$$
При этом положим
$$r_1 := \sup \{r: u'_k = u_k x_{k'}^r\},\quad r_2 :=\sup
\{r: y'_k = x_{k'}^r y_k\}.$$

(Отметим, что слова в наших рассуждениях могут быть пустыми.)\linebreak[3]
Определим  $f_k$ из равенства:
$$W_{k-1}= u_kx_{k'}^{4n+r_1+r_2}y_k = u_kf_ky_k.$$
Назов\"{е}м позиции, входящие в слово $f_k$, {\em скучными}, последнюю позицию слова $u_k$ --- {\em скучной типа $1$,} вторую с конца позицию $u_k$ --- {\em скучной типа $2$} и так далее, $n$-ую с конца позицию $u_k$ --- {\em скучной типа $n$}. Если позиция в процессе алгоритма определяется как скучная двух типов, то будем считать е\"{е} скучной того типа, который меньше. Положим $W_k = u_k y_k.$
\end{description}

\end{algorithm}

\begin{notation}
Провед\"{е}м $4t+1$ шагов алгоритма \ref{c:al}. Рассмотрим
первоначальное слово $W$. Для каждого натурального $i$ из отрезка
$[1,4t]$ имеет место равенство
$$
W = w_0f_i^{(1)}w_1f_i^{(2)}\cdots f_i^{(n_i)} w_{n_i}
$$
для некоторых подслов $w_j$. Здесь $f_i = f_i^{(1)}\cdots
f_i^{(n_i)}$. Также мы считаем, что при $1\leqslant j\leqslant
n_i-1$ подслово $w_j$ --- непустое. Пусть $s(k)$ --- количество
индексов $i\in[1,4t]$ таких, что $n_i = k$.
\end{notation}

Для доказательства теоремы \ref{c:main1} требуется найти как можно больше длинных периодических фрагментов. Помочь в этом сможет следующая лемма:

\begin{lemma}   \label{c:lem3}
$s = s(1) + s(2) \geqslant 2t$.
\end{lemma}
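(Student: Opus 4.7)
My plan is to exploit a laminar (non-crossing) structure of the intervals of $W$ occupied by the pieces $f_i$, and then reduce the inequality $s\ge 2t$ to an elementary counting inequality on the resulting forest.

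For each $i\in\{1,\dots,4t\}$ let $P_i$ be the set of positions in $W$ occupied by the letters of $f_i$, and let $[a_i,b_i]$ be the smallest interval of positions of $W$ containing $P_i$. The sets $P_i$ are pairwise disjoint, and since $f_i$ is contiguous inside $W_{i-1}=W\setminus(P_1\cup\cdots\cup P_{i-1})$ and its endpoints there lie in $P_i$, we have $a_i,b_i\in P_i$ and $[a_i,b_i]\setminus P_i\subseteq P_1\cup\cdots\cup P_{i-1}$. The first claim I would verify is that $\{[a_i,b_i]\}$ forms a laminar family. Indeed, if for some $j<i$ the intervals overlapped partially, then one of the endpoints of $[a_i,b_i]$ would belong to $P_i$ and also lie in $[a_j,b_j]\setminus P_j\subseteq P_1\cup\cdots\cup P_{j-1}$, contradicting the disjointness of the $P_k$.

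The laminar family defines a forest $\mathcal{F}$ on the vertex set $\{1,\dots,4t\}$ whose edges join each non-root $i$ to its immediate ancestor (the $j$ minimizing $[a_i,b_i]\subsetneq[a_j,b_j]$); this forest has at most $4t-1$ edges. Inside each $[a_i,b_i]$ we obtain the decomposition $[a_i,b_i]=P_i\sqcup\bigsqcup_{c}[a_c,b_c]$ where $c$ ranges over the direct children of $i$ in $\mathcal{F}$. Since $w_1,\dots,w_{n_i-1}$ are exactly the maximal contiguous runs of non-$P_i$ positions in $[a_i,b_i]$, each such run is a union of one or more adjacent children's intervals $[a_c,b_c]$, which yields
$$n_i-1\le \#\{\text{direct children of }i\text{ in }\mathcal{F}\}.$$

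Summing over $i$, the quantity $\sum_{i=1}^{4t}(n_i-1)$ is bounded by the number of edges of $\mathcal{F}$, hence by $4t-1<4t$. Writing $C=\sum_{k\ge 3}s(k)$ and using $s(1)+s(2)+C=4t$, we get $\sum_i n_i=s(1)+2s(2)+\sum_{k\ge 3}k\,s(k)\ge 4t+s(2)+2C$; combined with $\sum_i n_i<8t$ this gives $C<2t$, so $s=s(1)+s(2)=4t-C>2t$, proving the lemma. The only delicate step is the laminar property above; once it is in hand, the rest is pure double counting on the forest $\mathcal{F}$.
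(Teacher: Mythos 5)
Your proof is correct, but it takes a genuinely different route from the paper's. The paper argues dynamically: it introduces the ``maximal fragments'' of $W$ containing exactly one piece $f_i^{(j)}$, tracks their number $k_i$ along the steps of Algorithm \ref{c:al}, and shows $k_i\leqslant k_{i-1}-n_i+2$, so that every step with $n_i\geqslant 3$ strictly decreases this potential while a step with $n_i\leqslant 2$ raises it by at most $1$; since the potential starts at $1$ and never drops below $1$, the number of ``decreasing'' steps cannot exceed the number of ``increasing'' ones, whence $s\geqslant 2t$. You instead work with the final static configuration: the position sets $P_i$ and their convex hulls $[a_i,b_i]$ form a laminar family (your verification of this, via $[a_i,b_i]\setminus P_i\subseteq P_1\cup\dots\cup P_{i-1}$ and disjointness of the $P_k$, is sound, and one even gets that nesting always goes ``later contains earlier''), each inner gap $w_j$ of $f_i$ is a nonempty union of whole children intervals, so $n_i-1$ is at most the number of direct children of $i$, and summing over the forest bounds $\sum_i(n_i-1)$ by the edge count $\leqslant 4t-1$. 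Your bookkeeping from there is correct and in fact yields more than the paper does: you obtain $\sum_k k\,s(k)\leqslant 8t-1$ (versus the $10t$ of Corollary \ref{c:cor}, which your estimate would sharpen) and the strict inequality $s>2t$. The trade-off is that your argument needs the laminarity lemma as a separate structural step, whereas the paper's amortized count uses only the one-step recursion; both rest on the same underlying fact that each step deletes a single contiguous block of the current word.
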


\begin{proof} Назов\"{е}м {\it монолитным}  подслово $U$ слова $W$, если
\begin{enumerate}
    \item $U$ является произведением слов вида $f_i^{(j)}$,
    \item $U$ не является подсловом слова, для которого выполняется предыдущее
свойство (1).
\end{enumerate}

Пусть после $(i-1)$-го шага алгоритма \ref{c:al} в слове $W$
содержится $k_{i-1}$ монолитных подслов. Заметим, что $k_i
\leqslant k_{i-1}-n_i+2$.

 Тогда если $n_i \geqslant 3$, то $k_i\leqslant k_{i-1}-1.$
Если же $n_i\leqslant 2$, то $k_i\leqslant k_{i-1}+1$. При этом
$k_1=1$, $k_t \geqslant 1 = k_1$. Лемма доказана. \end{proof}

\begin{corollary} \label{c:cor}
$$\sum\limits_{k=1}^\infty {k\cdot s(k)} \leqslant 10t\leqslant 5s.(\ref{c:cor})$$
\end{corollary}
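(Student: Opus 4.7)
The plan is to identify the left-hand side as a weighted count over algorithm steps, and then to extract from the proof of Lemma~\ref{c:lem3} the sharp form of the recurrence on the number of alien subwords $k_i$, which that lemma used only in two coarse case-specific forms.

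First, straight from the definition of $s(k)$ as the number of indices $i\in[1,4t]$ with $n_i=k$, I would note the identity
$$\sum_{k=1}^{\infty} k\cdot s(k) \;=\; \sum_{i=1}^{4t} n_i,$$
so it is enough to bound $\sum_{i=1}^{4t} n_i$.

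The main ingredient is the uniform recurrence $n_i\le k_{i-1}-k_i+2$, which is just the rearrangement of the inequality $k_i\le k_{i-1}-n_i+2$ that appears in the proof of Lemma~\ref{c:lem3} (the two one-sided bounds stated there, namely $k_i\le k_{i-1}-1$ when $n_i\ge 3$ and $k_i\le k_{i-1}+1$ when $n_i\le 2$, are its specializations). Summing this uniform bound telescopically over $i=1,\dots,4t$ gives
$$\sum_{i=1}^{4t} n_i \;\le\; (k_0-k_{4t}) + 2\cdot 4t \;\le\; 8t,$$
where I use that $k_0=0$ (no pieces $f_i^{(j)}$ exist before the algorithm is launched) and $k_{4t}\ge 0$. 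This yields the first inequality $\sum_{k} k\cdot s(k)\le 8t\le 10t$, actually with room to spare. For the second inequality $10t\le 5s$, I would simply invoke Lemma~\ref{c:lem3}, which gives $s=s(1)+s(2)\ge 2t$, whence $5s\ge 10t$.

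The only non-routine point, and the one I expect to be the main obstacle, is justifying the uniform inequality $n_i\le k_{i-1}-k_i+2$ in all regimes, including $n_i=1,2$. This, however, is implicit in the bookkeeping already carried out in Lemma~\ref{c:lem3}: a single substitution step replaces the $n_i$ pieces of $f_i$ by one block $f_i$, merging at most $n_i$ pre-existing alien subwords into a single new one and possibly creating at most one extra alien subword on each side of the newly formed $f_i$, for a net change of at most $-(n_i-2)$ alien subwords.
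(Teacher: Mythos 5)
Your proof is correct and follows essentially the same route as the paper: both arguments rest on the inequality $k_i \le k_{i-1}-n_i+2$ from the proof of Lemma~\ref{c:lem3} together with the bound $s\ge 2t$ from that lemma, the only difference being that you telescope the recurrence uniformly over all $i$, while the paper splits the sum into the contribution $\sum_{n_i\ge 3}(n_i-2)\le 2t$ plus $2\sum_{k} s(k)=8t$. Your bookkeeping in fact yields the marginally sharper bound $\sum_{k} k\cdot s(k)\le 8t$, which still gives the stated chain $\le 10t\le 5s$.
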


\begin{proof} Из доказательства леммы \ref{c:lem3} получаем, что
$\sum\limits_{n_i\geqslant 3} {(n_i-2)} \leqslant 2t$.

По определению
$\sum\limits_{k=1}^\infty {s(k)} =4t$, т.е.
$\sum\limits_{k=1}^\infty {2s(k)} =8t$.

Складывая эти два неравенства и применяя лемму \ref{c:lem3}, получаем доказываемое
неравенство \ref{c:cor}.\end{proof}

\begin{proposition}
Высота слова $W$ будет не больше $$\Psi(n,4n,l)+\sum\limits_{k=1}^\infty {k\cdot s(k)}\leqslant \Psi(n,4n,l)+ 5s.$$
\end{proposition}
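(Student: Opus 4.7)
The plan is to run Algorithm~\ref{c:al} until its precondition fails, invoke Theorem~\ref{c:main1} to bound the residual $W_{4t}$, and convert the combinatorics of the extracted blocks into the sum $\sum_{k}k\cdot s(k)$; Corollary~\ref{c:cor} then yields the final estimate involving $5s$.

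First, I would argue that after exactly $4t$ productive iterations the algorithm halts because $|W_{4t}|<\Psi(n,4n,l)$. The halting condition at step $4t{+}1$ is precisely that Theorem~\ref{c:main1}, applied with $d=4n$, cannot produce a fresh $4n$-periodic subword in the current residual. Since $\Ht(W)>\Phi(n,l)$ precludes $n$-divisibility of $W$, and the removal operations of Algorithm~\ref{c:al} preserve non-$n$-divisibility, $W_{4t}$ is not $n$-divisible either. Hence the only way the hypothesis of Theorem~\ref{c:main1} can fail is through the length bound $|W_{4t}|<\Psi(n,4n,l)$.

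Second, I would decompose $|W|$ into the residual plus the total contribution of the extracted blocks. For each block $f_i$, the decomposition $W=w_0 f_i^{(1)} w_1\cdots f_i^{(n_i)} w_{n_i}$ (fixed at the end of Section~\ref{ch:height:s:sh}) records how $f_i$ is split in $W$ into $n_i$ fragments by the positions of the other $f_j$'s. In the bookkeeping underlying Lemma~\ref{c:lem3} and Corollary~\ref{c:cor}, each fragment $f_i^{(j)}$ contributes a single unit to $|W|-|W_{4t}|$, so summing over $i\in[1,4t]$ yields
\[
|W|-|W_{4t}|\leqslant \sum_{i=1}^{4t}n_i=\sum_{k=1}^{\infty}k\cdot s(k).
\]
Combining this with the residual estimate of the first step and then with Corollary~\ref{c:cor} closes the chain
\[
|W|\leqslant \Psi(n,4n,l)+\sum_{k=1}^{\infty}k\cdot s(k)\leqslant \Psi(n,4n,l)+5s.
\]

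The delicate step is the second one: one has to justify that $|W|-|W_{4t}|$ is really controlled by the fragment count $\sum n_i$ in the sense demanded by the statement, rather than the a priori larger quantity $\sum|f_i|$. This is where the specific convention of Section~\ref{ch:height:s:sh} --- that each fragment $f_i^{(j)}$ is counted as one unit in the length accounting leading up to Lemma~\ref{c:lem3} --- has to be invoked, and where a careful analysis of how fragments of distinct $f_i$'s interleave in $W$ under successive extractions is required.
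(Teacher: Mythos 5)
You have misread what is being bounded. The proposition bounds the \emph{height} $\Ht(W)$ of $W$ over the set of words of length at most $n$ (this is the quantity the whole of Section~\ref{ch:height:s:sh} is chasing, and it is what reappears in the closing estimate ``$\Psi(n,4n,l)+5s$'' after Lemma~\ref{lThick}), not the letter-length $|W|$. Read as a statement about length, your central inequality
$$|W|-|W_{4t}|\leqslant \sum_{i=1}^{4t}n_i$$
is simply false: the two sides are not even of the same order, since each extracted block $f_i=x_{i'}^{4n+r_1+r_2}$ has length at least $4n$, while it contributes only $n_i$ (typically $1$ or $2$) to the right-hand side. You sense this yourself in your last paragraph, but the ``convention that each fragment is counted as one unit'' is not a bookkeeping convention for lengths --- it is the definition of height: a power $v^k$ counts as a single factor in a representation $u_{j_1}^{k_1}\cdots u_{j_r}^{k_r}$ no matter how long it is. There is no way to repair the argument while keeping $|W|$ on the left.

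The intended argument (the paper treats the proposition as immediate and gives no proof) is the following. The original word $W$ is the interleaved concatenation of the residual $W_{4t}$ with the fragments $f_i^{(j)}$, $i\in[1,4t]$, $1\leqslant j\leqslant n_i$. The residual has length less than $\Psi(n,4n,l)$ (otherwise Theorem~\ref{c:main1} with $d=4n$ would let Algorithm~\ref{c:al} continue, since non-$n$-divisibility is preserved --- this part of your first paragraph is fine), and any word of length $L$ trivially has height at most $L$ over words of length $\leqslant n$, so the residual contributes at most $\Psi(n,4n,l)$ to $\Ht(W)$. Each fragment $f_i^{(j)}$ is a subword of the periodic word $x_{i'}^{4n+r_1+r_2}$ and hence enters the height decomposition as a single power of a word; the number of such fragments is $\sum_i n_i=\sum_k k\cdot s(k)$, which Corollary~\ref{c:cor} bounds by $5s$. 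Summing the two contributions gives exactly the stated bound. Your reduction of $\sum_i n_i$ to $\sum_k k\cdot s(k)$ and the appeal to Corollary~\ref{c:cor} are correct; it is only the object being counted that has to be the number of height factors rather than the number of letters.
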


Далее будем рассматривать только $f_i$ с $n_i \leqslant 2$.

\begin{notation}
Если $n_i = 1$, то положим $f'_i := f_i$.

Ежели $n_i = 2$, то положим $f'_i := f_i^{(j)}$, где $f_i^{(j)}$
-- слово с наибольшей длиной между $f_i^{(1)}$ и $f_i^{(2)}$.

Слова $f'_i$ упорядочим в соответствии с их близостью к началу
$W$. Получим последовательность $f'_{m_1},\ldots ,f'_{m_s}$, где
$s'=s(1)+s(2)$, положим $f''_i := f'_{m_i}$. Пусть $f''_i = w'_i
x_{i''}^{p_{i''}}w''_i$, где хотя бы одно из слов $w'_i, w''_i$ --
пустое.
\end{notation}

\begin{remark}  \label{c:pr}
Можно считать, что мы первыми шагами алгоритма \ref{c:al}
выбрали все те $f_i$, для которых $n_i =1$.
\end{remark}

Теперь рассмотрим $z'_j$ --- подслова $W$
следующего вида:
$$z'_j = x_{(2j-1)''}^{p_{(2j-1)''}+\gimel}v_j,
\gimel\geqslant 0, |v_j| = |x_{(2j-1)''}|,$$

при этом $v_j$ не равно $x_{(2j - 1)''}$, начало подслова $z'_j$ совпадает
с началом периодического подслова в $f''_{2j-1}$. Покажем, что
$z'_j$ не пересекаются как подслова слова $W$.

В самом деле. Если $f''_{2j-1} = f_{m_{2j-1}}$, то
$z'_j=f_{m_{2j-1}}v_j$.

Если же $f''_{2j-1}= f_{m_{2j-1}}^{(k)}$, $k = 1,2$, а подслово
$z'_j$ пересекается с подсловом $z'_{j+1}$, то $f''_{2j}\subset z'_i$. Так
 как слова $x_{(2j)''}$ и $x_{(2j-1)''}$ --- нециклические, то
$|x_{(2j)''}| = |x_{(2j-1)''}|$. Но тогда длина периода в $z'_j$
не меньше $4n$, что противоречит замечанию \ref{c:pr}.

Тем самым доказана следующая лемма:
\begin{lemma}\label{lThick}
В слове $W$ с высотой не более $(\Psi(n,4n,l)+ 5s')$ найд\"{е}тся не менее $s'$ непересекающихся периодических подслов, в которых период повторится не менее $2n$ раз. Кроме того, между любыми двумя элементами данного множества периодических подслов найд\"{е}тся подслово длины периода более левого из выбранных элементов.
\end{lemma}

\subsection{Завершение доказательства субэкспоненциальности высоты}
Подставляя в лемму \ref{lThick} вместо числа $s'$ значение $s$ из доказательства теоремы \ref{ThThick} получаем, что
 высота $W$ не больше, чем
$$\Psi(n,4n,l)+ 5s < E_1 l\cdot n^{E_2+12\log_3 n} ,$$ где
$E_1 = 4^{21\log_3 4 + 17}, E_2 = 30\log_3 4 + 10.$

Тем самым мы получили {\bf утверждение
основной теоремы \ref{c:main2}}.


\newpage

\chapter{Оценки кусочной периодичности} \label{ch:pp}

\section{План улучшения оценок существенной высоты}

Далее приводятся оценки на количество периодических подслов с периодом длины $2, 3, (n-1)$ произвольного не $n$-разбиваемого слова $W$. Рассмотрение случая периодов длины $2, 3$ при помощи кодировки обобщается до доказательства ограниченности существенной высоты. Кроме того, получена нижняя оценка на число подслов с периодом $2,$ и эта оценка при достаточно большом $l$ отличается от верхней в 4 раза.

С целью дальнейшего улучшения оценок, полученных в главе \ref{ch:height}, вводятся следующие определения:

\begin{definition} \index{Высота!выборочная!малая}
а) Число $h$ называется {\em малой выборочной высотой} с границей $k$
слова $W$ над множеством слов $Z$, если $h$ --- такое максимальное
число, что у слова $W$ найд\"{е}тся $h$ попарно непересекающихся
циклически несравнимых подслов вида $z^m,$ где $z\in Z, m>k$.

б) Число $h$ называется {\em большой выборочной высотой} с границей $k$ \index{Высота!выборочная!большая}
слова $W$ над множеством слов $Z$, если $h$ --- такое максимальное
число,что у слова $W$ найд\"{е}тся $h$ попарно непересекающихся
подслов вида $z^m,$ где $z\in Z, m>k$, прич\"{е}м соседние подслова из
этой выборки несравнимы.

{\bf Здесь и далее:} $k = 2n$.

в) Множество слов $V$ имеет малую (большую) выборочную высоту $h$ \index{Высота!выборочная!множества}
над некоторым множеством слов $Z$, если $h$ является точной
верхней гранью малых (больших) вы\-бо\-роч\-ных высот над $Z$ его
элементов.
\end{definition}

Затем доказываются следующие нижние и верхние оценки на кусочную периодичность:

\begin{theorem}     \label{verh}
Малая выборочная высота множества не сильно $n$-разбиваемых слов
над $l$-буквенным алфавитом относительно множества нециклических
слов длины $2$ не больше $\beth(2,l,n)$, где
$$\beth(2,l,n) = {(2l-1)(n-1)(n-2)\over 2}.$$
\end{theorem}

\begin{theorem} \label{niz}
Малая выборочная высота множества не сильно $n$-разбиваемых слов
над $l$-буквенным алфавитом относительно множества нециклических
слов длины $2$ при фиксированном $n$ больше, чем $\alpha(n,l)$, где
$$\alpha(n,l) =  {n^2 l\over 2} (1 - o(l)).$$
Более точно, $\alpha(n,l) =(l-2^{n-1})(n-2)(n-3)/2.$
\end{theorem}

 \begin{theorem}     \label{verh1}
Малая выборочная высота множества не сильно $n$-разбиваемых слов
над $l$-буквенным алфавитом относительно множества нециклических
слов длины $3$ не больше $\beth(3,l,n)$, где
$$\beth(3,l,n) = {(2l-1)(n-1)(n-2)}.$$
\end{theorem}

Доказательства теорем \ref{niz}, \ref{verh}, \ref{verh1} изложены в работе \cite{Kh11}.

 \begin{theorem}     \label{verh2}
Малая выборочная высота множества не сильно $n$-разбиваемых слов
над $l$-буквенным алфавитом относительно множества нециклических
слов длины $(n-1)$ не больше $\beth(n-1,l,n)$, где
$$\beth(n-1,l,n) = (l-2)(n-1).$$
\end{theorem}

Теорема \ref{ess:t} с помощью кодировки обобщает теорему \ref{verh} до доказательства ограниченности существенной высоты множества не $n$-разбиваемых слов.

\begin{theorem} \label{ess:t}
Существенная высота $l$-порожд\"{е}нной $\PI$-алгебры с допустимым по\-ли\-но\-ми\-аль\-ным тождеством степени $n$ над множеством слов длины $<n$ меньше, чем $\Upsilon (n, l),$ где
$$\Upsilon (n, l) = 8(l+1)^n n^5(n-1).$$
\end{theorem}

Доказательства теорем \ref{verh2}, \ref{ess:t} изложены в работе \cite{Kh11(2)}.

Малую и большую выборочные высоты связывает следующая теорема:

\begin{theorem} \label{co1}
Большая выборочная высота $l$-порожд\"{е}нной $\PI$-алгебры $A$ с
до\-пус\-ти\-мым полиномиальным тождеством степени $n$ над множеством
нециклических слов длины $k$ меньше $2(n - 1)\beth(k,l,n)$, где $\beth(k,l,n)$ --- малая выборочная высота $A$ над множеством
нециклических слов длины $k$.
\end{theorem}

Как и ранее будем считать, что слова строятся над алфавитом $\gA$ из букв $\{a_1, a_2,\ldots,a_{l}\},$ над которыми введ\"{е}н лексикографический порядок, прич\"{е}м $a_i<a_j$, если $i<j$. Для следующих ниже доказательств будем отождествлять буквы $a_i$ с их индексами $i$ (то есть будем писать не слово $a_ia_j$, а слово $ij$).

\section{Доказательство верхних оценок выборочной высоты}
\subsection{Периоды длины два}

Пусть слово $W$ не сильно $n$-разбиваемо.  Рассмотрим некоторое
множество $\Omega''$ попарно непересекающихся циклических сравнимых подслов
$W$ вида $z^m$, где $m>2n$, $z$~---~нециклическое двухбуквенное
слово. Будем называть элементы этого множества {\it представителями,}
имея в виду, что эти элементы являются представителями раз\-лич\-ных
классов эквивалентности по сильной сравнимости. Пусть набралось
$t$ таких пред\-ста\-ви\-те\-лей. Пронумеруем их всех в порядке положения
в слове $W$ (первое --- самое левое) числами от $1$ до $t$. В
каждом выбранном представителе в качестве подслов содержатся ровно
два различных двухбуквенных слова.

Введ\"{е}м порядок на этих словах следующим образом: $u\prec v$, если
\begin{itemize}
    \item $u$ лексикографически меньше $v$,
    \item представитель, содержащий $u$
левее представителя, содержащего $v$.
\end{itemize}
Из не сильной
$n$-разбиваемости получаем, что максимальное возможное число
попарно несравнимых элементов равно $(n-1)$. По теореме Дилуорса
существует разбиение рассматриваемых двухбуквенных слов на $(n-1)$
цепь. Раскрасим слова в $(n-1)$ цвет в соответствии с их
принадлежностью к цепям.

Введ\"{е}м соответствие между следующими четырьмя
объектами:
\begin{itemize}
    \item натуральными числами от $1$ до $t$,
    \item классами эквивалентности по сильной сравнимости,
    \item содержащимися в классах эквивалентности по сильной сравнимости цик\-ли\-чес\-ки\-ми словами
длины $2$,
    \item парами цветов, в которые
раскрашены сдвиги по циклу этого слово-цикла.\index{Слово!-цикл}
\end{itemize}

Буквы слово-цикла раскрасим в цвета, в которые раскрашены сдвиги по циклу, начинающиеся с этих цветов.

Рассмотрим граф $\Gamma$ \index{Граф!подслов!длины два} с вершинами вида $(k,i)$, где $0<k<n$ и $0<i
\leqslant l$. Первая координата соответствует цвету, а вторая --- букве. Две вершины $(k_1,i_1), (k_2,i_2)$ со\-е\-ди\-ня\-ют\-ся
{\it ребром с весом  $j,$} если
\begin{itemize}
    \item в $j$-ом представителе содержится
слово-цикл из букв $i_1, i_2$,
    \item буквы $j$-го представителя раскрашены в
цвета $k_1, k_2$ соответственно.
\end{itemize}


Посчитаем число р\"{е}бер между вершинами вида $(k_1,i_1)$ и вершинами
вида $(k_2,i_2)$, где $k_1, k_2$ --- фиксированы, $i_1, i_2$ --
произвольны. Рассмотрим два ребра $l_1$ и $l_2$ из рассматриваемого множества с
весами $j_1 < j_2$ с концами в некоторых вершинах $A=(k_1, i_{1_1}),
B=(k_2,i_{2_1})$ и $C=(k_1,i_{1_2}), D=(k_2,i_{2_2})$ соответственно.
Тогда по построению одновременно выполняются неравенства $ i_{1_1}\leq i_{1_2}, i_{2_1}\leq  i_{2_2}.$
При этом, так как рассматриваются представители классов
эквивалентности по сильной срав\-ни\-мос\-ти, одно из неравенств
строгое. Значит, $i_{1_1}+i_{2_1} < i_{1_2}+i_{2_2}.$
Так как вторые координаты вершин ограничены числом $l$,
то вычисляемое число р\"{е}бер будет не более $(2l-1)$.

Так как первая координата вершин меньше $n$, то всего р\"{е}бер в
графе будет не более $(2l-1)(n-1)(n-2)/2$. Таким образом, теорема
\ref{verh} доказана.

\subsection{Периоды длины три}

Пусть слово $W$ не сильно
$n$-разбиваемо.  Рассмотрим некоторое множество попарно
непересекающихся циклических несравнимых подслов слова $W$ вида $z^m$,
где $m>2n$, $z$ --- нециклическое тр\"{е}хбуквенное слово. Будем называть
элементы этого множества представителями, имея в виду, что эти
элементы являются представителями различных классов
эквивалентности по сильной сравнимости. Пусть набралось $t$
таких пред\-ста\-ви\-те\-лей. Пронумеруем их всех в порядке положения в
слове $W$ (первое --- ближе всех к началу слова) числами от $1$ до $t$. В каждом
выбранном представителе в качестве подслов содержатся ровно три
различных тр\"{е}хбуквенных слова.

Введ\"{е}м порядок на этих словах следующим образом:

$u\prec v$, если
\begin{itemize}
    \item $u$ лексикографически меньше $v$,
    \item представитель, содержащий $u$,
левее представителя, содержащего $v$.
\end{itemize}
 Из не сильной
$n$-разбиваемости получаем, что максимальное возможное число
попарно несравнимых элементов равно $n-1$. По теореме Дилуорса
существует разбиение рассматриваемых тр\"{е}хбуквенных слов на $(n -
1)$ цепь. Раскрасим слова в $(n-1)$ цвет в соответствии с их
принадлежностью к цепям.

Можно заметить, что до этого момента доказательство теоремы \ref{verh1} практически полностью повторяет доказательство теоремы \ref{verh}. Однако для дальнейшего доказательства необходимо использовать ориентированный
аналог графа $\Gamma,$ который вводится далее.

Рассмотрим теперь уже ориентированный граф $G$ \index{Граф!подслов!длины три} с вершинами вида $(k,i)$, где
$0 < k < n$ и $0 < i \leqslant l$. Первая координата обозначает цвет, а вторая --- букву.
{\it Ребро с некоторым весом $j$}
выходит из $(k_1, i_1)$ в $(k_2, i_2)$, если для некоторых $i_3, k_3$
\begin{itemize}
    \item в $j$-ом представителе содержится слово-цикл $i_1 i_2 i_3$,
    \item буквы $i_1, i_2, i_3$ $j$-го представителя раскрашены в цвета $k_1, k_2, k_3$ соответственно.
\end{itemize}
Таким образом, граф $G$ состоит из ориентированных треугольников с
р\"{е}брами одинакового веса. Однако в отличие от графа $\Gamma$ из доказательства теоремы \ref{verh}, могут
появляться кратные р\"{е}бра, то есть р\"{е}бра с общими началом и концом, но разным весом. Для дальнейшего доказательства нам
потребуется следующая

\begin{lemma}[Основная] \label{ess:lB}
Пусть $A$, $B$ и $C$ --- вершины графа $G$, $A\to B\to C\to A$ --
ориентированный треугольник с р\"{е}брами некоторого веса $j$, кроме того,
су\-щест\-ву\-ют другие р\"{е}бра $A\to B, B\to C, C\to A$ с весами $a, b,
c$ соответственно. Тогда среди $a, b, c$ есть число, большее $j$.
\end{lemma}

\begin{proof} От противного. Если два числа из набора $a, b, c$ равны друг другу,
то  $a = b = c = j$, так как в противном случае есть 2 треугольника $A\to B\to C\to A$, в каждом из которых веса всех тр\"{е}х р\"{е}бер совпадают между собой. Тогда в $\Omega''$ есть два не сильно сравнимых слова, что противоречит определению $\Omega''.$ Без ограничения общности, что $a$ наибольшее из чисел
$a, b и с$. Рассмотрим треугольник из р\"{е}бер веса $a.$ Этот треугольник будет иметь общую с $\triangle ABC$ сторону $AB$ и некоторую третью вершину $C'.$ Если вторая координата вершины $C'$ совпадает со второй координатой вершины $C$ (то есть совпали соответствующие $C$ и $C'$ буквы алфавита), то $\triangle ABC$ и $\triangle ABC'$ соответствуют не сильно сравнимым словам из множества $\Omega''.$ Снова получено противоречие с определением множества $\Omega''.$ По предположению $a<j,$ а значит, из монотонности цвета $k_A$ (первой координаты вершины $A$) слово $i_A i_B i_{C'},$ составленное из вторых координат вершин $A, B, C'$ соответственно, лексикографически меньше слова $i_A i_B i_C.$
Значит, $i_{C'} < i_C.$ Тогда слово $i_B i_{C'}$ лексикографически меньше слова $i_B i_{C}.$ Из монотонности цвета $k_B$ получаем, что $b>a.$ Противоречие.
\end{proof}
\medskip

\subsection{Завершение доказательства теоремы \ref{verh1} }

Рассмотрим теперь граф $G_1$, полученный из графа $G$ заменой
между каждыми двумя вершинами кратных р\"{е}бер на ребро с наименьшим
весом. Тогда по лемме \ref{ess:lB} в графе $G_1$ встретятся р\"{е}бра всех весов от 1 до
$t$.

Посчитаем число р\"{е}бер из вершин вида $(k_1, i_1)$ в вершины вида
$(k_2, i_2)$, где $k_1, k_2$ фиксированы, $i_1, i_2$
произвольны. Рассмотрим два ребра из рассматриваемого мно\-жест\-ва с
весами $j_1 < j_2$ с концами в некоторых вершинах $(k_1, i_{1_1}),
(k_2, i_{2_1})$ и $(k_1,i_{1_2}),(k_2,i_{2_2})$ соответственно.
Тогда по построению $ i_{1_1}\leqslant i_{1_2}, i_{2_1}\leqslant i_{2_2}$,
прич\"{е}м, так как рас\-смат\-ри\-ва\-ют\-ся представители классов
эквивалентности по сильной сравнимости, одно из неравенств
строгое. Так как вторые координаты вершин ограничены числом $l$,
то вычисляемое число р\"{е}бер будет не более $2l-1$.

Так как первая координата вершин меньше $n$, то всего р\"{е}бер в
графе будет не более $(2l-1)(n-1)(n-2)$. Таким образом, теорема
\ref{verh1} доказана.

\subsection{Периоды длины, близкой к степени тождества в алгебре} \label{ss:big_periods}

Пусть слово $W$ не $n$-разбиваемо. Как и прежде, рассмотрим некоторое множество попарно непересекающихся несравнимых подслов слова $W$ вида $z^m,$ где $m>2n,$ $z$ --- $(n-1)$-буквенное нециклическое слово.  Будем называть
элементы этого множества {\it представителями}, имея в виду, что эти
элементы являются представителями раз\-лич\-ных классов
эквивалентности по сильной сравнимости. Пусть набралось $t$
таких предс\-та\-ви\-те\-лей. Пронумеруем их всех в порядке положения в
слове $W$ (первое --- ближе всех к началу слова) числами от $1$ до $t$. В каждом выбранном представителе в качестве подслов содержатся ровно $(n-1)$ различное $(n-1)$-буквенное слово.

Введ\"{е}м порядок на этих словах следующим образом: $u\prec v$, если
\begin{itemize}
    \item $u$ лексикографически меньше $v$;
    \item представитель, содержащий $u$ левее представителя, содержащего $v$.
\end{itemize}
Из не сильной
$n$-разбиваемости получаем, что максимальное возможное число
попарно несравнимых элементов равно $n-1$. По теореме Дилуорса существует разбиение рассматриваемых $(n-1)$-буквенных слов на $(n-1)$ цепь. Раскрасим слова в $(n-1)$ цвет в соответствии с их принадлежностью к цепям. Раскрасим позиции, с которых начинаются слова, в те же цвета, что и соответствующие слова.

Рассмотрим ориентированный граф $G$ с вершинами вида $(k,i)$, где \index{Граф!подслов!большой длины}
$0 < k < n$ и $0 < i \leqslant l$. Первая координата обозначает цвет, а вторая --- букву.

Ребро с некоторым весом $j$
выходит из $(k_1, i_1)$ в $(k_2, i_2)$, если
\begin{itemize}
    \item для некоторых $i_3,i_4,\ldots,i_{n-1}$ в $j$-ом представителе содержится слово-цикл $i_1i_2\cdots i_{n-1}$;
    \item позиции, на которых стоят буквы $i_1, i_2$ раскрашены в цвета $k_1, k_2$ соответственно.
\end{itemize}
Таким образом, граф $G$ состоит из ориентированных циклов длины $(n-1)$ с р\"{е}брами одинакового веса. Теперь нам требуется найти показатель, который бы строго монотонно рос с появлением каждого нового представителя при движении от начала к концу слова $W.$ В теореме \ref{verh1} таким показателем было число несократимых р\"{е}бер графа $G.$ В доказательстве теоремы \ref{verh2} будет рассматриваться сумма вторых координат неизолированных вершин графа $G.$ Нам потребуется следующая

\begin{lemma}[Основная]\label{n-1:l}
Пусть $A_1, A_2,\ldots,A_{n-1}$ --- вершины графа $G,$ $A_1\to A_2\to\cdots\to A_{n-1}\to A_1$ --- ориентированный цикл длины $(n-1)$ с р\"{е}брами некоторого веса $j.$ Тогда не найд\"{е}тся другого цикла между вершинами $A_1, A_2,\ldots,A_{n-1}$ одного веса.
\end{lemma}
 \begin{proof}От противного. Рассмотрим наименьшее число $j$, для которого наш\"{е}лся другой одноцветный цикл между вершинами цикла цвета $j.$ В силу минимальности $j$ можно считать, что этот цикл имеет цвет $k>j.$ Пусть цикл цвета $k$ имеет вид $A_{j_1}, A_{j_2},\ldots,A_{j_{n-1}},$ где ${\{j_p\}}_{p=1}^{n-1} = \{1, 2,\ldots,n-1\}.$ Пусть $(k_j, i_j)$ --- координата вершины $A_j.$ Рассмотрим наименьшее число $q\in \mathbb N$ такое, что для некоторого $r$ слово
$i_{j_r}i_{j_{r+1}}\cdots i_{j_{r+q-1}}$ лек\-си\-ко\-гра\-фи\-чес\-ки больше слова $i_{j_r}i_{j_r+1}\cdots i_{j_r+q-1}$ (здесь и далее сложение нижних индексов происходит по модулю $(n-1)$). Такое $q$ существует, так как слова $i_1i_2\cdots i_{n-1}$ и $i_{j_1}i_{j_2}\cdots i_{j_{n-1}}$  сильно сравнимы. Кроме того, в силу совпадения множеств  ${\{j_p\}}_{p=1}^{n-1}$ и $ \{1, 2,\ldots,n-1\}$ получаем, что $q\geqslant 2.$ Так как $q$ --- наименьшее, то для любого $ s<q,$ любого $r$ имеем $ i_{j_r}i_{j_{r+1}}\cdots i_{j_{r+s-1}} = i_{j_r}i_{j_r+1}\cdots i_{j_r+s-1}.$ Тогда для любого $ s<q,$ любого $r$ имеем $ i_{j_{r+s-1}}= i_{j_r+s-1}.$ Из монотонности слов каждого цвета получаем, что для любого $r$ $ i_{j_r}i_{j_{r+1}}\cdots i_{j_{r+q-1}}$ не больше $i_{j_r}i_{j_r+1}\cdots i_{j_r+q-1}.$ Значит, для любого $r$ верно неравенство $i_{j_{r+q-1}}\geqslant i_{j_r+q-1}.$ По предположению найд\"{е}тся такое $r$, что $i_{j_{r+q-1}}> i_{j_r+q-1}.$ Так как обе последовательности ${\{j_{r+q-1}\}}_{к=1}^{n-1}$ и  ${\{j_r+q-1\}}_{к=1}^{n-1}$ пробегают элементы множества чисел $\{1, 2,\ldots,n-1\}$ по одному разу, то $\sum\limits_{r=1}^{n-1}j_{r+q-1} = \sum\limits_{r=1}^{n-1}(j_r+q-1)$ (при вычислении числа $j_r+q-1$ суммирование также проходит по модулю $(n-1)$). Но мы получили $\sum\limits_{r=1}^{n-1}j_{r+q-1} > \sum\limits_{r=1}^{n-1}(j_r+q-1).$ Противоречие. \end{proof}
\medskip

\subsection{Завершение доказательства теоремы \ref{verh2}}

Для произвольного $j$ рассмотрим циклы длины $(n-1)$ весов $j$ и $j+1$ для некоторого $j.$ Из основной леммы \ref{n-1:l} найдутся числа $k, i$ такие, что вершина $(k,i)$ входит в цикл веса $(j+1),$ но не входит в цикл веса $j.$ Пусть цикл веса $j$ состоит из вершин вида $(k, i_{(j,k)}),$ где $k=1,2,\ldots,n-1.$ Введ\"{е}м величину $\pi(j) = \sum\limits_{k=1}^{n-1}i_{(j,k)}.$ Тогда из основной леммы \ref{n-1:l} и монотонности слов по цветам получаем, что $\pi(j+1)\geqslant\pi(j)+1.$ Так как рассматриваемые периоды не циклические, то найд\"{е}тся $k$ такое, что $i_{(1,k)}>1.$ Значит, $\pi(1)>n-1.$ $\forall j:i_{(j,k)}\leqslant l-1,$ а значит, $\pi(j)\leqslant (l-1)(n-1).$ Следовательно, $j\leqslant (l-2)(n-1).$ Значит, $t\leqslant (l-2)(n-1).$ Тем самым, теорема \ref{verh2} доказана.

Представленная при доказательстве теоремы \ref{verh2} техника позволяет доказать следующий факт:

\begin{proposition}
Малая выборочная высота множества не сильно $n$-разбиваемых слов
над $l$-буквенным алфавитом относительно множества нециклических
слов длины $(n-c)$ не больше $D(c) n^c l,$ где $D(c)$ --- некоторая функция, зависящая от $c$.
\end{proposition}

\section{Нижняя оценка малой выборочной высоты над периодами длины два}

Привед\"{е}м пример. Из
формулировки этой теоремы следует, что можно положить $l$ сколь
угодно большим. Будем считать, что $l>2^{n-1}$. Мы воспользуемся кон\-струк\-ци\-я\-ми, принятыми в
доказательстве теоремы \ref{verh}. Таким образом, процесс
построения примера сводится к построению р\"{е}бер в графе на $l$
вершинах. Разобь\"{е}м этот процесс на несколько больших шагов. Пусть $i$ --- натуральное число от 1 до $(l - 2^{n - 1})$.
Пусть
на $i$-ом большом шаге в привед\"{е}нном ниже порядке соединяются
р\"{е}брами следующие пары вершин:\\
$(i,2^{n-2}+i)$,\\
$(i,2^{n-2}+2^{n-3}+i),(2^{n-2}+i,2^{n-2}+2^{n-3}+i)$,\\
$(i,2^{n-2}+2^{n-3}+2^{n-4}+i),(2^{n-2}+i,2^{n-2}+2^{n-3}+2^{n-4}+i),\\
(2^{n-2}+2^{n-3}+i,2^{n-2}+2^{n-3}+2^{n-4}+i),\ldots,$\\
$(i,2^{n-2}+\ldots+2+1+i),\ldots,(2^{n-2}+\ldots+2+i,2^{n-2}+\ldots+2+1+i), $\\
 где $i=2,3,\ldots, l-2^{n-1}+1.$\\
 При этом:
\begin{enumerate}
\item Никакое ребро не будет подсчитано 2 раза, так как вершина
соединена р\"{е}брами только с вершинами, значения в которых
отличаются от значения в выбранной вершине на неповторяющуюся
сумму степеней двойки.

\item Пусть {\it вершина типа} $(k,i)$ --- вершина, которая на $i$-ом
шаге соединяется с $k$ вершинами, значения в которых меньше
значения е\"{е} самой. Для всех $i$ будут вершины типов $(0,i),
(1,i)\ldots,(n-2,i)$.

Раскрасим в $k$-ый цвет слова, которые для некоторого $i$
начинаются с буквы типа $(k,i)$ и заканчиваются в буквах, с
которыми вершина типа $(k, i)$ со\-е\-ди\-ня\-ет\-ся р\"{е}брами на $i$-ом
большом шаге. Получена корректная раскраска в $(n-1)$ цвет, а
значит, слово сильно $n$-разбиваемо.

\item На $i$-ом большом шаге осуществляется $(n-2)(n-3)\over 2$
шагов. Значит, $$q=(l-2^{n-1})(n-2)(n-3)/2,$$ где $q$ --- количество р\"{е}бер в графе $\Gamma.$
\end{enumerate}
Тем самым, теорема \ref{niz} доказана.

\section{Оценка существенной высоты с помощью выборочной высоты} \label{s:cod}
Из рассмотрения случая периодов длины 2 с помощью кодировки букв можно получить оценку на существенную высоту, которая будет расти полиномиально по числу об\-ра\-зу\-ю\-щих и экспоненциально по степени тождества. Для этого надо обобщить некоторые понятия, введ\"{е}нные ранее. Заметим, что механизм кодировки букв представляется перспективным для обобщения оценок на высоту, полученных при конкретном значении одного из параметров (в данном случае --- ограничение длины слов в базисе Ширшова).

\begin{construction}
Рассмотрим алфавит $\gA$ с буквами $\{a_1, a_2,\ldots, a_l\}.$ Введ\"{е}м на буквах лексикографический порядок: $a_i>a_j,$ если $i>j.$ Рассмотрим произвольное множество нециклических попарно сильно сравнимых слово-циклов некоторой оди\-на\-ко\-вой длины $t.$ Пронумеруем элементы этого множества натуральными числами, начиная с 1. Введ\"{е}м порядок на словах, входящих в слово-цикл, следующим образом:
$u\prec v,$ если:
\begin{enumerate}
    \item слово $u$ --- лексикографически меньше слова $v,$
    \item слово-цикл, содержащий слово $u,$ имеет меньший номер, чем слово-цикл, со\-дер\-жа\-щий слово $v.$
\end{enumerate}
Пронумеруем теперь позиции букв в слово-циклах числами от 1 до $t$ от начала к концу некоторого слова, входящего в слово-цикл.
\begin{notation}
\begin{enumerate}
    \item Пусть $w(i, j)$ --- слово длины $t$, которое начинается с $j$-ой буквы в $i$-ом слово-цикле.
    \item Пусть класс $X(t, l)$ --- рассматриваемое множество слово-циклов с введ\"{е}нным на его словах порядком $\prec.$
\end{enumerate}
\end{notation}

\end{construction}

\begin{definition}
Назов\"{е}м те классы $X,$ в которых не найд\"{е}тся антицепи длины $n$, --- {\em $n$-светлыми.} Соответственно, те, в которых найд\"{е}тся такая антицепь --- {\em $n$-т\"{е}мными.}
\end{definition}

Из теоремы Дилуорса получаем, что слова в $n$-светлых классах $X$ можно рас\-кра\-сить в $(n-1)$ цвет, так что одноцветные слова образуют цепь. Далее требуется оценить число элементов в $n$-светлых классах $X$.

\begin{definition}
Пусть $\beth(t, l, n)$ --- наибольшее возможное число элементов в $n$-светлом классе $X(t, l).$
\end{definition}
\begin{remark}\label{ess:r}
Здесь и далее первый аргумент в функции $\beth(\cdot, \cdot, \cdot)$ меньше третьего.
\end{remark}
Следующая лемма позволяет оценить $\beth(t, l, n)$ через случаи малых периодов.
\begin{lemma}\label{ess:l1}
$\beth(t, l^2, n)\geqslant\beth(2t, l, n)$
\end{lemma}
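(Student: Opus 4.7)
The plan is to embed the combinatorial structure witnessing $\beth(2t,l,n)$ into the ambient set for $\beth(t,l^2,n)$ via a pair-encoding of letters. First, I would identify the alphabet of size $l^2$ with the ordered pairs $\{(a_i,a_j): 1\le i,j\le l\}$ of letters from $\gA$, equipped with the lexicographic order induced from the order on $\gA$, so that $(a_i,a_j)\prec(a_{i'},a_{j'})$ iff $i<i'$, or $i=i'$ and $j<j'$. Denote this alphabet by $\gA'$.

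Next, given an $n$-thick configuration in $X(2t,l)$ achieving $N=\beth(2t,l,n)$ hanging edges, I would associate to each cyclic word $w=a_{i_1}a_{i_2}\cdots a_{i_{2t}}$ the two cyclic words of length $t$ over $\gA'$ obtained by pairing consecutive letters starting at an odd and at an even position respectively:
\[
\phi_1(w) = (a_{i_1},a_{i_2})(a_{i_3},a_{i_4})\cdots (a_{i_{2t-1}},a_{i_{2t}}),\qquad
\phi_2(w) = (a_{i_2},a_{i_3})(a_{i_4},a_{i_5})\cdots (a_{i_{2t}},a_{i_1}).
\]
Each hanging edge of $w$ at an odd (resp.\ even) position would be sent to the corresponding hanging edge of $\phi_1(w)$ (resp.\ $\phi_2(w)$). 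I would then carry over the numbering of vertices so that, restricted to the image, the order $\prec$ on $X(t,l^2)$ coincides with the original order on $X(2t,l)$.

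The critical step is to verify that the encoding preserves the comparison $\prec$. This reduces to the elementary observation that for two words of equal even length over $\gA$, lexicographic comparison agrees with lex comparison of their pair-encodings under the chosen order on $\gA'$. It follows that any antichain of length $n$ in the image configuration pulls back to an antichain of length $n$ in the source, so the $n$-thickness is preserved. Since each of the $N$ hanging edges of the source contributes a distinct hanging edge in either the $\phi_1$- or $\phi_2$-copy, we obtain $\beth(t,l^2,n)\ge N$.

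The main obstacle will be bookkeeping around cyclic rotations: one must ensure that different rotations of a single cyclic word of length $2t$ are not collapsed by $\phi_1,\phi_2$ in a way that loses hanging edges, and that the numbering of vertices in $X(t,l^2)$ can be chosen compatibly with the second clause in the definition of $\prec$. Once these details are handled, the inequality $\beth(t,l^2,n)\ge\beth(2t,l,n)$ follows.
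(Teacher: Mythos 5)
Your central construction coincides with the paper's: identify the alphabet of size $l^2$ with ordered pairs of letters of $\gA$ under the lexicographic order, replace each pair of adjacent letters of a length-$2t$ proto-word by the corresponding new letter, and use the observation that lexicographic comparison of words of equal length commutes with this pair-encoding, so that an antichain of length $n$ in the image pulls back to an antichain of length $n$ in the source. That pullback step is exactly the heart of the paper's proof, and your worry about the numbering of proto-words is handled simply by transporting the numbering along the bijection between source and image proto-words.

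There is, however, a genuine mismatch between what you bound and what the lemma asserts. In this section $\beth(t,l,n)$ is (re)defined as the maximal possible number of \emph{proto-words} in an $n$-thin set $X(t,l)$, i.e. a set of cyclic words containing no antichain of length $n$; it is not a count of hanging edges, and no graph is involved here. The correct argument starts from an extremal $n$-thin set $X(2t,l)$ containing $\beth(2t,l,n)$ proto-words, applies \emph{one} fixed pairing to each proto-word --- so the number of proto-words is preserved --- and concludes via the pullback of antichains that the resulting $X(t,l^2)$ is still $n$-thin, whence $\beth(t,l^2,n)\geqslant\beth(2t,l,n)$. Your version instead starts from an ``$n$-thick configuration'', counts ``hanging edges'', and produces two images $\phi_1(w),\phi_2(w)$ of each word; this bookkeeping does not connect to the quantity $\beth$ being bounded, and the phrase ``the $n$-thickness is preserved'' inverts the property you actually need (you must preserve the \emph{absence} of length-$n$ antichains, not their presence; moreover, if you really inserted both $\phi_1(w)$ and $\phi_2(w)$ as separate proto-words you would have to re-verify thinness for antichains mixing the two phases, which you do not do). The two-phase encoding is unnecessary: a single pairing per proto-word already gives the required size-preserving, thinness-preserving map. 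Once the hanging-edge count is replaced by the proto-word count and the thin/thick inversion is fixed, your argument becomes the paper's.
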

 \begin{proof}Рассмотрим $n$-светлый класс $X(2t, l).$ Разобь\"{е}м во всех его слово-циклах позиции на пары соседних так, чтобы каждая позиция попала ровно в одну пару. Затем рассмотрим алфавит $\gB$ с буквами $\{b_{i,j}\}_{i,j=1}^l,$ прич\"{е}м $b_{i_1, j_1}>b_{i_2,j_2}, $ если $i_1\cdot l+j_1>i_2\cdot l+j_2.$ Алфавит $\gB$ состоит из $l^2$ букв. Каждая пара позиций из разбиения состоит из некоторых букв $a_i, a_j$. Заменим пару букв $a_i, a_j$ буквой $b_{i,j}.$ Поступая так с каждой парой, получаем новый класс $X(t, l^2).$ Он будет $n$-светлым, так как если в классе $X(t, l^2)$ есть антицепь длины $n$ из слов $w(i_1, j_1), w(i_2, j_2),\ldots,w(i_n, j_n),$ то следует рассматривать прообразы слов $w(i_1, j_1), w(i_2, j_2),\ldots,w(i_n, j_n)$ в первоначально взятом классе $X(2t, l).$ Пусть эти прообразы --- слова $w(i_1, j'_1), w(i_2, j'_2),\ldots,w(i_n, j'_n)$. Тогда слова $w(i_1, j'_1), w(i_2, j'_2),\ldots,w(i_n, j'_n)$ образуют в классе $X(2t, l)$ антицепь длины $n$. Получено противоречие с тем, что класс $X(2t, l)$ --- $n$-светлый. Тем самым, лемма доказана.\end{proof}

Теперь оценим $\beth(t, l, n)$ через случаи малых алфавитов.

\begin{lemma}
$\beth(t, l^2, n)\leqslant\beth(2t, l, 2n-1)$
\end{lemma}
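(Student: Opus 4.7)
The plan is to mirror, in reverse, the construction from Lemma~\ref{ess:l1}. Given an $n$-good configuration $X(t, l^2)$ achieving the maximum $\beth(t, l^2, n)$, I would produce a configuration with alphabet $\{a_1, \ldots, a_l\}$ by unfolding: replace each letter $b_{i,j}$ of $X(t, l^2)$ by the pair of adjacent letters $a_i a_j$ (so every sub-word doubles in length), and then split each doubled sub-word at its midpoint into two sub-words. This yields a set $X(2t, l)$ whose total number of letter positions is exactly twice that of $X(t, l^2)$. The desired inequality will then follow from the claim that $X(2t, l)$ is $(2n-1)$-good, since that gives $\beth(2t, l, 2n-1) \geq 2\,\beth(t, l^2, n) \geq \beth(t, l^2, n)$.

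To prove $(2n-1)$-goodness I argue by contradiction. Suppose there is a chain $u_1 \prec u_2 \prec \cdots \prec u_{2n-1}$ in $X(2t, l)$, with $u_k$ in new sub-word $s_k$ and $s_1 < s_2 < \cdots < s_{2n-1}$. Each $u_k$ determines a unique original letter $B_k = b_{p_k, q_k}$ lying in original sub-word $I_k = \lceil s_k / 2 \rceil$: both components $a_{p_k}, a_{q_k}$ of a single $B_k$ land in the same new sub-word (either the first half or the second half of the unfolded original sub-word), and a chain uses at most one element per new sub-word. Consequently the $B_k$ are pairwise distinct, the sequence $(I_k)$ is non-decreasing with each value repeated at most twice, and the chain visits at least $n$ distinct original sub-words. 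The task is to extract $n$ of the $B_k$ in distinct original sub-words whose $(p, q)$ labels are lex-strictly increasing, which would be an $n$-chain in $X(t, l^2)$ and contradict its $n$-goodness.

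For the extraction one classifies each $u_k$ by two binary attributes: the half of the original sub-word it came from, and whether it is a $p$-component or a $q$-component of $B_k$. Group the chain into maximal blocks of consecutive members sharing the same $I_k$; each block has size $1$ or $2$, and there are at least $n$ blocks. Using the strict inequalities $r_k < r_{k+1}$ on subscripts across block boundaries, together with the freedom to pick either representative inside a size-$2$ block, one chooses one $B$ per block so that consecutive chosen $B$s are in lex order. The main obstacle is the case in which many chain members are $q$-components, because a strictly increasing sequence of $q$-values in distinct sub-words does not in itself give a lex-chain; the additional $n-1$ members of slack in $2n-1$ (beyond the $n$ distinct original sub-words) must be spent precisely to force the needed inequality on $p$-coordinates via the block-transition analysis. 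Once this combinatorial bookkeeping is carried out, the lex-chain of length $n$ in $X(t, l^2)$ is obtained, contradicting $n$-goodness and completing the proof.
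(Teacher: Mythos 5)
There is a genuine gap, and it begins with the construction itself. Your step ``unfold and then split each doubled sub-word at its midpoint into two sub-words'' does not produce a set of type $X(2t,l)$: cutting a length-$2t$ cycle in half gives two pieces of length $t$, so you would land in $X(t,l)$, and the cyclic words $w(i,j)$ are destroyed in the process. The factor of $2$ in your claimed bound $\beth(2t,l,2n-1)\geqslant 2\,\beth(t,l^2,n)$ rests entirely on this spurious doubling of the number of elements. What Lemma~\ref{ess:l1} (and the paper's proof of the present lemma) actually does is partition the \emph{letters} of each length-$2t$ cycle into adjacent pairs and replace each pair $a_ia_j$ by the single letter $b_{i,j}\in\gB$, keeping one cycle per original cycle; nothing is split, and the two sets in the correspondence have the \emph{same} number of elements. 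Accordingly, your bookkeeping with ``blocks of size $1$ or $2$ sharing the same $I_k$'' has no counterpart in a correct argument: along a chain the cycle indices increase strictly, so the $2n-1$ chain members already lie in $2n-1$ distinct cycles.

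More seriously, your reduction target is unattainable. You propose to extract $n$ letters $B_k=b_{p_k,q_k}$ in distinct cycles with \emph{strictly} lex-increasing labels and call that an $n$-chain. Such a configuration would indeed yield a chain, but the hypothesis only gives length-$2t$ words that increase lexicographically \emph{as whole words}; their first letters are merely non-decreasing. If every word of the $(2n-1)$-chain begins with $a_1a_1$ (for instance $u_k=a_1^{2t-k}a_2^{k}$), then all your $B_k$ equal $b_{1,1}$ and no two labels are strictly comparable --- yet the folded length-$t$ words over $\gB$ still form a chain, because the comparison must be carried out on the full words with ties broken deeper in. So the ``combinatorial bookkeeping'' you defer at the end cannot be carried out as stated. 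The paper spends the slack $2n-1=n+(n-1)$ quite differently: among the $2n-1$ chain members at least $n$ start at positions of the same parity; one then aligns the pairing of adjacent letters with that parity (shifting every pair by one position if needed), and those $n$ words fold directly, as whole words, into an $n$-chain of the resulting $X(t,l^2)$. You correctly identify the obstacle (chain members starting at the second component of a pair), but neither the parity pigeonhole nor the realignment of the pairing appears in your argument, and without them the proof does not close.
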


 \begin{proof}Рассмотрим $(2n-1)$-т\"{е}мный класс $X(2t, l).$ Можно считать, что $n$ слов из антицепи, а именно $w(i_1, j_1), w(i_2, j_2),\ldots,w(i_n, j_n)$, начинаются с неч\"{е}тных позиций слово-циклов. Разобь\"{е}м во всех его слово-циклах позиции на пары соседних так, чтобы каждая позиция попала ровно в одну пару и первая позиция в каждой паре была неч\"{е}тной. Затем рассмотрим алфавит $\gB$ с буквами $\{b_{i,j}\}_{i,j=1}^l,$ прич\"{е}м\linebreak $b_{i_1, j_1}>b_{i_2,j_2}, $ если $i_1\cdot l+j_1>i_2\cdot l+j_2.$ Алфавит $\gB$ состоит из $l^2$ букв. Каждая пара позиций из разбиения состоит из некоторых букв $a_i, a_j$. Заменим пару букв $a_i, a_j$ буквой $b_{i,j}.$ Поступая так с каждой парой, получаем новый класс $X(t, l^2).$ Пусть слова $w(i_1, j_1), w(i_2, j_2),\ldots,w(i_n, j_n)$ перешли в слова $w(i_1, j'_1), w(i_2, j'_2),\ldots,w(i_n, j'_n).$ Эти слова будут образовывать антицепь длины $n$ в классе $X(t, l^2).$ Таким образом, получен $n$-т\"{е}мный класс $X(t, l^2)$ с тем же числом элементов, что и $(2n-1)$-т\"{е}мный класс $X(2t, l).$ Тем самым, лемма доказана.\end{proof}

Для дальнейшего рассуждения необходимо связать $\beth(t, l, n)$ для произвольного первого аргумента и для первого аргумента, равного степени двойки.

\begin{lemma}\label{ess:l3}
$\beth(t, l, n)\leqslant\beth(2^s, l+1, 2^s (n-1)+1),$ где $s = \ulcorner\log_2(t)\urcorner.$
\end{lemma}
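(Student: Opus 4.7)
The plan is to give a direct construction: starting from any $n$-thin realization of $X(t,l)$ attaining $\beth(t,l,n)$ distinct letters, I would build a $\bigl(2^s(n-1)+1\bigr)$-thin realization of $X(2^s,l+1)$ with at least as many distinct letters. The required inequality then follows immediately, and this mirrors the pattern of Lemmas~\ref{ess:l1} and of the following monotonicity lemma in Section~\ref{s:cod} where extra letters are traded against a coarsening of the clique threshold.

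Since $t\le 2^s$, I would fix an injection $i\mapsto\sigma_i$ from $\{1,\dots,t\}$ into $\{0,1\}^s$ and use the additional letter $a_{l+1}$ to record binary tags. Each of the $2^s$ new mini-words is defined by prepending a length-$s$ binary tag written over the two-letter sub-alphabet $\{a_1,a_{l+1}\}$ (with $a_1$ coding $0$ and $a_{l+1}$ coding $1$), followed by either a copy of the corresponding original mini-word (for the $t$ tags in the image of the injection) or a filler made entirely of $a_{l+1}$ (for the remaining $2^s-t$ tags). All $2^s$ tags differ, so the new mini-words are pairwise distinct, as required by Construction~3.1. The content positions of the first $t$ extended mini-words are in a natural bijection with the positions of the original mini-words, and the restriction of the new order $\prec$ to these lifted positions reproduces the original $\prec$, so the distinct-letter count of the new configuration is at least $\beth(t,l,n)$.

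The technical core, and the main obstacle, is verifying the enlarged thinness bound. The plan is to proceed by contradiction: assuming $X(2^s,l+1)$ contains a clique of size $2^s(n-1)+1$, partition its elements according to which of the $2^s$ tag-blocks each lies in. Since the clique has more than $2^s(n-1)$ elements, the pigeonhole principle forces at least one block to contain $n$ of them. By the construction these $n$ elements sit in content positions that are order-compatible with the original embedding, and they descend to an $n$-clique in $X(t,l)$, contradicting its $n$-thinness. Making the descent fully rigorous is the delicate step: one must verify (i) that the binary-tag prefixes do not produce spurious $\prec$-comparabilities between elements of different tag-blocks, and (ii) that intra-block comparabilities in the new graph exactly match the original ones once projected back to $X(t,l)$. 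Once these compatibilities are established the pigeonhole reduction closes the proof.
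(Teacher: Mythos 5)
Your proposal rests on a misreading of the objects involved, and the construction it builds does not prove the inequality. In the construction of Section~\ref{s:cod}, the first argument $t$ of $X(t,l)$ and of $\beth(t,l,n)$ is the \emph{common length} of the mini-words, while $\beth(t,l,n)$ is the maximal possible \emph{number of mini-words} in an $n$-thin configuration --- not the number of distinct letters, and not a configuration with $t$ mini-words. The lemma therefore calls for passing from mini-words of length $t$ over $l$ letters to mini-words of length $2^s$ over $l+1$ letters while keeping the same collection (hence the same count) of mini-words. Your construction instead manufactures exactly $2^s$ mini-words of length $s+t$ carrying binary tags; both the number of mini-words and their length come out wrong, and the quantity you preserve (``distinct letters'') is not the quantity $\beth$ measures. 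There is also a fatal flaw in the proposed pigeonhole: you partition a hypothetical long chain by ``tag-blocks'', i.e.\ by mini-words, but under the order $\prec$ two letters lying in the same mini-word are never comparable (comparability requires the containing mini-words to have distinct, increasing indices), so a chain contains at most one element per mini-word and no block can ever receive $n\geqslant 2$ chain elements. The pigeonhole must be taken over the $2^s$ \emph{positions} inside the mini-words, not over the mini-words themselves.

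The paper's argument is much more direct: to every mini-word of the $n$-thin word $X(t,l)$ append positions $t+1,\dots,2^s$, all filled with a single new letter $a_0$ that lexicographically precedes every letter of $\gA$. The set of mini-words, and hence their number, is unchanged, so it suffices to check that the resulting word $X(2^s,l+1)$ is $\bigl(2^s(n-1)+1\bigr)$-thin. If it contained a chain of length $2^s(n-1)+1$, then by pigeonhole over the $2^s$ positions some position $j$ would carry $n$ elements of the chain, say $w(i_1,j),\dots,w(i_n,j)$; for $j\leqslant t$ these are letters of the original mini-words and give a chain of length $n$ in $X(t,l)$, contradicting its $n$-thinness, while positions $j>t$ contain only copies of the single letter $a_0$, which are pairwise incomparable and so cannot host such a configuration. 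None of the delicate compatibility checks you flag (spurious comparabilities created by tag prefixes, matching of intra-block orders) arises in this padding construction, which is precisely why the new alphabet needs only one extra letter.
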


 \begin{proof}Рассмотрим $n$-светлый класс $X(t,l).$ Введ\"{е}м в алфавит $\gA$ новую букву $a_0$, которая лексикографически меньше любой другой буквы из алфавита $\gA.$ Получен алфавит $\gA'.$ В каждый слово-цикл из класса  $X(t,l)$ добавим $(t+1)$-ю, $(t+2)$-ю,$\ldots,$\linebreak $2^s-$ю позиции, на которые поставим буквы $a_0.$ Получили класс $X(2^s, l+1).$ Он будет $(2^s (n-1)+1)$-светлым, так как в противном случае в этом классе для некоторого $j$ найдутся слова  $w(i_1, j), w(i_2, j),\ldots,w(i_n, j),$ которые образуют антицепь в классе $X(2^s, l+1).$ Тогда:
\begin{enumerate}
	\item Если $j>t,$ то слова  $w(i_1, 1), w(i_2, 1),\ldots,w(i_n, 1)$ образуют антицепь в классе  $X(t,l).$
	\item Если $j\leqslant t,$ то слова  $w(i_1, j), w(i_2, j),\ldots,w(i_n, j)$ образуют антицепь в классе  $X(t,l).$
\end{enumerate}
Получено противоречие с тем, что класс $X(t,l)$ --- $n$-светлый. Тем самым, лемма доказана.\end{proof}

\begin{proposition}
$\beth(t, l, n)\leqslant\beth(t, l, n+1)$
\end{proposition}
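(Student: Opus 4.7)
The plan is direct: the inequality follows immediately from the observation that any $n$-avoiding configuration is automatically $(n+1)$-avoiding. First I would take an $n$-avoiding set $X(t,l)$ whose letter count realizes $\beth(t,l,n)$, as supplied by the definition. Note that the underlying construction of $X(t,l)$ and the order $\prec$ under which chains are measured depend only on $t$ and $l$; the integer $n$ enters solely through the avoidance condition being optimized, not through the structure of the set itself.

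Next I would argue that if the chosen set contains no ascending chain $u_1 \prec u_2 \prec \cdots \prec u_n$ of length $n$, then it cannot contain an ascending chain of length $n+1$, since truncating such a longer chain to its first $n$ elements would yield a chain of the forbidden length. Hence the same configuration is $(n+1)$-avoiding as well, and its $\beth(t,l,n)$ letters witness the bound $\beth(t,l,n+1) \geq \beth(t,l,n)$.

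There is no real obstacle here: the statement is a pure monotonicity remark, asserting that weakening the avoidance constraint can only enlarge the capacity $\beth$. Its role in the paper appears to be a technical convenience that allows one to pass freely between $n$-avoidance and $(n+1)$-avoidance in the inductive comparisons, such as the chain of estimates of the form $\beth(t, l^2, n) \leqslant \beth(2t, l, 2n-1)$ and $\beth(t,l,n) \leqslant \beth(2^s, l+1, 2^s(n-1)+1)$ established in the preceding lemmas.
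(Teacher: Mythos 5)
Your argument is correct and is exactly the intended one: the paper states this proposition without proof as an evident monotonicity remark, and your observation that an $n$-avoiding set contains no chain of length $n+1$ (since truncating such a chain would produce a forbidden chain of length $n$), so that the class of $n$-avoiding sets sits inside the class of $(n+1)$-avoiding sets and the maximal cardinality $\beth$ can only grow, is precisely what is needed. No gap.
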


По лемме \ref{ess:l3} $\beth(t, l, n)\leqslant\beth(2^s, l+1, 2^s (n-1)+1),$ где $s = \ulcorner\log_2(t)\urcorner.$

В силу замечания \ref{ess:r} $t<n$. Значит, $2^s<2n.$

Следовательно, $\beth(2^s, l+1, 2^s (n-1)+1)\leqslant\beth(2^s, l+1, 2n^2).$

По лемме \ref{ess:l1} имеем $$\beth(2^s, l+1, 2n^2)\leqslant\beth(2^{s-1},( l+1)^2, 2n^2)\leqslant\beth(2^{s-2}, (l+1)^{2^2}, 2n^2)\leqslant$$$$\leqslant\beth(2^{s-3}, (l+1)^{2^3}, 2n^2)\leqslant\cdots\leqslant\beth(2, (l+1)^{2^{s-1}},2n^2).$$

По теореме \ref{verh}  имеем $\beth(2, (l+1)^{2^{s-1}},2n^2)<(l+1)^{2^{s-1}}\cdot 4n^4< 4(l+1)^n n^4.$

То есть доказана следующая
\begin{lemma}\label{ess:l4}
$\beth(t, l, n)<4(l+1)^n n^4.$
\end{lemma}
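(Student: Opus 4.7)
The plan is to chain the three preceding structural lemmas (\ref{ess:l1}, \ref{ess:l3}) together with the monotonicity proposition and the period--$2$ bound of Theorem \ref{verh} into a single telescoping estimate. Intuitively, Lemma \ref{ess:l3} normalizes the period length $t$ to a power of $2$ (paying a modest price in the alphabet size and the nesting depth), Lemma \ref{ess:l1} lets us trade a factor of $2$ in the period length for squaring the alphabet, and iterating the latter reduces everything to the base case $t=2$, which Theorem \ref{verh} controls explicitly.

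First I would set $s := \ulcorner \log_2 t \urcorner$ and invoke Lemma \ref{ess:l3} to obtain
$$\beth(t,l,n) \leqslant \beth\bigl(2^s,\, l+1,\, 2^s(n-1)+1\bigr).$$
By Remark \ref{ess:r} it suffices to treat the case $t<n$, so $2^s < 2n$ and hence $2^s(n-1)+1 \leqslant 2n^2$. The monotonicity of $\beth$ in the last argument (the final proposition of the preceding section) then gives $\beth(t,l,n) \leqslant \beth(2^s, l+1, 2n^2)$.

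Next I would apply Lemma \ref{ess:l1} exactly $s-1$ times; each application halves the first argument and squares the second. The telescope yields $\beth(2^s, l+1, 2n^2) \leqslant \beth\bigl(2,\, (l+1)^{2^{s-1}},\, 2n^2\bigr)$. Finally, Theorem \ref{verh} bounds this last quantity by
$$\frac{(2(l+1)^{2^{s-1}}-1)(2n^2-1)(2n^2-2)}{2} < 4(l+1)^{2^{s-1}} n^4,$$
and since $2^{s-1} \leqslant t < n$ we have $(l+1)^{2^{s-1}} \leqslant (l+1)^n$, from which the claimed inequality $\beth(t,l,n) < 4(l+1)^n n^4$ follows.

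The proof is essentially bookkeeping once the three preparatory lemmas are in place; no new combinatorial idea is required. The only steps deserving a moment's attention are checking that the hypothesis $t<n$ from Remark \ref{ess:r} genuinely covers the relevant range, that the intermediate parameters stay in the regime where Lemma \ref{ess:l1} is applicable through all $s-1$ iterations, and that the crude bounds $2^s < 2n$ and $2^{s-1} < n$ suffice to absorb the polynomial and exponential factors into the final constants.
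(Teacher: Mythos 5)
Your proposal follows the paper's own argument step for step: Lemma \ref{ess:l3} with $s=\ulcorner\log_2 t\urcorner$, the reduction $2^s(n-1)+1\leqslant 2n^2$ via Remark \ref{ess:r} and monotonicity, the $(s-1)$-fold telescoping application of Lemma \ref{ess:l1} down to $\beth(2,(l+1)^{2^{s-1}},2n^2)$, and finally Theorem \ref{verh} together with $2^{s-1}<t<n$. The chain of estimates and the resulting bound $4(l+1)^n n^4$ coincide exactly with the paper's, so the proof is correct and takes essentially the same route.
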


Чтобы применить лемму \ref{ess:l4} к доказательству теоремы \ref{ess:t}, требуется оценить число подслов не $n$-разбиваемого слова с одинаковыми периодами.

\begin{lemma}\label{ess:l5}
Если в некотором слове $W$ найдутся $(2n-1)$ подслов, в которых период повторится больше $n$ раз, и их периоды попарно не сильно сравнимы, то $W$ --- $n$-разбиваемое.
\end{lemma}
 \begin{proof}Пусть в некотором слове $W$ найдутся $(2n-1)$ подслов, в которых период повторится больше $n$ раз, и их периоды попарно не сильно сравнимы. Пусть $x$ --- период одного из этих подслов. Тогда в слове $W$ найдутся непересекающиеся подслова $x^{p_1}v'_1,\ldots ,\linebreak x^{p_{2n-1}}v'_{2n-1},$ где $p_1,\ldots ,p_{2n-1}$ --- некоторые натуральные числа, большие $n,$ а $v'_1,\ldots ,v'_{2n-1}$ --- некоторые слова длины $|x|$, сравнимые с $x.$ Тогда среди слов $v'_1,\ldots ,v'_{2n-1}$ найдутся либо $n$ лексикографически больших $x$, либо $n$ лексикографически меньших $x$. Можно считать, что $v'_1,\ldots ,v'_n$ --- лексикографически больше $x$. Тогда в слове $W$ найдутся подслова $v'_1, xv'_2,\ldots ,x^{n-1}v'_n,$ идущие слева направо в порядке лексикографического убывания.\end{proof}

Из этой леммы получаем следствие \ref{co1}.

Рассмотрим не $n$-разбиваемое слово $W.$ Если в н\"{е}м найд\"{е}тся подслово, в котором нециклический период $x$ длины не меньше $n$ повторится больше $2n$ раз, то в слове $x^2$ подслова, которые начинаются с первой, второй$,\ldots,n$-ой позиции, попарно сравнимы. Значит, слово $x^{2n}$ является $n$-разбиваемым. Получаем противоречие с не $n$-разбиваемостью слова $W.$  Из лемм \ref{ess:l5} и \ref{ess:l4} получаем, что существенная высота слова $W$ меньше, чем $(2n-1)\sum\limits_{t=1}^{n-1} \beth(t, l, n) < 8(l+1)^n n^5 (n-1).$ Значит, $\Upsilon(n, l)<8(l+1)^n n^5 (n-1).$ Тем самым, теорема \ref{ess:t} доказана.

\newpage

\chapter{Оценки числа перестановочно упорядоченных множеств} \label{ch:pu_number}

\section{Введение и основные понятия}

\begin{definition} \index{Множество!перестановочно упорядоченное}
Частично упорядоченное множество $M$ называется {\em перестановочно упорядоченным,} если порядок на н\"{е}м есть пересечение двух линейных порядков.
\end{definition}

Рассмотрим теперь некоторую перестановку $\pi$ элементов $1, 2,\dots , n$ (иначе говоря, $\pi\in S_n$). Определим понятие $k$-разбиваемости.

\begin{definition}
Пусть для перестановки $\pi\in S_n$ найд\"{е}тся последовательность натуральных чисел $1\leqslant i_1\leqslant i_2\leqslant\dots\leqslant i_k$ таких, что $\pi(i_1)\geqslant\pi(i_2)\geqslant\dots\geqslant\pi(i_k).$ Тогда перестановка $\pi(1)\pi(2)\dots\pi(n)$ называется $k$-разбиваемой.
\end{definition}

\begin{example}
Количество не 3-разбиваемых перестановок в группе $S_n$ есть $n$-е число Каталана и равно ${(2n)!\over n!(n+1)!}$.
\end{example}

\begin{proposition}
Если слово является $k$-разбиваемым, то для любого $m<k$ оно также является $m$-разбиваемым.
\end{proposition}

Далее нам потребуется определение диаграммы Юнга.

\begin{definition} \index{Диаграмма Юнга!стандартная}
{\em (Стандартной) диаграммой Юнга порядка $n$} называется таблица, в ячейках которой написаны $n$ различных натуральных чисел, прич\"{е}м суммы чисел в каждой строке и каждом столбце возрастают, между числами нет пустых ячеек и есть элемент, который содержится и в первой строке, и в первом столбце.
\end{definition}

\begin{definition}
Диаграмма Юнга называется {\em диаграммой формы} $p = (p_1, p_2,\dots , p_m),$ если у не\"{е} $m$ строк и $i$-я строка имеет длину $p_i.$
\end{definition}

Формы диаграмм Юнга пробегают все возможные разбиения на циклы элементов симметрической группы $S_n$. Любой класс сопряж\"{е}нности группы $S_n$ зада\"{е}тся некоторым разбиением на циклы. Каждому классу сопряж\"{е}нности группы соответствует некоторое е\"{е} неприводимое представление. Следовательно, форма диаграммы Юнга соответствует неприводимому представлению группы $S_n$.

Пронумеруем все клетки диаграммы Юнга формы $p$ числами от 1 до $n$. Пусть $h_k$ --- количество клеток диаграммы Юнга, расположенных
\begin{itemize}
    \item либо в одной строке, либо в одном столбце с клеткой с номером $k$,
    \item находящихся не левее или не выше клетки с номером $k$.
\end{itemize}
Тогда число диаграмм Юнга формы $p$ и равная ему размерность соответствующего неприводимого представления группы $S_n$, вычисляются по ``формуле крюков'' ${n!\over\prod\limits_{k=1}^n h_k}$.

В работе \cite{Sch61} приведена биекция между перестановками $\pi$ чисел $1, 2,\dots , n$ и заполненных теми же числами парами диаграмм Юнга $(P, Q)$. Эта биекция и е\"{е} следствия будут разобраны в главе \ref{sec:main}.

{\it В нашей работе мы доказываем следующие результаты:}

\begin{theorem}\label{th:main}
$\xi_k(n)$ --- количество не $(k+1)$-разбиваемых перестановок $\pi\in S_n$ --- не больше, чем ${k^{2n}\over ((k-1)!)^2}$.
\end{theorem}

\begin{theorem}\label{th:main2}
$\varepsilon_k(n)$ --- количество $n$-элементных перестановочно упорядоченных множеств с максимальной антицепью длины $k$ --- не больше, чем $\min\{{k^{2n}\over (k!)^2}, {(n-k+1)^{2n}\over ((n-k)!)^2}\}$.
\end{theorem}

\begin{corollary}\label{cor:main}
Пусть $\digamma$ является множеством слов алфавита из $l$ букв с введ\"{е}нным на них лексикографическим порядком. Назов\"{е}м {\em полилинейным} слово, все буквы которого различны.  Назов\"{е}м слово {\em $k$-разбиваемым,} если в н\"{е}м найдутся $k$ непересекающихся подслов, идущих в порядке лексикографического убывания. Тогда количество полилинейных слов длины $n$ $(n\leqslant l)$, не являющихся $(k+1)$-разбиваемыми, не больше, чем ${l!k^{2n}\over n!(l-n)!((k-1)!)^2}$.
\end{corollary}

Оценка в теореме \ref{th:main} улучшает полученную в работе \cite{Lat72}. Следует сказать, что оценка на $\xi_k(n)$ в работе \cite{Lat72} была получена для доказательства теоремы Регева, вопрос же о е\"{е} точности не ставился. Оценка в работе \cite{Lat72} доказывается с помощью теоремы Дилуорса. \index{Теорема!Дилуорса} Применение теоремы Дилуорса в некоторых других задачах комбинаторики слов описано в работе \cite{BK12}.

В работе \cite{Ch07} доказывается, что для для определ\"{е}нной функции $K(n) = o(\sqrt[3]{n}\ln n)$ и числа  $k \leqslant K(n) = o(\sqrt[3]{n}\ln n)$ верна асимптотическая оценка $\xi_k(n) = k^{2n - o(n)}$.

Для получения производящей функции в работе \cite{Kn70} введено следующее понятие:

\begin{definition} \index{Диаграмма Юнга!обобщ\"{е}нная}
Обобщ\"{е}нной диаграммой Юнга формы $(p_1, p_2,\dots  p_m)$, где $p_1\geqslant p_2\geqslant\dots\geqslant p_m\geqslant 1$, называется массив $Y$ положительных чисел $y_{ij}$, где $1\leqslant j\leqslant p_i$, $1\leqslant i\leqslant m,$ такой, что числа в его строках не убывают, а в столбцах возрастают.
\end{definition}

Ещ\"{е} требуются двухстрочные массивы следующего типа.

\begin{definition}
Набор пар положительных чисел $(u_1, v_1),$ $(u_2, v_2),\dots,$ $(u_N, v_N)$ такой, что пары $(u_k, v_k)$ расположены в неубывающем лексикографическом порядке, называется {\em набором типа $\alpha(N)$}.
\end{definition}
В работе \cite{Kn70} устанавливается биекция между наборами типа $\alpha(N)$ и парами $(P, Q)$ обобщ\"{е}нных диаграмм Юнга порядка $N$ (т. е. состоящих из $N$ ячеек). Кроме того, существует взаимно-однозначное соответствие между рассматриваемыми наборами и матрицами, в которых число в ячейке из $i$-ой строки и $j$-го столбца равно количеству пар $(i, j)$ в наборе.
В работе \cite{Ges90} на основании функций Шура $s_\lambda$, которые также являются производящими функциями для обобщ\"{е}нных диаграмм Юнга, строится производящая функция для $\xi_k(n).$ Однако сложность построения явной формулы для $\xi_k(n)$ раст\"{е}т экспоненциально по $k$. К примеру, $\xi_3(n)= 2 \sum\limits_{ k=0}^n \bigl(\begin{smallmatrix}2k\\ k \end{smallmatrix}\bigr)\bigl(\begin{smallmatrix}n\\ k \end{smallmatrix}\bigr)^2 {3k^2 + 2k + 1 - n - 2kn\over (k + 1)^2(k + 2)(n - k + 1)}$.

\section{Алгебраические обобщения}

В 1950 году Шпехт (\cite{Sp50}) поставил проблему существования бесконечно базируемого многообразия ассоциативных алгебр над полем характеристики 0. Решение проблемы Шпехта для нематричного случая представлено в докторской диссертации В. Н. Латышева \cite{Lat77}. Рассуждения В. Н. Латышева основывались на применении техники частично упорядоченных множеств. А. Р. Кемер (\cite{Kem87}) доказал, что каждое многообразие ассоциативных алгебр конечно базируемо, тем самым решив проблему Шпехта.

Первые примеры бесконечно базируемых ассоциативных колец были получены А. Я. Беловым (\cite{Bel99}), А. В. Гришиным (\cite{Gr99}) и В. В. Щиголевым (\cite{Shch99}).

После решения проблемы Шпехта в случае характеристики 0 актуален вопрос, поставленный Латышевым.

Введ\"{е}м некоторый порядок на словах алгебры над полем. Назов\"{е}м {\it обструкцией} полилинейное слово, которое
\begin{itemize}
    \item является уменьшаемым (т. е. является комбинацией меньших слов);
    \item не имеет уменьшаемых подслов;
    \item не является изотонным образом уменьшаемого слова меньшей длины.
\end{itemize}

\begin{ques}[Латышев]
Верно ли, что количество обструкций для полилинейного $T$-идеала конечно?
\end{ques}

Из проблемы Латышева вытекает полилинейный случай проблемы конечной базируемости для алгебр над полем конечной характеристики. Наиболее важной обструкцией является обструкция $x_n x_{ n-1}\dots x_1$, е\"{е} изотонные образы составляют множество не $n$-разбиваемых слов.

В связи с этими вопросами возникает проблема:

\begin{ques}
Перечислить количество полилинейных слов, отвечающих данному конечному набору обструкций. Доказать элементарность соответствующей производящей функции.
\end{ques}

\section{Доказательство основных результатов}\label{sec:main}

\begin{lemma}[\cite{Sch61}]\label{lem:schlem3}
Существует взаимооднозначное соответствие между перестановками $\pi\in S_n$ и парами $(P, Q)$ стандартных диаграмм Юнга, заполненных числами от 1 до $n$ и такими, что форма $P$ совпадает с формой~$Q$.
\end{lemma}

\begin{proof}
Пусть $\pi = x_1 x_2\dots x_n$. Построим по ней пару диаграмм Юнга $(P, Q)$. Сначала построим диаграмму $P$.

Определим операцию $S\leftarrow x$, где $S$ --- диаграмма Юнга, $x$ --- натуральное число, не равное ни одному из чисел в диаграмме $S$.
\begin{enumerate}
\item Если $x$ не меньше самого правого числа в первой строке $S$ (если в ней нет чисел, то будем считать, что $x$ больше любого из них), то добавляем $x$ в конец первой строки диаграммы $S$. Полученная диаграмма $S\leftarrow x$.
\item Если найд\"{е}тся большее, чем $x$, число в первой строке $S$, то пусть $y$ --- наименьшее число в первой строке, такое что $y > x$. Тогда заменим $y$ на $x$. Далее проводим с $y$ и второй строкой те же действия, что проводили с $x$ и первой строкой.
\item Продолжаем этот процесс строка за строкой, пока какое-нибудь число не будет добавлено в конец строки.
\end{enumerate}
Из построения $S\leftarrow x$ получаем, что вновь полученная таблица будет диаграммой Юнга.

Пусть $P = (\dots ((x_1\leftarrow x_2)\leftarrow x_3)\dots \leftarrow x_n).$ Тогда $P$ является диаграммой Юнга и соответствует перестановке $\pi$. Пусть диаграмма $Q$ получается из диаграммы $P$ пут\"{е}м замены $x_i$ на $i$ для всех $i$ от 1 до $n$. Тогда $Q$ также является диаграммой Юнга.

Далее в работе \cite{Sch61} показывается, что привед\"{е}нное построение пар диаграмм Юнга $(P, Q)$ по перестановкам $\pi\in S_n$ взаимнооднозначно.
\end{proof}

Из алгоритма, привед\"{е}нного в доказательстве леммы \ref{lem:schlem3} следует

\begin{lemma}[\cite{Sch61}]\label{lem:schth1}
Количество строк в диаграмме $P$ равно длине максимальной убывающей подпоследовательности символов в $\pi = x_1 x_2\dots x_n$.
\end{lemma}

Приступим теперь непосредственно к доказательству теоремы \ref{th:main}.

Рассмотрим перестановку $\pi = x_1 x_2\dots x_n$. Она не $(k+1)$-разбиваема тогда и только тогда, когда в соответствующих ей диаграммах $P$ и $Q$ не больше $k$ строк.

Покрасим числа от 1 до $n$ в $k$ цветов произвольным образом. Таких раскрасок $k^n$. Рассмотрим теперь таблицы (не Юнга!), построенные следующим образом. Теперь для каждого $i$ от 1 до $k$ поместим в $i$-ю строку строимой таблицы числа $i$-го цвета в возрастающем порядке так, чтобы наименьшее число в строке стояло в первом столбце и между числами в одной строке не было пустых ячеек (но целиком пустые строки быть могут). Назов\"{е}м полученные таблицы {\it таблицами типа $\beta (n, k)$}. Между раскрасками в $k$ цветов чисел от 1 до $n$ и таблицами типа $\beta(n, k)$ есть естественная биекция, следовательно, таблиц типа $\beta(n, k)$ будет ровно $k^n$. Заметим, что любая диаграмма Юнга, заполненная числами от 1 до $n$ с не более, чем $k$ строками, будет таблицей типа $\beta(n, k)$. Будем считать, что таблицы $A$ и $B$ типа $\beta(n, k)$ эквивалентны ($A\sim_\beta B$), если одну из другой можно получить при помощи перестановки строк. Тогда если в таблице типа $\beta(n, k)$ не больше одной пустой строки, то в соответствующем классе эквивалентности будет ровно $k!$ элементов. Так как в диаграммах Юнга числа в столбцах строго упорядочены по возрастанию, то в каждом классе эквивалентности таблиц типа $\beta(n, k)$ будет не более одной диаграммы Юнга. Если в диаграмме Юнга ровно $k$ строк, то в соответствующей таблице типа $\beta(n, k)$ не будет пустых строк. Следовательно, диаграмм Юнга, заполненных числами от 1 до $n$ и имеющих ровно $k$ строк, не больше, чем ${k^n\over k!}$.

Если в диаграмме Юнга $k$ строк, то в ней не больше, чем $(n-k+1)$ столбец. Раскрасим числа от 1 до $n$ в $(n-k+1)$ цвет. Рассмотрим теперь таблицы (не Юнга!), построенные следующим образом. Теперь для каждого $i$ от 1 до $(n-k+1)$ поместим в $i$-й столбец строимой таблицы числа $i$-го цвета в возрастающем порядке так, чтобы наименьшее число в столбце стояло в первой строке и между числами в одном столбце не было пустых ячеек (но целиком пустые стобцы быть могут). Назов\"{е}м полученные таблицы {\it таблицами типа $\gamma (n, k)$}. Между раскрасками в $(n-k+1)$ цветов чисел от 1 до $n$ и таблицами типа $\gamma (n, k)$ есть естественная биекция, следовательно, таблиц типа $\gamma (n, k)$ будет ровно $k^n$. Заметим, что любая диаграмма Юнга, заполненная числами от 1 до $n$ с $k$ строками, будет таблицей типа $\gamma (n, k)$. Будем считать, что таблицы $A$ и $B$ типа $\gamma (n, k)$ эквивалентны ($A\sim_\gamma B$), если одну из другой можно получить при помощи перестановки столбцов. Пусть в таблице $A$ ровно $t$ ненулевых столбцов. Всего таблиц типа $\gamma (n, k)$ с $t$ ненулевыми строками будет не более, чем таблиц типа $\gamma (n, n-t+1)$, т. е. не более $t^n$. В классе эквивалентности таблицы типа $\gamma (n, k)$ с $t$ непустых столбцов будет $(\min\{t + 1, n-k+1\})!$ элементов. При этом таблиц с $(n-k)$ или $(n-k+1)$ столбцов будет не более $(n-k+1)^n$ и в каждом классе эквивалентности среди них будет $(n-k+1)!$ элементов. Так как в диаграммах Юнга числа в строках строго упорядочены по возрастанию, то в каждом классе эквивалентности таблиц типа $\gamma (n, k)$ будет не более одной диаграммы Юнга. Следовательно, среди таблиц типа $\gamma (n, k)$ будет не более ${(n-k+1)^n\over (n-k+1)!} + \sum\limits_{ t=1}^{n-k-1}{t^n\over t!}\leqslant {(n-k+1)^n\over (n-k)!}$ диаграмм Юнга.

Значит, пар диаграмм Юнга, в каждой из которых по $k$ строк, не больше, чем $\min\{{(n-k+1)^{2n}\over ((n-k)!)^2}, {k^{2n}\over (k!)^2}\}$. Следовательно, существует не больше $\min\{{(n-k+1)^{2n}\over ((n-k)!)^2}, {k^{2n}\over (k!)^2}\}$ перестановок $\pi\in S_n$ с длиной максимальной убывающей подпоследовательности ровно $k$.

Каждая перестановка соответствует с точностью до изоморфизма паре линейных порядков из $n$ элементов. Порядок в перестановочно упорядоченном множестве есть пересечение двух линейных порядков. Так как у каждой пары линейных порядков ровно одно их пересечение, то по леммам \ref{lem:schlem3} и \ref{lem:schth1} количество перестановочно упорядоченных множеств порядка $n$ с максимальной антицепью длины $k$  не больше, чем $\min\{{(n-k+1)^{2n}\over ((n-k)!)^2}, {k^{2n}\over (k!)^2}\}$. Тем самым теорема \ref{th:main2} доказана.

\begin{remark}
Отметим, что по перестановочно упорядоченному множеству не всегда можно определить, какой именно парой линейных порядков оно порождено. Например, рассмотрим множество $\{ p_i \}_{ i=1}^{15}$ с порядком $(p_1>p_2>p_3, p_4>p_5>\dots >p_8, p_9>\dots >p_{15})$. Оно могло быть порождено:
\begin{itemize}
\item парой линейных порядков с соотношениями $(p_3>p_4, p_8>p_9)$ и\linebreak[4] $(p_3<p_4, p_8<p_9)$,
\item парой линейных порядков с соотношениями $(p_3>p_9, p_15>p_1)$ и\linebreak $(p_3<p_9, p_{15}<p_1)$.
\end{itemize}

Эти 2 пары линейных порядков не изоморфны друг другу.
\end{remark}

Оценим $\Delta_k(n)$ --- количество диаграмм Юнга, заполненных числами от 1 до $n$ и имеющих не больше $k$ строк.

\begin{lemma}
Верно неравенство $\Delta_k(n)\leqslant{k^n\over (k-1)!}$.
\end{lemma}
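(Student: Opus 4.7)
The plan is to extend the $\beta(n,k)$-quasi-tableau argument already deployed in this chapter, which established that the number of Young tableaux with exactly $k$ rows is at most $k^n/k!$. First, I would reapply the same construction with $j$ slots in place of $k$, separately for each $j\in\{1,\ldots,k\}$: distributing $\{1,\ldots,n\}$ into $j$ indexed rows gives $j^n$ distributions, each of which after sorting produces a $\beta(n,j)$-quasi-tableau; a Young tableau with exactly $j$ nonempty rows has pairwise disjoint rows and thus an $S_j$-orbit of size $j!$. Writing $N_j$ for the number of Young tableaux on $\{1,\ldots,n\}$ with exactly $j$ rows, this yields the per-stratum estimate $N_j\leqslant j^n/j!$, with $N_j=0$ whenever $j>n$.

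Summing, one obtains
\[
\Delta_k(n)=\sum_{j=1}^{\min(k,n)}N_j\leqslant\sum_{j=1}^{k}\frac{j^n}{j!}.
\]
The second step is to bound this sum by $k^n/(k-1)!$, which I plan to do by induction on $k$. The base case $k=1$ is trivial, and the inductive step, using $\sum_{j=1}^{k-1}j^n/j!\leqslant(k-1)^n/(k-2)!$ on the smaller range, reduces to the single inequality
\[
\frac{(k-1)^n}{(k-2)!}+\frac{k^n}{k!}\leqslant\frac{k^n}{(k-1)!},
\]
which after clearing denominators is equivalent to $(k-1)^n\leqslant k^{n-1}$, i.e.\ $(1-1/k)^n\leqslant 1/k$.

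The main obstacle is the small-$n$ regime, where the inequality $(k-1)^n\leqslant k^{n-1}$ fails (it requires $n\geqslant\log k/\log(k/(k-1))$, which grows like $k\log k$). For the exceptional small values of $n$ the claim has to be handled by an auxiliary route: one may use the crude estimate $\Delta_k(n)\leqslant p_k(n)$, where $p_k(n)$ counts partitions of $n$ into at most $k$ parts and is polynomial in $n$, and verify the finitely many boundary instances directly. An alternative route, providing an independent check and already sufficient for $k\leqslant 3$, is the global inequality $\sum_{j=1}^{k}N_j\cdot k!/(k-j)!\leqslant k^n$, obtained by lifting each Young tableau with $j$ rows to $k!/(k-j)!$ distinct $\beta(n,k)$-quasi-tableaux via an ordered choice of $j$ slots out of $k$; its minimal coefficient $k!/(k-1)!=k$ at $j=1$ yields $\Delta_k(n)\leqslant k^{n-1}$, which already implies the claim whenever $(k-1)!\leqslant k$.
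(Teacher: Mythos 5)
Your first two steps are essentially the paper's own argument: the paper also counts the $j^n$ distributions of $\{1,\dots,n\}$ into $j$ ordered rows and uses the size-$j!$ orbit under row permutations to bound the number of standard tableaux stratum by stratum (it merely groups the top two strata, with exactly $k$ and exactly $k-1$ rows, under the single bound $k^n/k!$), and it likewise finishes by an induction on $k$. Your per-stratum bound $N_j\leqslant j^n/j!$ and your lifting inequality $\sum_{j}N_j\cdot k!/(k-j)!\leqslant k^n$ are both correct, and your reduction of the inductive step to $(k-1)^n\leqslant k^{n-1}$ is accurate.

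The gap is exactly in the regime you flag, and neither of your two patches closes it. The estimate $\Delta_k(n)\leqslant p_k(n)$ is false: $\Delta_k(n)$ counts standard fillings, not shapes, and a single shape on $n$ cells can carry exponentially many standard tableaux, so the partition count cannot dominate it. The lifting route gives only $\Delta_k(n)\leqslant k^{n-1}$, which implies the claim precisely when $(k-1)!\leqslant k$, i.e. for $k\leqslant 3$, as you concede. More importantly, no patch can exist, because the stated inequality itself fails when $n$ is small relative to $k$: already $\Delta_5(2)=2>5^2/4!=25/24$, and for $k$ near $n$ with $n$ large, $\Delta_k(n)$ is essentially the number of involutions in $S_n$, which eventually exceeds $k^n/(k-1)!$. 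Be aware that the paper's own proof has the identical defect: its concluding chain $\Delta_k(n)\leqslant \frac{k^n}{k!}+\sum_{i=1}^{k-2}\frac{i^n}{(i-1)!}\leqslant\frac{k^n}{(k-1)!}$ is asserted without justification and the second inequality also fails for small $n$ (e.g. $n=2$, $k=5$). So your write-up reproduces the paper's argument on the domain where it actually works (roughly $n\gtrsim k\log k$, where $(k-1)^n\leqslant k^{n-1}$ holds), and the ``exceptional small-$n$ instances'' you propose to verify directly contain genuine counterexamples, not merely unproved cases.
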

\begin{proof}
Как показывалось ранее, если в таблице типа $\beta(n, k)$ не больше одной пустой строки, то в соответствующем классе эквивалентности будет ровно $k!$ элементов. Следовательно, диаграмм Юнга, заполненных числами от 1 до $n$ и имеющих либо $(k-1)$, либо $k$ строк, не больше ${k^n\over k!}$. Значит, для $k < 3$ лемма доказана. Пусть она доказана для $k<t$. Тогда для $k=t$ имеем $\Delta_k(n)\leqslant {k^n\over k!}+\sum\limits_{i=1}^{ k-2}{i^n\over (i-1)!}\leqslant {k^n\over (k-1)!}.$
\end{proof}

Значит, пар диаграмм Юнга порядка $n$, в каждой из которых по  $\leqslant k$ строк, не больше, чем ${k^{2n}\over ((k-1)!)^2}$. Следовательно, по леммам \ref{lem:schlem3} и \ref{lem:schth1} количество не $(k+1)$-разбиваемых перестановок $\pi\in S_n$ меньше ${k^{2n}\over ((k-1)!)^2}$. Тем самым теорема \ref{th:main} доказана.

Выведем из теоремы \ref{th:main} следствие \ref{cor:main}. Для каждого набора букв $a_{ i_1}, a_{ i_2},\dots , a_{ i_n}$ количество не $(k+1)$-разбиваемых полилинейных слов длины $n$, составленных из этого набора букв, не больше, чем ${k^{2n}\over ((k-1)!)^2}$. Каждому полилинейному слову отвечает ровно один набор из $n$ букв. Так как наборов из $n$ букв ровно $\bigl(\begin{smallmatrix}l\\ n \end{smallmatrix}\bigr)$, то количество не  $(k+1)$-разбиваемых полилинейных слов длины $n$ не больше, чем ${l!k^{2n}\over n!(l-n)!((k-1)!)^2}.$ Тем самым, следствие \ref{cor:main} доказано.

\section{Обобщенные диаграммы Юнга и их производящие функции}

\begin{lemma}[\cite{Kn70}]\label{lem:knuth}
Существует взаимооднозначное соответствие между наборами типа $\alpha(N)$  и парами $(P, Q)$ обобщ\"{е}нных диаграмм Юнга порядка $N$ у которых форма $P$ совпадает с формой $Q$.
\end{lemma}

\begin{proof}
Определим операцию $S\leftarrow x$, где $S$ --- обобщ\"{е}нная диаграмма Юнга, $x$ --- натуральное число, так же, как в доказательстве леммы \ref{lem:schlem3}. Сопоставим некоторому набору типа $\alpha(N)$ из пар $(u_1, v_1), (u_2, v_2),\dots (u_N, v_N)$ диаграмму Юнга $P = (\dots ((v_1\leftarrow v_2)\leftarrow v_3)\dots \leftarrow v_N)$. Пусть диаграмма $Q$ получается из диаграммы $P$ пут\"{е}м замены $v_i$ на $u_i$ для всех $i$ от 1 до $N$. Тогда $Q$ также является диаграммой Юнга.

Далее в работе \cite{Kn70} показывается, что привед\"{е}нное построение пар обобщ\"{е}нных диаграмм Юнга $(P, Q)$ по наборам типа $\alpha$ взаимнооднозначно.
\end{proof}

\begin{notation}
Перестановка $\pi\in S_n$ является набором типа $\alpha(n)$ из пар
 $(1, \pi(1)),\dots ,(n, \pi(n))$.
\end{notation}

{\it Симметрические функции.}

Здесь и далее считаем, что множество индексов при переменных симметрический функций является множеством натуральных чисел.

Напомним несколько понятий из теории симметрических функций.

{\it Полная симметрическая функция} $h_n$ равна $h_n = \sum\limits_{i_1\leqslant i_2\leqslant\dots\leqslant i_n}x_{i_1}x_{i_2}\dots x_{i_n}.$

Пусть $\lambda$ --- набор $(\lambda_1, \lambda_2,\dots ,\lambda_k)$ для некоторого натурального $k$. Пусть также $|\lambda| = \sum\limits_{i=1}^k \lambda_i$. Набор $\lambda$ называется {\it разбиением,} если $\lambda_1\geqslant \lambda_2\geqslant\dots \geqslant\lambda_k.$

{\it Функция Шура} $S_\lambda$ равна $S_\lambda = \det(h_{\lambda_i + j-i})_{1\leqslant i, j\leqslant k}.$

В работе \cite{Ges90} определяются функции $b_i = \sum\limits_{n=0}^{\infty}{x^{2n + i}\over n!(n+i)!}$ и $U_k = \det(b_{\mid i-j\mid})_{1\leqslant i, j\leqslant k}.$

Также вводится функция $R_k(x, y)$ как $R_k(x, y) = \sum\limits_k s_\lambda(x) s_\lambda(y)$, где сумма бер\"{е}тся по всем разбиениям на не более чем $k$ частей. Тогда коэффициент при $x_1 x_2\dots x_n y_1 \dots y_n$ в функции $R_k(x, y)$ равен $\xi_k(n).$ Из этого в \cite{Ges90} выводится, что $U_k = \sum\limits_{n=0}^{\infty}\xi_k(n){ x^{ 2n}\over (n!)^2}.$

Количество полилинейных слов длины $n$ над $l$-буквенным алфавитом $(n \leqslant l)$, в каждом из которых не найд\"{е}тся последовательности из $(k+1)$ буквы в
порядке лексикографического убывания есть $\bigl(\begin{smallmatrix}l\\ n \end{smallmatrix}\bigr)\xi_k(n)$.

\chapter{Дальнейшее улучшение оценок высоты}\label{final}


Представленная вниманию читателя техника, возможно, позволяет улучшить
полученную в данную работе оценку, но при этом она останется только
субэкспоненциальной. Для получения полиномиальной оценки, если она
существует, требуются новые идеи и методы.

В главах \ref{ch:nil} и \ref{ch:height}
подслова большого слова используются прежде всего в качестве
множества независимых элементов, а не набора тесно связанных друг с
другом слов. Далее используется то, что буквы внутри
подслов раскрашены. При уч\"{е}те раскраски только первых букв
подслов получается экспоненциальная оценка. При рассмотрении
раскраски всех букв подслов опять получается экспонента. Данный
факт имеет место из-за построения иерархической системы подслов. Не исключено,
что подробное рассмотрение приведенной связи подслов вкупе с
изложенным выше решением позволит улучшить полученную оценку
вплоть до полиномиальной.

В работе получены оценки на высоту, линейные по числу образующих $l$. На самом деле точные оценки на высоту также линейны по $l$. Следовательно, если какие-либо оценки будут доказаны для случая $l=2$, то с помощью перекодировки образующих можно получить общий случай. Модельный пример применения механизма перекодировки можно найти в секции \ref{s:cod}. Заметим, что в этой секции оценки изначально доказываются не для конкретного числа образующих, а для конкретного базиса Ширшова. 

Представляется перспективным перевод основных понятий доказательства теоремы Ширшова на язык графов. По написанному выше, можно считать, что у нас две образующие: 0 и 1. \index{Граф!подслов} Рассмотрим некоторое очень длинное не $n$-разбиваемое слово $W$ с ракрашенными в соответствии с теоремой Дилуорса позициями (см., например, подсекцию \ref{s:nil:basic_properties}). Теперь возьм\"{е}м подслово $u$ слова $W$, достаточно большое для того, что если мы возьм\"{е}м подслово $v$ слова $W$, в два раза большее $u$ по длине и, в свою очередь, содержащее $u$ как подслово, то число цветов позиций, встречающихся в $v$, примерно равно аналогичному числу в $u$. Рассмотрим теперь бинарное корневое дерево. Отметим у каждой невисячей вершины левого сына как 0, а правого --- как 1. Корневую вершину никак отмечать не будем. Пусть глубина дерева крайне мала по сравнению с длиной слова $u$. Заметим, что для любого натурального $k$ любое слово над бинарным алфавитом длины $k$ может быть представлено как путь длины $k$, начинающийся из корня рассмотренного бинарного графа. 

Теперь для каждого подслова слова $u$ длины $k$ рассмотрим в графе соответствующий путь и покрасим этот путь в цвет первой позиции соответствующего $k$-начала. Естественно, некоторые ребра будут покрашены по несколько раз. Полученная картина --- слишком п\"{е}страя, чтобы сделать какие-либо выводы. Поэтому для каждого цвета оставим самый левый путь этого цвета. Назов\"{е}м полученную структуру {\em деревом подслов}\index{Дерево подслов} (сравните это дерево с наборами $B^p(i)$ из подсекции \ref{ss:set_bpi}). Так как $u$ --- подслово слова $W$, можно представить себе его как ``окно'' определ\"{е}нной длины, положенное на слово $W$. Теперь будем двигать это окно вправо шагом в одну позицию. На каждом шаге будем перерисовывать дерево позиций. Назов\"{е}м изменение дерева позиций при движении окна вправо {\em эволюцией дерева подслов}\index{Эволюция дерева подслов}. Пусть в слове $W$ нет периодов длины $n$, то есть рассматриваем так называемый ниль-случай. Если взять $k=n$, то по лемме \ref{c:lem1.3} при сдвиге окна на $n^2$ позиций дерево позиций точно изменится. Если дерево позиций хорошо сбалансировано, то есть мало групп цветов, имеющих длинную общую часть пути, то дерево довольно быстро эволюционирует, более того, количество изменений в н\"{е}м будет ограничено полиномом. Однако если дерево подслов не сбалансировано, то некоторые ветки дерева ``перегружаются'' цветами. В подсч\"{е}те того, до какой степени ветки могут быть перегружены цветами, быть может, кроется получение полиномиальной оценки на высоту.

Рассмотренный выше граф одинаково применим как для оценки индекса нильпотентности, так и для оценки существенной высоты. Ниже построен граф, который можно построить на периодических подсловах при оценке существенной высоты. Пусть $t$ --- длина периода.

Пусть слово $W$ не $n$-разбиваемо. Как и прежде, рассмотрим некоторое множество попарно непересекающихся несравнимых подслов слова $W$ вида $z^m,$ где $m>2n,$ $z$ --- $t$-буквенное нециклическое слово.  Будем называть
элементы этого множества {\it представителями}, имея в виду, что эти
элементы являются представителями раз\-лич\-ных классов
эквивалентности по сильной сравнимости. Пусть набралось $t$
таких предс\-та\-ви\-те\-лей. Пронумеруем их всех в порядке положения в
слове $W$ (первое --- ближе всех к началу слова) числами от $1$ до $t$. В каждом выбранном представителе в качестве подслов содержатся ровно $t$ различное $t$-буквенное слово.

Введ\"{е}м порядок на этих словах следующим образом: $u\prec v$, если
\begin{itemize}
    \item $u$ лексикографически меньше $v$;
    \item представитель, содержащий $u$ левее представителя, содержащего $v$.
\end{itemize}
Из не сильной
$n$-разбиваемости получаем, что максимальное возможное число
попарно несравнимых элементов равно $t$. По теореме Дилуорса существует разбиение рассматриваемых $t$-буквенных слов на $t$ цепь. Раскрасим слова в $t$ цвет в соответствии с их принадлежностью к цепям. Раскрасим позиции, с которых начинаются слова, в те же цвета, что и соответствующие слова.

 \index{Слово!-цикл}
Напомним, что {\em слово-цикл $u$} --- слово $u$ со всеми его сдвигами по циклу.

Рассмотрим ориентированный граф $G$ с вершинами вида $(k,i)$, где \index{Граф!подслов!большой длины}
$0 < k < n$ и $0 < i \leqslant l$. Первая координата обозначает цвет, а вторая --- букву.

Ребро с некоторым весом $j$
выходит из $(k_1, i_1)$ в $(k_2, i_2)$, если
\begin{itemize}
    \item для некоторых $i_3,i_4,\ldots,i_t$ в $j$-ом представителе содержится слово-цикл\linebreak[3] $i_1i_2\cdots i_t$;
    \item позиции, на которых стоят буквы $i_1, i_2$ раскрашены в цвета $k_1, k_2$ соответственно.
\end{itemize}

Таким образом, граф $G$ состоит из ориентированных циклов длины $t$. Теперь нам требуется найти показатель, который бы строго монотонно рос с появлением каждого нового представителя при движении от начала к концу слова $W.$ Можно заметить, что как и в случае дерева подслов, мы естественным образом столкнулись с понятием эволюции графов. Только в данном случае ``окно'' может ``растягиваться'', то есть его левый край оста\"{е}тся на месте, а правый движется вправо. Разбалансировка же выражается также --- в длинных путях, которые по очереди входят в разные циклы длины $t$. Отметим, что конструкция графа $G$ близка к конструкции графов Рози. 
Обзор тематики графов Рози можно найти в \cite{Lot83}.

Интересно также получить оценки на высоту алгебры над множеством
слов степени не выше сложности алгебры (в англоязычной литературе $\PI$-degree). В работе
\cite{BBL97} получены экспоненциальные оценки, а для
слов, не являющихся линейной комбинацией лексикографически меньших,
в работе \cite{Bel07} получены надэкспоненциальные оценки.

\newpage

\chapter*{Приложение 1. Комбинаторика слов} 

\addcontentsline{toc}{chapter}{\numberline {}Приложение 1. Комбинаторика слов}

\markboth{}{Приложение 1. Комбинаторика слов}

\begin{enumerate}
	\item Проблема Куроша-Левицкого для конечно порожд\"{е}нных \index{Проблема!Куроша}
\begin{itemize}
	\item ниль-алгебр конечного ниль-индекса
	\item алгебр конечного индекса
\end{itemize}
{\bf Теорема Ширшова о высоте.}\index{Теорема!Ширшова о высоте} Множество всех не $n$-разбиваемых слов в конечно порожд\"{е}нной алгебре с допустимым полиномиальным тождеством имеет ограниченную
 высоту $H$ над множеством слов степени не выше $n-1$.

Литература: \cite{Kur41, BBL97, SGS78}.

	\item {\bf Определение.} Ассоциативное слово называется {\em правильным}, если оно лексикографически больше любого своего циклического сдвига. \index{Слово!правильное}

Неассоциативное слово называется {\em правильным}, если оно правильное в ассоциативном смысле и 
\begin{itemize}
	\item если $[u] = [[v][w]]$, то $v$ и $w$ --- правильные слова,
	\item если $[u] = [[v_1] [v_2]] [w]$, то $v_2 \leqslant w$.
\end{itemize}

{\bf Теорема Ширшова}.\index{Теорема!Ширшова для алгебр Ли} В правильном в ассоциативном смысле слове существует единственный способ расставить Лиевы скобки так, чтобы полученное слово было правильным в неассоциативном смысле. 

Правильные слова образуют базис свободной алгебры Ли.

Литература: \cite{BBL97, SGS78, Sh58, Ufn89}. 

	\item {\bf Определение.}\index{Слово!полуправильное} Назов\"{е}м слово $u$ полуправильным, если
любой его конец либо лексикографически меньше $u$, либо является
началом $u$.

{\bf Теорема.} Любое бесконечное слово над конечным алфавитом содержит подслово $fgf$, где $f$ --- полуправильное, а $g$ --- правильное (возможно, пустое) слово.

Литература: \cite{BBL97},  \cite{Ufn89}. 

	\item {\bf Теорема Ван дер Вардена.}\index{Теорема!Ван дер Вардена} Пусть $n$ и $k$ --- натуральные числа, последовательность натуральных чисел разбита на $k$ множеств. Тогда найд\"{е}тся число $f(n, k)$ такое, что среди первых $f(n, k)$ натуральных чисел найд\"{е}тся арифметическая прогрессия длины $n$ из одного множества. 

Многомерное обобщение для фигур и гомотетии с положительным коэффициентом.

Литература: \cite{Bug06, Hin79}. 

	\item Построение фрактала Рози с помощью чисел Трибоначчи.

Литература: \cite{wiki1}. 

	\item {\bf Теорема.}\index{Слово!Зимина}  Слово от $n$ букв избегаемо тогда и только тогда, когда ни одно из его значений не является подсловом $Z_n$. 

Литература: \cite{Ufn89, Sap14, Zim82}. 
	
	\item {\bf Определение.} Группа удовлетворяет условию $C'(\lambda)$, когда общая часть любых двух порождающих соотношений меньше, чем $\lambda$, умноженное на длину любого из них.

{\bf Лемма Гриндлингера.}\index{Лемма!Гриндлингера} В карте, удовлетворяющей условию $C'({1\over 6})$, найд\"{е}тся клетка, большая часть границы которой лежит на границе карты.

Алгебраическая формулировка с группами и соотношениями.

Литература: \cite{Sap14, Kl09}. 

	\item {\bf Теорема Регева.}\index{Теорема!Регева} Если алгебры $A$ и $B$ удовлетворяют полиномиальному тождеству, то алгебра  $A\otimes_F B$ также удовлетворяет полиномиальному тождеству. 

Литература: \cite{Reg71, Lat72}. 

	\item Биекция между словами и парами диаграмм Юнга.

Литература: \cite{Sch61, Kn70}. 

	\item {\bf Определение.}  $p_w(n)$ --- количество подслов длины $n$ в слове $w$.

Слово $w$ называется {\em уравновешенным}, если в любых двух его подсловах одинаковой длины количество единиц различается не больше, чем на 1.\index{Слово!уравновешенное}

 {\bf Теорема.} \index{Слово!Штурма}Эквивалетность тр\"{е}х определений слов Штурма:
\begin{itemize}
	\item $p_w(n) = n+1$
	\item $w$ --- уравновешенное непериодичное слово
	\item $w$ --- механическое слово с иррациональным углом наклона
\end{itemize}
 Литература: \cite{Lot83, Fr11}. 

\end{enumerate}

\newpage

\chapter*{Приложение 2. Алгоритмические методы в теории колец} 

\addcontentsline{toc}{chapter}{\numberline {}Приложение 2. Алгоритмические методы в теории колец}

\markboth{}{Приложение 2. Алгоритмические методы в теории колец}

\begin{enumerate}

	\item Привед\"{е}нные ниже факты отдельно доказывались для $\epsilon$- и $Lie$- алгебр, но доказательства и формулировки для различных типов алгебр похожи, поэтому ниже приведена попытка объединения формулировок.

А.И. Ширшов вв\"{е}л и доказал следующие понятия и теоремы:

{\bf Определение} \index{Слово!правильное}Слова длины 1 назовем {\em $\Omega$-правильными}
($\Omega = K, AK, Lie$) словами и произвольно упорядочим. Считая, что $\Omega$-правильные слова, длина которых меньше $n$, $n >1$, уже определены и упорядочены каким-то способом так, что слова меньшей длины предшествуют
словам большей длины, назовем слово $w$ длины $n$ {\em $\Omega$-правильным}, если
\begin{enumerate}
 \item $w =uv$, где $u$, $v$ --- $\Omega$-правильные слова;
 \item $u\geqslant v$ при $\Omega = K$ и $u>v$ при $\Omega = AK, Lie$;
 \item (Только для $\Omega=Lie$) если $u=u_1u_2$, то $u\leqslant v$.
\end{enumerate}

{\bf Теорема 1.} Правильные слова образуют базис свободной $\Omega$-алгебры.

{\bf Теорема 2.} Всякая подалгебра свободной $\Omega$-алгебры свободна. 

{\bf Проблема тождества для $\Omega$-алгебр.}\index{Проблема!тождества} Существует ли алгоритм, который для произвольного конечного множества $S$ и произвольного элемента $a$ из $\Omega$-алгебры позволяет выяснить, принадлежит ли $a$ идеалу $\langle S\rangle$.

{\bf Теорема о тождестве 1.} Пусть $S$ --- некоторое фиксированное множество элементов свободной $\Omega$-алгебры $E$. Тогда существует алгоритм, позволяющий за конечное число шагов определить, принадлежит ли произвольный элемент $t\in E$ идеалу $\langle S\rangle$.

{\bf Следствие.} Существует алгоритм, решающий проблему тождества для алгебр Ли с одним определяющим соотношением.

{\bf Теорема о тождестве 2.} Существует алгоритм, решающий проблему тождества для алгебр Ли с однородными множествами определяющих соотношений.

{\bf Теорема о свободе.}\index{Теорема!о свободе} Пусть $E_0$ --- $\Omega$-алгебра с множеством порождающих $R$ и одним опредлеяющим соотношением $s=0$, в левую часть которого входит образующий $a_\alpha$. Тогда подалгебра $E'_0$, порожд\"{е}нная в алгебре $E_0$ множеством $R\setminus a_\alpha$, свободна.

Литература: \cite{Sh53, Sh54, Sh58, Sh62(1), Sh62(2), BBL97}. 

	\item {\bf Определение.} Группа удовлетворяет условию $C'(\lambda)$, когда общая часть любых двух порождающих соотношений меньше, чем $\lambda$, умноженное на длину любого из них.

{\bf Лемма Гриндлингера.}\index{Лемма!Гриндлингера} В карте, удовлетворяющей условию $C'({1\over 6})$, найд\"{е}тся клетка, большая часть границы которой лежит на границе карты.

Алгебраическая формулировка с группами и соотношениями.

Алгоритм Дена-Гриндлингера определения тривиальности группового слова в группе с конечным числом соотношений.

Литература: \cite{Sap14, Kl09}. 

	\item {\bf Diamond-lemma.}\index{Даймонд-лемма} Пусть $M$ --- ЧУМ, в котором любая убывающая цепь --- конечна. 

{\bf Определение.} {\em Отношение Ч\"{е}рча-Россера:}\index{Отношение Ч\"{е}рча-Россера} $x\leftrightsquigarrow y$, если у $x$ и $y$ есть общий потомок.
\index{Граф!Ньюмана}
Представим $M$ в виде графа Ньюмана с множеством р\"{е}бер $R$. Тройка $(M, \leqslant, R)$ называется {\em схемой симплификации}. Следующие условия эквивалентны:
\begin{enumerate}\index{Свойство!каноничности}
\item $M$ --- обладает свойством каноничности (т.е. у каждого $m\in M$ нормальная форма единственна).
\item Отношение Ч\"{е}рча-Россера --- транзитивно.
\item Выполняется условие локального слияния (``у любых двух братье есть общий потомок'').
\item В любой компоненте связности лежит ровно один минимальный элемент.
\item ($x\backsim y$, т.е. между $x$ и $y$ есть неориентированный путь) $\Longleftrightarrow (x\leftrightsquigarrow y)$.
\end{enumerate}

\medskip
{\bf Определение.} Введ\"{е}м на мономах $X^*$ линейный порядок $<$ такой, что для любого монома $z\in X^*$ имеет место $x<y\Rightarrow xz<yz$. {\em Базис Гр\"{е}бнера-Ширшова}\index{Базис!Гр\"{е}бнера--Ширшова} некоторого идеала $I\vartriangleleft  k\langle X\rangle$ --- это конечное множество полиномов $G$, порождающее идеал $I$, прич\"{е}м старший моном $\bar{h}$ любого полинома $h\in I$ делится на некоторый старший моном $\bar{g}$ полинома из базиса Гр\"{е}бнера-Ширшова.

Элемент $h$ обладает {\em $H$-представлением}\index{$H$-представление} относительно системы порождающих $G$, если в представлении $h=\sum \alpha_i u_ig_iv_i$ любой моном $u_i\bar{g_i}v_i$ не больше, чем $\bar{h}$.

Определим для полинома $f\in k\langle X\rangle$ его $supp(f)$ --- упорядоченное множество составляющих его мономов. Тогда лексикографический порядок на суппортах полиномов индуцирует частичный порядок $\leqslant_{supp}$ на полиномах $k\langle X\rangle$.

{\bf Теорема.} Следующие условия эквивалентны.

\begin{enumerate}
\item $G$ --- базис Гр\"{е}бнера-Ширшова $I$.
\item Любой элемент $I$ редуцируется относительно $G$ к нулю.
\item Любой $h\in I$ обладает $H$-представлением относительно $G$.
\item Схема  $(k\langle X\rangle, \leqslant_{supp}, R_G)$ обладает свойством каноничности.
\end{enumerate}

Литература: \cite{Ber78, BMM95, Lat12}.

	\item {\bf Теорема (Морс--Туэ, \cite{Th1906, Mor21}).}\index{Теорема!Морса--Туэ} 
Пусть $X = \{a, b\}$, $X^*$ --- множество слов над алфавитом $X$, подстановка $\phi$ задана соотношениями $\phi(a)=ab$, $\phi(b)=ba$. Тогда если слово $w\in X^*$ --- бескубное, то и $\phi(w)$ --- бескубное.

{\bf Теорема (Туэ--1, \cite{Th1906}).}\index{Теорема!Туэ}
 Пусть $X = \{a, b, c\}$, $X^*$ --- множество слов над алфавитом $X$, подстановка $\phi$ задана соотношениями $\phi(a)=abcab$, $\phi(b)=acabcb$, $\phi(c)=acbcacb$. Тогда если слово $w\in X^*$ --- бесквадратное, то и $\phi(w)$ --- бесквадратное.

{\bf Теорема (Туэ--2).}\index{Теорема!Туэ} 
 Пусть $L$ и $N$ --- алфавиты, $N^*$ --- множество слов над алфавитом $N$, для подстановки $\phi:L\rightarrow N^*$ выполнены следующие условия:
\begin{enumerate}
\item если длина $w$ не больше 3, то $\phi(w)$ --- бесквадратное;
\item если $a$, $b$ --- буквы алфавита $L$, а $\phi(a)$ --- подслово $\phi(b)$, то $a=b$. 
\end{enumerate}
Тогда если слово $w\in L^*$ --- бесквадратное, то и $\phi(w)$ --- бесквадратное.

{\bf Теорема (Крошмор, \cite{Cr82}).}\index{Теорема!Крошмора}  Пусть $L$ и $N$ --- алфавиты, $N^*$ --- множество слов над алфавитом $N$, $\phi:L\rightarrow N^*$ --- подстановка, $M$ --- наибольший размер образа буквы алфавита $L$ при подстановке $\phi$, $m$ --- наименьший размер образа буквы $L$ при той же подстановке, $k=max\{3, 1+[(M-3)/m]\}$. Тогда подстановка $\phi$ --- бесквадратная в том и только в том случае, когда для любого бесквадратного слова $w$ длины $\leqslant k$ слово $\phi(w)$ будет бесквадратным.

Литература: \cite{Sap14, Th1906, Mor21, Ber77_1, Ber77_2, Cr82}.

	\item  {\bf Определение.} \index{Алгебра!мономиальная}Алгебра $А$ называется {\em мономиальной}, если в ней есть базис определяющих
соотношений вида $c = 0$, где $c$ --- слово от образующих алгебры.

{\em Конечным автоматом} ({\bf КА}) с алфавитом $X$ \index{Автомат!конечный}
входных символов называется ориентированный граф, в котором выделено два (возможно
пересекающиеся) множества вершин, называемых \index{Вершина графа!начальная}{\em начальными} 
и \index{Вершина графа!конечная}{\em финальными (конечными)}\index{Вершина графа!финальная} и каждое ребро помечено буквой из конечного
алфавита $X$. 
Язык $L$ называется \index{Язык!автоматный}\index{Язык!регулярный}
{\em регулярным} или {\em автоматным}, если существует конечный
автомат, допускающий слова из множества $L$ и только их.

Автомат называется {\em детерминированным}, если \index{Автомат!детерминированный}
\begin{enumerate}
\item начальная вершина ровно одна;
\item из любой его вершины не может выходить более одного ребра, помеченного
одной и той же буквой;
\item нет ребер, помеченных пустой цепочкой.
\end{enumerate}

{\bf Предложение.} для всякого недетерминированного
{\bf КА} существует детерминированный {\bf КА},
допускающий то же самое множество слов.

{\bf Определение.} \index{Алгебра!автоматная}Алгебра $A$ называется {\em автоматной},
если множество ее ненулевых слов от образующих
А является регулярным языком.

{\bf Предложение.} Конечно определенная мономиальная
алгебра является автоматной.

{\bf Определение.} \index{Функция роста алгебры}{\em Функция роста $V_A(n)$ алгебры} $A$ --- это размерность пространства,
порожд\"{е}нного словами длины не выше $n$.

Если следующий предел существует,
то его значение называется\index{Размерность!Гельфанда--Кириллова} {\em размерностью Гельфанда--Кириллова}
алгебры $A$ и обозначается $GK(A)$:
$$GK(A)=\lim\limits_{n\rightarrow\infty}{{\ln(V_A(n))\over \ln(n)}}.$$

Пусть $\Gamma(A)$ --- минимальный детерминированный граф автоматной
алгебры $A$. Назовем вершину графа {\em циклической}\index{Вершина графа!циклическая}, если
существует путь, начинающийся и заканчивающийся в этой
вершине. Назовем вершину {\em дважды циклической}\index{Вершина графа!циклическая!дважды}, если существуют
два различных пути, начинающихся и заканчивающихся
в этой вершине и не проходящих ни через одну другую вершину
дважды.

Пусть граф $\Gamma$ не имеет дважды циклических вершин. Назовем
{\em цепью} подграф графа $\Gamma$, состоящий из последовательности
ребер, в которой конец предыдущего ребра является началом
следующего, и никакая вершина не встречается дважды. Назовем
{\em простым графом} подграф графа $\Gamma$, состоящий из конечного
числа циклов, занумерованных числами $1, 2,\dots, d$, причем пары
соседних циклов с номерами $i, i + 1$ соединены ровно одной
цепью, направленной от $i$-гo к $(i+1)$-му циклу. В первый цикл может
входить одна цепь, и из последнего также может выходить
одна цепь. 

{\bf Теорема (Уфнаровский).}\index{Теорема!Уфнаровского} Пусть $A$ --- автоматная
алгебра, $\Gamma(A)$ --- ее минимальный детерминированный граф.
\begin{enumerate}
\item Если $\Gamma(A)$ имеет вершину, принадлежащую двум различным
циклам, то $A$ имеет экспоненциальную функцию роста.
\item Если $\Gamma(A)$ не имеет дважды циклических вершин,
то $A$ имеет степенную функцию роста. Степень роста\index{Функция роста алгебры}\index{Размерность!Гельфанда--Кириллова}
(размерность Гельфанда-Кириллова) равна количеству циклов
в максимальном простом подграфе, содержащемся в $\Gamma(A)$.
\end{enumerate}

{\bf Теорема.} Пусть граф автоматной мономиальной алгебры
$A$ не имеет вершин, принадлежащих двум циклам. Тогда
$A$ вкладывается в алгебру матриц над полем.

{\bf Следствие.} Пусть $A$ --- автоматная мономиальная
алгебра, $\Gamma(A)$ --- ее минимальный детерминированный граф. Тогда следующие уловия эквивалентны:
\begin{enumerate}
\item $\Gamma(A)$ не имеет дважды циклических вершин;
\item алгебра $A$ имеет степенной рост;
\item алгебра $A$ имеет не экспоненциальный рост;
\item алгебра $A$ представима матрицами над полем;
\item в $A$ выполняется полиномиальное тождество.
\end{enumerate}
Литература: \cite{BBL97, Sap14, Ufn89, wMA, wDS}.
\end{enumerate}

\newpage
\addcontentsline{toc}{chapter}{\numberline {}Предметный указатель}
\input{final.ind}

\newpage

\chapter*{Список литературы}

\addcontentsline{toc}{chapter}{\numberline {}Список литературы}

\markboth{}{Список литературы}

\begin{enumerate}

\bibitem{Bur1902} W.~Burnside. {\it On an unsettled question in the theory of discontinuous groups.}
Quart. J. Math., 33 (1902), 230--238.

\bibitem{Kur41} А.~Г.~Курош.
{\it Проблемы теории колец, связанные с проблемой Б\"{e}рнсайда о периодических группах.}
Изв. АН СССР, Сер. Матем., 5(1941), 233--240.

\bibitem{Kap46} I.~Kaplansky. {\it On a problem of Kurosch and Jacobson.} Bull. AMer. Math. Soc., 52(1946), 496--500.

\bibitem{Lev46} J.~Levitzki. {\it On a problem of A. Kurosch.} Bull. AMer. Math. Soc., 52(1946), 1033--1035.

\bibitem{Dil50} R.~P.~Dilworth. {\it A Decomposition Theorem for Partially Ordered Sets.} Annals of Mathematics, 51 (1), 1950, 161--166.

\bibitem{Sh53} А.~И.~Ширшов. {\it Подалгебры свободных лиевых алгебр.} Матем. сб., {\bf 33(75)}:2 (1953), 441--452.

\bibitem{Sh54} А.~И.~Ширшов. {\it Подалгебры свободных коммутативных и свободных антикоммутативных алгебр.} Матем. сб., {\bf 34(76)}:1 (1954), 81--88.

\bibitem{Sh57_1} А.~И.~Ширшов. {\it О кольцах с тождественными соотношениями.} Матем. сб., 43(85):2 (1957), 277--283.

\bibitem{Sh57_2} А.~И.~Ширшов. {\it О некоторых неассоциативных ниль-кольцах и алгебраических алгебрах.} Матем. сб., 41(83):3 (1957), 381--394.

\bibitem{Sh58}
А.~И.~Ширшов. {\it О свободных адгебрах Ли.} Мат. сб.,
   1958, Т. 45(87), Ном.~2, 113--122.

\bibitem{Sh62(1)} А.~И.~Ширшов. {\it Некоторые алгоритмические проблемы для $\epsilon$-алгебр.} Сиб. матем. ж., 3, 1 (1962), 132--137.

\bibitem{Sh62(2)} А.~И.~Ширшов. {\it Некоторые алгоритмические проблемы для алгебр Ли.} Сиб. матем. ж., 3, 2 (1962), 292--296.

\bibitem{BBL97} A.~J.~Belov, V.~V.~Borisenko, V.~N.~Latysev. {\it Monomial Algebras.} NY. Plenum, 1997.

\bibitem{Zelmanov}
  E.~Zelmanov.
  {\it On the nilpotency of nilalgebras.}
  Lect. Notes Math.,
  1988, vol 1352, pages 227--240.

\bibitem{Kem09} A.~R.~Kemer. {\it Comments on the Shirshov's Height Theorem.}
    in book: selected papers of A.I.Shirshov, Birkh\"user Verlag AG
    (2009), 41--48.

\bibitem{BelovRowenShirshov}
 A.~Kanel-Belov, L.~H.~Rowen.
   {\it Perspectives on Shirshov's Height Theorem.\/}
    in book: selected papers of A.I.Shirshov, Birkh\"user Verlag AG
    (2009), 3--20.

\bibitem{Ufn90}
В.~А.~Уфнаровский. {\it Комбинаторные и асимптотические методы в алгебре.}
Итоги науки и техн., Соврем. пробл. мат. Фундам. направления, 1990, No 57, 5--177.

\bibitem{Dr04}
V.~Drensky, E.~Formanek. {\it Polynomial identity ring.}
Advanced Courses in Mathematics. CRM Barcelona., Birkhauser Verlag, Basel, 2004.

\bibitem{Pchelintcev}
С.~В.~Пчелинцев. {\it Теорема о высоте для альтернативных алгебр.}
Мат. сб., 1984, т. 124, No 4, 557--567.

\bibitem{Mishenko1}
С.~П.~Мищенко. {\it Вариант теоремы о высоте для алгебр Ли.} Мат.
заметки, 1990, т. 47, No 4, 83--89.

\bibitem{Belov1}
А.~Я.~Белов. {\it О базисе Ширшова относительно свободных алгебр
сложности $n$.}
 Мат. сб., 1988, т. 135, No 31, 373--384.

\bibitem{BR05} A.~Kanel-Belov, L.~H.~Rowen. {\it Computational aspects of polynomial identities.}
Research Notes in Mathematics 9. AK Peters, Ltd., Wellesley, MA,
2005.

\bibitem{Cio97} Gh.~Ciocanu. {\it Independence and quasiregularity in algebras. II.}
Izv. Akad. Nauk Respub. Moldova Mat., 1997, No. 70, pages 70--77, 132, 134.

\bibitem{Cio88} Gh.~Ciocanu. {\it Local finiteness of algebras.}
Mat. Issled., 1988, No. 105, Moduli, Algebry, Topol., pages 153--171, 198.

\bibitem{CK93} Gh.~Ciocanu, E.~P.~Kozhukhar. {\it Independence and nilpotency in algebras. (Russian. English, Russian, Moldavian
summaries.)}
Izv. Akad. Nauk Respub. Moldova Mat., 1993, No 2, pages 51--62, 92--93, 95.

\bibitem{Che94} Gh.~Ciocanu. {\it Independence and quasiregularity in algebras}
Dokl. Akad. Nauk, 1994, 337:3.

\bibitem{Ufn85} В.~А.~Уфнаровский. {\it Теорема о независимости и ее следствия.}
Матем. сб., 1985, 128(170):1(9), 124--132.

\bibitem{UC85} V.~A.~Ufnarovskii, Gh.~Ciocanu. {\it Nilpotent matrices.}
Mat. Issled.,1985, No 85, Algebry, Koltsa i Topologii, pages 130--141, 155

\bibitem{Belov501}
А.~Я.~Белов. {\it О рациональности рядов Гильберта относительно
свободных алгебр}. Успехи мат. наук, 1997, т. 52, No 2, 153--154.

\bibitem{BP07} J.~Berstel, D.~Perrin. {\it The origins of combinatorics on words.} European Journal of Combinatorics 28 (2007) 996--1022.

\bibitem{Lot83} M.~Lothaire. {\it Combinatorics of words.} Cambridge mathematical library, 1983.

\bibitem{LatyshevMulty}
В.~Н.~Латышев. {\it Комбинаторные порождающие полилинейных
полиномиальных тождеств.} Фундамент. и прикл. матем., 12:2 (2006), 101--110.

\bibitem{Kolotov}
А.~Г.~Колотов. {\it  О верхней оценке высоты в конечно порожденных
алгебрах с тождествами.} Сиб. мат. ж., 1982, т. 23, N 1, 187--189.

\bibitem{Dnestrovsk} {\it Днестровская тетрадь: оперативно-информац. сборник.} 4-е
  изд., Новосибирск: изд. ин--та матем. СО АН СССР, 1993, 73 стр.

\bibitem{Bel92} A.~Ya.~Belov. {\it Some estimations for nilpotency of nil-algebras over a field
of an arbitrary characteristic and height theorem}, Commun. Algebra
20 (1992), no. 10, 2919--2922.

\bibitem{Dr00} V.~Drensky. {\it Free Algebras and PI-algebras: Graduate Course in Algebra.}
Springer-Verlag, Singapore (2000).

\bibitem{Kh11(2)} М.~И.~Харитонов. {\it Оценки на структуру кусочной периодичности в теореме Ширшова о высоте.} Вестник Московского университета, Серия 1, Математика. Механика. 1(2013), 10--16.

\bibitem{Kh14} \textcolor{green}{М.~И.~Харитонов.{\it Оценки на количество перестановочно упорядоченных множеств.} Вестник Московского университета, Серия 1, Математика. Механика. В печати.}

\bibitem{Klein} A. A. Klein.
{\it Indices of nilpotency in a $PI$-ring.} Archiv der Mathematik,
1985, Volume 44, Number 4, pages 323--329.

\bibitem{Klein1} A. A. Klein.
{\it Bounds for indices of nilpotency and nility.} Archiv der
Mathematik, 2000, Volume 74, Number 1, pages 6--10.

\bibitem{Bog01} И.~И.~Богданов. {\it Теорема Нагаты-Хигмана для
полуколец}. Фундамент. и прикл. матем., 7:3 (2001),  651--658.

\bibitem{Procesi}
C.~Procesi. {\it Rings with polynomial identities.} N.Y., 1973, 189 pages.

\bibitem{Bel04} А.~Я.~Белов. {\it Размерность Гельфанда--Кириллова относительно
свободных ассоциативных алгебр.} Матем. сб., 195:12 (2004), 3--26.

\bibitem{Kuz75} Е.~Н.~Кузьмин. {\it О теореме Нагаты-Хигмана.}
В сб. Трудов посвященный 60-летию акад. Илиева. София, 1975, 101--107.

\bibitem{Razmyslov3}
Ю.~П.~Размыслов. {\it Тождества алгебр и их представлений.}
  М.: Наука, 1989, 432 стр.

\bibitem{SGS78} К.~А.~Жевлаков, А.~М.~Слинько, И.~П.~Шестаков и А.~И.~Ширшов. {\it Кольца, близкие к ассоциативным, первое издание.} Современная алгебра, Москва (1978).

\bibitem{Bel07} А.~Я.~Белов. {\it Проблемы бернсайдовского типа, теоремы о высоте и
о независимости}. Фундамент. и прикл. матем., 13:5 (2007),
19--79.


\bibitem{Lop11}  A.~A.~Lopatin. {\it
On the nilpotency degree of the algebra with
identity $x^n = 0$.} Journal of Algebra, 371(2012), 350--366.

\bibitem{Ch01} Е.~С.~Чибриков. {\it О высоте Ширшова конечнопорожд\"{е}нной ассоциативной алгебры, удовлетворяющей тождеству степени четыре.} Известия Алтайского государственного университета, 1(19), 2001 г., 52--56.

\bibitem{02} {\it Algebraic Combinatorics on Words.} Cambridge mathematical press, 2002.

\bibitem{Kh11} М.~И.~Харитонов. {\it Двусторонние оценки существенной высоты в теореме Ширшова о высоте.} Вестник Московского университета, Серия 1, Математика. Механика. 2(2012), 20--24.

\bibitem{BK}
А. Я. Белов, М. И. Харитонов {\it Субэкспоненциальные оценки в теореме Ширшова о высоте.} Мат. сб., 4(2012), 81--102 (see also arXiv: 1101.4909).

\bibitem{Bel99}
А.~Я.~Белов. {\it О нешпехтовых многообразиях.} Фундамент. и прикл. матем., 5:1 (1999), 47--66.
\bibitem{BK12}
А.~Я.~Белов, М.~И.~Харитонов. {\it Оценки высоты в смысле Ширшова и на количество фрагментов малого периода.} Фундамент. и прикл. матем., 17:5 (2012), 21--54.
\bibitem{Ch07}
Г.~Р.~Челноков. {\it О нижней оценке количества k+1-разбиваемых перестановок.}
Модел. и анализ информ. систем, Т. 14, 4(2007), 53--56.

\bibitem{Ges90}
I.~M.~Gessel. {\it Symmetric Functions and P-Recursiveness.} J. Combin. Theory Ser. A 53, 1990, 257--285.
\bibitem{Gr99}
А.~В.~Гришин. {\it Примеры не конечной базируемости Т-пространств и Т-идеалов в характеристике 2.} Фундамент. и прикл. матем., 5:1 (1999),  101--118.
\bibitem{Kem87}
А.~Р.~Кемер. {\it Конечная базируемость тождеств ассоциативных алгебр.} Алгебра и логика, том 26, выпуск 5, 1987, 597--641.
\bibitem{Kn70}
D.~E.~Knuth. {\it Permutations, matrices, and generalized Young tableux.}
Pacific journal of mathematics, Vol. 34, No. 3, 1970, 709--727.

\bibitem{Lat72}
В.~Н.~Латышев. {\it К теореме Регева о тождествах тензорного произведения PI-алгебр.} УМН, Т.27, Выпуск 4(166), 1972, 213--214.
\bibitem{Lat77}
В.~Н.~Латышев. {\it Нематричные многообразия нематричных алгебр.} М., Изд-во Моск. ун-та, 1977.
\bibitem{Sch61}
 C.~Schensted. {\it Longest increasing and decreasing subsequences.} Canad. J. Math 13, 1961, 179--191.
\bibitem{Shch99}
В.~В.~Щиголев. {\it Примеры бесконечно базируемых T-идеалов.} Фундамент. и прикл. матем., 5:1 (1999),  307--312.
\bibitem{Sp50}
W.~Specht. {\it Gesetze in Ringen. I.} Math. Z., 52:557--589, 1950.
\bibitem{Sap14} 
M.~V.~Sapir. {\it Combinatorial algebra: syntax and semantics.} Springer, 2014.
\bibitem{AL50} S.~A.~Amitsur, J.~Levitzki. {\it Minimal identities for algebras.} Proc. Amer. Math. Soc. (2), 1950, 449--463.
\bibitem{FI13} F.~Li, I.~Tzameret. {\it Matrix dentities and proof complexity lower bounds.} 2013.
\bibitem{LS13} A.~A.~Lopatin, I.~P.~Shestakov. {\it Associative nil-algebras over finite fields.} International Journal of Algebra and Computation, Vol.~23, No.~8(2013), 1881--1894.
\bibitem{Ufn89}
В.~А.~Уфнаровский. {\it Комбинаторные и ассимптотические методы в алгебре.} ВИНИТИ, 1989.

\bibitem{Bug06}
В.~О.~Бугаенко. {\it Обобщ\"{е}нная теорема Ван дер Вардена.} Москва, МЦНМО, 2006.

\bibitem{Hin79}
А.~Я.~Хинчин. {\it Три жемчужины теории чисел.} Москва, Наука, 1979.
\bibitem{Kl09}
А.~А.~Клячко. {\it Спецкурс по теории групп.} 2009.

\bibitem{Zim82}
А.~И.~Зимин. {\it Блокирующие множества термов.} 
Мат. сб.,
   1982, Т.~119(161), Ном.~3(11), 363--375.

\bibitem{Reg71}
A.~Regev. {\it Existence of polinomial identities in $A\otimes_F B$.} Bull. Amer. Math. Soc.
77:6 (1971), 1067--1069.

\bibitem{Fr11}
А.~Э.~Фрид. {\it Введение в комбинаторику слов.} Лекции, 2011.

\bibitem{Ber78} G.~M.~Bergman. {\it The Diamond Lemma for Ring Theory}. Advances in mathematics, 29, 178--218 (1978).

\bibitem{BMM95} K.~I.~Beidar, W.~S.~Martindale III, A.~V.~Mikhalev. {\it Rings with generalized identities}. Pure and applied mathematics, 1995.
\bibitem{Lat12} В.~Н.~Латышев. {\it ЕНС Прикладные проблемы алгебры}. 2012.
\bibitem{Kap48} I.~Kaplansky. {\it Rings with a polynomial identity.} Bull. Amer. Math. Soc., 54:575--580, 1948.
\bibitem{Reg71} A.~Regev. {\it Existence of polinomial identities in $A\otimes_F B$.} Bull. Amer. Math. Soc., 77:6
(1971), 1067--1069.
\bibitem{Th1906} A. Thue. {\it \"{U}ber unendliche Zeichenreihen.} Norske Vid. Selsk. Skr., I. Mat. Nat. Kl.,
Christiana, 7:1--22, 1906.
\bibitem{Mor21} M. Morse. {\it Recurrent Geodesics on a Surface of Negative Curvature.} Trans. Amer. Math. Soc. 22, 84--100, 1921.
\bibitem{Ber77_1} J. Berstel. {\it Mots sans carr\'{e} et morphismes it\'{e}r\'{e}s.} Discrete Math., 29:235--244, 1979.
\bibitem{Ber77_2} J. Berstel. {\it Sur les mots sans carr\'{e} d\'{e}finis par un morphisme.} In A. Maurer, editor, ICALP,
16--25, Springer-Verlag, 1979.
\bibitem{Cr82} M. Crochemore. {\it Sharp characterizations of square-free morphisms.} Theoret. Comput.
Sci., 18:221--226, 1982.
\bibitem{wiki1}
wiki:en. {\it Rauzy fractal.}

\bibitem{wMA} wiki:ru. {\it Минимальная форма автомата.}

\bibitem{wDS} wiki:ru. {\it Диаграмма состояний (теория автоматов).}

\end{enumerate}

\pagestyle{headings}

\end{document}